\newcommand{\C}{\mathbb C}
\newcommand{\F}{\mathbb F}
\newcommand{\Q}{\mathbb Q}
\newcommand{\Z}{\mathbb Z}
\newtheorem{thm}{Theorem}[section]
\newtheorem{Lemma}{Lemma}[section]
\newtheorem{Definition}{Definition}[section]
\newtheorem{Proposition}{Proposition}[section]
\newtheorem{Remark}{Remark}[section]
\newtheorem{Example}{Example}[section]
\newtheorem{Corollary}{Corollary}[section]
\begin{document}
\title{More characterizations of generalized bent function in odd characteristic, their dual and the gray image}

\author{Libo Wang$^a$,  Baofeng Wu$^b$, Zhuojun Liu$^a$}
\date{{\small $^a$ KLMM, Academy of Mathematics and Systems Science, Chinese Academy of Sciences,
Beijing 100190,  China}\\
{\small $^b$ SKLOIS, Institute of Information Engineering, Chinese Academy of Sciences, Beijing 100093, China}
}

\maketitle
\normalsize
\begin{abstract}
  In this paper, we further investigate properties of generalized bent Boolean functions from $\Z_{p}^n$ to $\Z_{p^k}$,
  where $p$ is an odd prime and $k$ is a positive integer. For various kinds of representations,  sufficient and necessary conditions for bent-ness of such functions are given in terms of their various kinds of component functions. Furthermore, a subclass of gbent functions corresponding  to relative difference sets, which
  we call $\Z_{p^k}$-bent functions, are studied. It turns out that $\Z_{p^k}$-bent functions correspond
  to a class of vectorial bent functions, and  the property of being $\Z_{p^k}$-bent is much stronger then the standard bent-ness.  The dual and the generalized Gray  image of gbent function are also discussed. In addition, as a further generalization, we also define and give characterizations of gbent functions from $\Z_{p^l}^n$ to $\Z_{p^k}$ for a positive integer $l$ with $l<k$.

\textbf{Key words:} generalized bent (gbent) functions, Walsh-Hadamard transform, cyclotomic fields, relative difference set, generalized Gray map.
\end{abstract}

\section{Introduction}
\label{sec1}

Throughout this paper, let $\Z_{p^t}$  be the ring of integer modulo $p^t$ and $\Z_{p^t}^n$ be a free module over $\Z_{p^t}$ with rank $n$, where $p$ is a prime and $t$ and $n$ are positive integers. If $\mathbf{x}=(x_1,\dots,x_n)$ and $\mathbf{y}=(y_1,\dots,y_n)$ are two
elements of $\Z_{p^t}^n$, we define their  inner product by $\mathbf{x}\cdot\mathbf{y} = \sum_{i=1}^{n}x_iy_i$ (mod $p^t$) (without cause of confusion,
we always omit ``mod $p^t$" in the sequel). For a complex number $z=a+b\sqrt{-1}$, the absolute  value of
$z$ is $|z|=\sqrt{a^2+b^2}$ and $\bar{z}=a-b\sqrt{-1}$ is the complex conjugate of $z$, where $a$ and $b$ are real numbers.

A function from $\Z_p^n$ to $\Z_{p^k}$ is called a generalized  Boolean function on $n$ variables, the set formed by which is denoted
by $\mathcal{GB}_n^{p^k}$. Especially, $\mathcal{GB}_n^{p^k}$ denotes the set of all classical $p$-ary Boolean functions when $k=1$.
For  a function $f\in\mathcal{GB}_n^{p^k}$, the generalized Walsh-Hadamard transform, which is
a function $\mathcal{H}_f: \Z_p^n \rightarrow \C$, can be defined by
\begin{equation}
\label{1.01}
\mathcal{H}_f(\mathbf{u})= p^{-\frac{n}{2}} \sum \limits_{\mathbf{x} \in \Z_p^n} \zeta_p^{-\mathbf{u}\cdot\mathbf{x}} \zeta_{p^k}^{f(\mathbf{x})},
\end{equation}
for any $\mathbf{u} \in \Z_p^n$, where  $\zeta_p=e^{\frac{2 \pi \sqrt{-1}}{p}}$ and
$\zeta_{p^k}=e^{\frac{2 \pi \sqrt{-1}}{p^k}}$ represent the complex $p$-th and $p^k$-th primitive  roots of unity, respectively.
The inverse generalized Walsh-Hadamard transform of $f$ is
\begin{equation}
\label{1.02}
\zeta_{p^k}^{f(\mathbf{x})}= p^{-\frac{n}{2}} \sum \limits_{\mathbf{u} \in \Z_p^n} \zeta_p^{\mathbf{u}\cdot\mathbf{x}} \mathcal{H}_f(\mathbf{u}).
\end{equation}
We call the function $f$   gbent  if $|\mathcal{H}_f(\mathbf{u})| =1$ for all $\mathbf{u} \in \Z_{p}^n$.
A gbent function $f$ is regular if there exists some generalized  Boolean  function $f^\ast$ satisfying
$\mathcal{H}_f(\mathbf{u})= \zeta_{p^k}^{f^\ast(\mathbf{u})}$ for any $\mathbf{u} \in \Z_p^n$.
A gbent function $f$ is called weakly regular if there exists some generalized  Boolean  function $f^\ast$  and a complex $\alpha$
with unit magnitude satisfying $\mathcal{H}_f(\mathbf{u})=\alpha \zeta_{p^k}^{f^\ast(\mathbf{u})}$ for any $\mathbf{u} \in \Z_p^n$.
Such a function $f^\ast$ is called the dual of $f$. From the inverse generalized Walsh-Hadamard transform, it is easy to see that
the dual $f^\ast$ of a regular (weakly regular)  gbent function $f$ is also regular (weakly regular) gbent.

Currently there is a lot of research  regarding  constructions and analysis of gbent functions both in even and odd characteristic;
see for instance \cite{Hodzic,Hodzic1,liuhaiying,Martinsen1,Martinsen2,Martinsen,Schmidt,Stanica,Tang,Wang}.
In \cite{Schmidt}, Schmidt proposed the gbent  functions from $\Z_2^n$ to $\Z_4$, which can be used to constant amplitude codes
and $\Z_4$-linear codes for CDMA communications. Later, gbent  functions from $\Z_2^n$ to $\Z_8$ and $\Z_{16}$ were studied in
\cite{Stanica} and \cite{Martinsen1}, respectively.  Recently, a generalization of bent functions from $\Z_{2}^n$ to $\Z_q$, where $q \geq 2$
is any positive even integer, have attracted  much more attention.
Existence, characterizations and constructions of them were studied by several authors \cite{Hodzic,liuhaiying,Martinsen,Tang}. A generalization to the odd characteristic case were also given by the authors \cite{Wang} in a recent work, and gbent functions from  $\Z_p^n$ to $\Z_{p^k}$ for an odd prime $p$ were studied.

In this paper, we further investigate properties of gbent functions from $\Z_{p}^n$ to $\Z_{p^k}$ in the odd characteristic case. Firstly, for various kinds of representations,  sufficient and necessary conditions for gbent functions are given in terms of their various kinds of component functions. Secondly, we emphasize that a gbent function conceptually does not correspond to a bent function,
since in the definition of generalized Walsh-Hadamard transform not all characters of $\Z_p^n \times \Z_{p^k}$ are considered. Thus, in general,
a gbent function does not give rise to a relative difference set. For this reason we extend the definition and introduce the term of $\Z_{p^k}$-bent
function. We call a function $f\in \mathcal{GB}_n^{p^k}$ is $\Z_{p^k}$-bent if
\begin{equation*}
\mathcal{H}_f^{(p^k)}(a,\mathbf{u})= p^{-\frac{n}{2}} \sum\limits_{\mathbf{x} \in \Z_{p}^n} \zeta_{p^k}^{af(\mathbf{x})}  \zeta_p^{-\mathbf{u} \cdot \mathbf{x}}
\end{equation*}
has absolute value $1$ for all $\mathbf{u} \in \Z_p^n$ and all nonzero $a \in \Z_{p^k}$. The property of being $\Z_{p^k}$-bent
is much stronger then the standard concept of bent-ness. Therefore,  $\Z_{p^k}$-bent functions seem not  easy to obtain. However, we give a construction using partial spreads.

In addition,  from the result of \cite{Wang}, we know that for gbent function $f \in \mathcal{GB}_n^{p^k}$,  there exists a function $f^{\ast}: \Z_p^{n}\rightarrow \Z_{p^k}$ such that
\begin{eqnarray*}
 \mathcal{H}_f(\mathbf{u})=\left\{
\begin{array}{l}
\pm \zeta_{p^k}^{f^{\ast}(\mathbf{u})}  ~~~~~~~~~~~~$if$~~$n$~~is~~even~~or~~$n$~~is ~~odd~~and~~p \equiv 1(mod~4),\\
\pm \sqrt{-1} \zeta_{p^k}^{f^{\ast}(\mathbf{u})} ~~~~ $if$~~ $n$~~is ~~odd~~and~~p \equiv 3(mod~4).
\end{array}
\right.
\end{eqnarray*}
Similar as in \cite{Cesmelioglu1},  we also call this $f^{\ast}$ the dual of the gbent function $f$. Note that this definition is much more formal, since $f$ may not be (weakly) regular (this is because $+$ and $-$ may appear in $\mathcal{H}_f(\mathbf{u})$ for different $\mathbf{u}$). We emphasis that when $f$ is non-(weakly) regular, $f^{\ast}$ may not be a gbent function.  Anyway, we can explicitly determine $f^*$ in terms of the dual of component functions of $f$.

Furthermore, every function in $ \mathcal{GB}_n^{p^k}$ have a so-called generalized Gray image, which is a function in $ \mathcal{GB}_{n+k-1}^{p}$. We show that  the generalized Gray image of a gbent function in $\mathcal{GB}_n^{p^k}$ is a
 $(k-1)$-plateaued function, where $(k-1)$-plateaued functions in odd characteristic are given in Definition \ref{Definition5.2.1}.

 At last,  we also further generalize our study of gbent functions in $ \mathcal{GB}_n^{p^k}$ to gbent functions from $\Z_{p^l}^n$ to $\Z_{p^k}$ for a positive integer $l$ with $l<k$, giving their definition  and  characterizations.

The rest of this paper is organized as follows, In Section \ref{sec2} we give some preliminary results which will be used later.
Some different descriptions for a function in $\mathcal{GB}_n^{p^k}$ to be gbent is given in Section \ref{sec3.1}.
$\Z_{p^k}$-bent functions and their relationships  to
relative difference sets in $\Z_p^n \times \Z_{p^k}$ are introduced  in Section \ref{sec4}. In Section \ref{sec5} we specify
the dual and generalized Gray map of gbent functions in $\mathcal{GB}_n^{p^k}$. Finally, gbent functions from $\Z_{p^l}^n$ to $\Z_{p^k}$ are introduced and characterized
in Section \ref{sec3.2}. Conclusions are given in Section \ref{secconclu}.

\section{Preliminaries}
\label{sec2}

In this section we will give some results on cyclotomic fields, which will be used in the following sections.
Firstly, we state some basic facts  on the cyclotomic fields $K=\Q(\zeta_{p^k})$, which can be found in any book on algebraic number theory,
for example \cite{Washington}.

Let $\mathcal{O}_K$ be the ring of integers of $K=\Q(\zeta_{p^k})$. It is well known that $\mathcal{O}_K=\Z[\zeta_{p^k}]$. Any nonzero ideal $A$
of $\mathcal{O}_K$ can be uniquely (up to the order) expressed as
\[A=P_1^{a_1} \cdots P_s^{a_s},\]
where $P_1, \cdots, P_s$ are distinct prime ideals of $\mathcal{O}_K$ and $a_i \geq 1$, for $1\leq i \leq s$. In other words, the set $S(K)$ of
all the nonzero ideals of $\mathcal{O}_K$ is a free multiplicative  communicative semigroup with a basis $B(K)$, the set of all nonzero  prime ideals
of $\mathcal{O}_K$. Such semigroup $S(K)$ can be extended to the commutative group $I(K)$, called the group of fractional ideals of $K$. Each element of $I(K)$,
called a fractional ideals, has the form $AB^{-1}$, where $A,B$ are ideals of $\mathcal{O}_K$. For each $\alpha \in K^{\ast}=K \setminus \{0\}$, $\alpha \mathcal{O}_K$
is a fractional ideals, called a principle fractional ideals, and we have $(\alpha \mathcal{O}_K)(\beta \mathcal{O}_K)=\alpha \beta \mathcal{O}_K$,
$(\alpha \mathcal{O}_K)^{-1}=(\alpha^{-1}) \mathcal{O}_K$. Therefore, the set $P(K)$ of all principle fractional ideals is a subgroup of $I(K)$.
Some results on $K$ are given in the following lemmas.

\begin{Lemma}
\label{Lemma2.1}
Let $k\geq 2$ and $K=\Q(\zeta_{p^k})$, then

$(i)$ The field extension $K/\Q$ is Galois of degree $(p-1)p^{k-1}$ and the Galois group  $Gal(K/ \Q)=\{ \sigma_j | ~ j \in \Z, (j,p)=1 \}$,
where the automorphism $\sigma_j$ of $K$ is defined by $\zeta_{p^k} \mapsto \zeta_{p^k}^j$.

$(ii)$  The ring of integers in $K$ is $\mathcal{O}_K = \Z [\zeta_{p^k}]$ and $\{\zeta_{p^k}^j|~0 \leq j \leq (p-1)p^{k-1}-1\}$ is an integral basis of $\mathcal{O}_K$.
The group of roots of unity in $\mathcal{O}_K$ is $W_K=\{\zeta_{{2p}^k}^j|~0 \leq j \leq 2 p^k-1\}$.

$(iii)$  The principle ideal $(1-\zeta_{p^k}) \mathcal{O}_K$ is a prime ideal of $\mathcal{O}_K$ and the rational prime $p$ is totally ramified in $\mathcal{O}_K$,
i.e., $p \mathcal{O}_K=( (1-\zeta_{p^k})\mathcal{O}_K )^{(p-1)p^{k-1}}$.
\end{Lemma}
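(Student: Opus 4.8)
The plan is to establish the three parts in turn; these are classical facts about the cyclotomic field $K=\Q(\zeta_{p^k})$, and the whole argument rests on the irreducibility over $\Q$ of the cyclotomic polynomial $\Phi_{p^k}(x)=\frac{x^{p^k}-1}{x^{p^{k-1}}-1}$. First I would prove this irreducibility via Eisenstein's criterion at $p$ applied to $\Phi_{p^k}(x+1)$, checking that the latter is Eisenstein at $p$ using the congruence $\Phi_{p^k}(x+1)\equiv x^{(p-1)p^{k-1}}\pmod p$ and that its constant term $\Phi_{p^k}(1)=p$ is not divisible by $p^2$. Irreducibility immediately gives $[K:\Q]=\deg\Phi_{p^k}=(p-1)p^{k-1}$. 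Since $K$ is the splitting field of $x^{p^k}-1$ over $\Q$, it is Galois, and every automorphism must send the primitive root $\zeta_{p^k}$ to another primitive root $\zeta_{p^k}^j$ with $\gcd(j,p)=1$; conversely each such $j$ determines an automorphism $\sigma_j$, giving $\mathrm{Gal}(K/\Q)=\{\sigma_j : (j,p)=1\}\cong(\Z/p^k\Z)^\ast$ and settling $(i)$.

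For $(iii)$ I would work with the element $\lambda=1-\zeta_{p^k}$ directly, without yet assuming the shape of $\mathcal{O}_K$. Factoring $\Phi_{p^k}(x)=\prod_{(j,p)=1}(x-\zeta_{p^k}^j)$ and evaluating at $x=1$ yields $p=\prod_{(j,p)=1}(1-\zeta_{p^k}^j)$. Each factor is an associate of $\lambda$, since $\frac{1-\zeta_{p^k}^j}{1-\zeta_{p^k}}=1+\zeta_{p^k}+\cdots+\zeta_{p^k}^{j-1}$ is an algebraic integer whose inverse, obtained symmetrically by choosing $j'$ with $jj'\equiv 1\pmod{p^k}$, is also an algebraic integer; hence the ratio is a unit. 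Therefore $p=u\lambda^{(p-1)p^{k-1}}$ for some unit $u$, which at the level of ideals reads $p\mathcal{O}_K=(\lambda\mathcal{O}_K)^{(p-1)p^{k-1}}$. Because the ramification index here already equals the full degree $[K:\Q]$, the fundamental identity $\sum_i e_if_i=[K:\Q]$ forces a single prime above $p$ with residue degree $1$; that prime is $\lambda\mathcal{O}_K$, which is therefore prime, and $p$ is totally ramified.

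Part $(ii)$ is where the real work lies. The containment $\Z[\zeta_{p^k}]\subseteq\mathcal{O}_K$ is clear, and $\{\zeta_{p^k}^j : 0\le j\le (p-1)p^{k-1}-1\}$ is a $\Z$-basis of $\Z[\zeta_{p^k}]$; the issue is proving equality of the two rings. The hard part will be bounding the index $[\mathcal{O}_K:\Z[\zeta_{p^k}]]$: I would compute the discriminant of the power basis and show that $\mathrm{disc}(\Z[\zeta_{p^k}])$ is, up to sign, a power of $p$, so the index is a power of $p$; then one shows $p$ does not divide the index. The cleanest route for the latter is a local argument using the ramification from $(iii)$: since $p\mathcal{O}_K=(\lambda)^{[K:\Q]}$ is totally ramified with $\mathcal{O}_K/(\lambda)\cong\F_p$ and $\lambda\in\Z[\zeta_{p^k}]$, a standard comparison of differents (or a Nakayama-type localization at $p$) shows $\Z[\zeta_{p^k}]$ is maximal at $p$ and hence equals $\mathcal{O}_K$. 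Finally, for the roots of unity $W_K$: as $p$ is odd, $\gcd(2,p^k)=1$, so $K$ contains $-1$ and $\zeta_{p^k}$, hence $\zeta_{2p^k}$, giving $\{\zeta_{2p^k}^j\}\subseteq W_K$; conversely any $\zeta_m\in K$ forces $\Q(\zeta_m)\subseteq K$, so $\varphi(m)\mid (p-1)p^{k-1}$, and since $K$ ramifies only at $p$ this pins down $m\mid 2p^k$, yielding $W_K=\{\zeta_{2p^k}^j : 0\le j\le 2p^k-1\}$. I expect the discriminant and index computation in $(ii)$ to be the main obstacle, since everything else reduces to the irreducibility of $\Phi_{p^k}$ and the explicit factorization of $p$ established in $(iii)$.
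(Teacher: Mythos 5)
Your proposal is correct, but it is worth noting that the paper itself does not prove this lemma at all: Lemma \ref{Lemma2.1} is presented as a collection of classical facts, with the reader referred to standard texts on algebraic number theory (the citation is to Washington's \emph{Introduction to Cyclotomic Fields}). What you have written is a faithful reconstruction of the standard textbook proofs: Eisenstein on $\Phi_{p^k}(x+1)$ for $(i)$; the factorization $p=\prod_{(j,p)=1}(1-\zeta_{p^k}^j)$ into associates of $\lambda=1-\zeta_{p^k}$, combined with $\sum_i e_if_i=[K:\Q]$, for $(iii)$; and the discriminant-plus-local-maximality argument for $(ii)$, with the ramification argument pinning down $W_K$. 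Your ordering (proving $(iii)$ before $(ii)$ and using it there) is also the standard and correct one. Two small points would need tightening in a full write-up: in $(iii)$, the phrase ``the ramification index already equals the full degree'' is premature before $(\lambda)$ is known to be prime — the clean version writes $(\lambda)=\prod P_i^{a_i}$, deduces $e_i=na_i$ from $(\lambda)^n=p\mathcal{O}_K$, and then gets $\sum_i a_if_i=1$ from the fundamental identity; and in the $W_K$ argument, the claim that $K$ ramifies only at $p$ requires knowing $\mathrm{disc}(K)$ is a power of $p$, which is only available after $(ii)$ is complete, so that step must come last. Neither is a gap in the ideas, only in the bookkeeping.
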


\begin{Lemma}
[see \cite{Kumar}]
\label{Lemma2.2}
For a positive integer $q$,

$(i)$  If $q\equiv 0,1~~(mod~4)$, then $\sqrt{q} \in \Q(\zeta_q)$,

$(ii)$  If $q\equiv 2,3~~(mod~4)$, then $\sqrt{q} \in \Q(\zeta_{4q}) \backslash \Q(\zeta_{2q})$.
\end{Lemma}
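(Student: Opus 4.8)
The plan is to reduce everything to the classical quadratic Gauss sum. For a positive integer $q$ set $G(q)=\sum_{a=0}^{q-1}\zeta_q^{a^2}$, which manifestly lies in $\Q(\zeta_q)$, and recall Gauss's evaluation
\begin{equation*}
G(q)=\begin{cases} \sqrt{q} & q\equiv 1\ (\mathrm{mod}\ 4),\\ \sqrt{-1}\,\sqrt{q} & q\equiv 3\ (\mathrm{mod}\ 4),\\ 0 & q\equiv 2\ (\mathrm{mod}\ 4),\\ (1+\sqrt{-1})\sqrt{q} & q\equiv 0\ (\mathrm{mod}\ 4).\end{cases}
\end{equation*}
Alongside this I would collect a few standard facts about cyclotomic fields: that $\Q(\zeta_{2m})=\Q(\zeta_m)$ for odd $m$; that $\sqrt{-1}=\zeta_4\in\Q(\zeta_n)$ if and only if $4\mid n$; that $\sqrt 2=\zeta_8+\zeta_8^{-1}\in\Q(\zeta_8)$; and the intersection formula $\Q(\zeta_a)\cap\Q(\zeta_b)=\Q(\zeta_{\gcd(a,b)})$ together with the companion $\Q(\zeta_a)\Q(\zeta_b)=\Q(\zeta_{\mathrm{lcm}(a,b)})$.

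The membership statements then fall out directly. For part $(i)$: if $q\equiv 1\ (\mathrm{mod}\ 4)$ then $\sqrt q=G(q)\in\Q(\zeta_q)$; if $q\equiv 0\ (\mathrm{mod}\ 4)$ then $4\mid q$ gives $\sqrt{-1}\in\Q(\zeta_q)$, so dividing $G(q)=(1+\sqrt{-1})\sqrt q$ by the nonzero element $1+\sqrt{-1}$ yields $\sqrt q\in\Q(\zeta_q)$. For the membership half of part $(ii)$: if $q\equiv 3\ (\mathrm{mod}\ 4)$ then $\sqrt q=G(q)/\sqrt{-1}\in\Q(\zeta_q,\zeta_4)=\Q(\zeta_{4q})$; if $q\equiv 2\ (\mathrm{mod}\ 4)$ then $4q\equiv 0\ (\mathrm{mod}\ 4)$ gives $G(4q)=2(1+\sqrt{-1})\sqrt q$, and again $\sqrt{-1}\in\Q(\zeta_{4q})$ lets me solve for $\sqrt q\in\Q(\zeta_{4q})$.

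The hard part is the non-membership assertion $\sqrt q\notin\Q(\zeta_{2q})$ in part $(ii)$, and this is where the intersection formula does the real work. When $q\equiv 3\ (\mathrm{mod}\ 4)$, I use $\Q(\zeta_{2q})=\Q(\zeta_q)$: since $G(q)=\sqrt{-1}\,\sqrt q\in\Q(\zeta_q)$, membership of $\sqrt q$ would force $\sqrt{-1}\in\Q(\zeta_q)$, i.e.\ $4\mid q$, contradicting that $q$ is odd. When $q\equiv 2\ (\mathrm{mod}\ 4)$, write $q=2m$ with $m$ odd, so $\Q(\zeta_{2q})=\Q(\zeta_{4m})$ and $\sqrt q=\sqrt 2\,\sqrt m$. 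Applying the already-proved cases to the odd number $m$ shows $\sqrt m\in\Q(\zeta_{4m})$ (using $\sqrt{-1}\in\Q(\zeta_{4m})$ to absorb the sign when $m\equiv 3$), so $\sqrt q\in\Q(\zeta_{4m})$ would be equivalent to $\sqrt 2\in\Q(\zeta_{4m})$. But $\gcd(4m,8)=4$ gives $\Q(\zeta_{4m})\cap\Q(\zeta_8)=\Q(\zeta_4)=\Q(\sqrt{-1})$, and $\sqrt 2$ cannot lie in the imaginary quadratic field $\Q(\sqrt{-1})$; hence $\sqrt 2\notin\Q(\zeta_{4m})$ and therefore $\sqrt q\notin\Q(\zeta_{2q})$. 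The only delicate points left to verify are the exact normalisations in Gauss's evaluation and that each division is by a nonzero field element, both of which are routine.
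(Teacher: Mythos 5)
The paper offers no proof of this lemma at all: it is imported verbatim from Kumar--Scholtz--Welch \cite{Kumar} as a known fact about cyclotomic fields, so there is nothing internal to compare your argument against. Your proof is correct and is the standard self-contained route. The Gauss-sum evaluations you quote are the classical ones, and each division you perform ($1+\sqrt{-1}$, $2(1+\sqrt{-1})$, $\sqrt{-1}$, $\sqrt{m}$) is indeed by a nonzero element. The two delicate steps both check out: for $q\equiv 3 \pmod 4$, the identity $G(q)=\sqrt{-1}\,\sqrt{q}$ together with $\Q(\zeta_{2q})=\Q(\zeta_q)$ shows that $\sqrt{q}\in\Q(\zeta_{2q})$ would force $\zeta_4\in\Q(\zeta_q)$, impossible for odd $q$ by the intersection formula; and for $q=2m\equiv 2 \pmod 4$, your reduction of $\sqrt{q}\notin\Q(\zeta_{4m})$ to $\sqrt{2}\notin\Q(\zeta_{4m})$ is legitimate because $\sqrt{m}\in\Q(\zeta_{4m})$ by the already-established cases, and then $\Q(\zeta_{4m})\cap\Q(\zeta_8)=\Q(\zeta_{\gcd(4m,8)})=\Q(\zeta_4)$, whose only real elements are rational, finishes it. What your write-up buys is a complete proof where the paper has only a citation; the cost is that it leans on two nontrivial external inputs of its own (Gauss's evaluation of $\sum_a \zeta_q^{a^2}$ and the intersection theorem $\Q(\zeta_a)\cap\Q(\zeta_b)=\Q(\zeta_{\gcd(a,b)})$), which is the same trade-off the original source makes, so this is the expected argument rather than a genuinely new one.
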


\begin{Lemma}
[see \cite{Wang}]
\label{Lemma2.3}
For gbent function $f \in \mathcal{GB}_n^{p^k}$,  there exists a function $f^{\ast}: \Z_p^{n}\rightarrow \Z_{p^k}$ such that
\begin{eqnarray*}
 \mathcal{H}_f(\mathbf{u})=\left\{
\begin{array}{l}
\pm \zeta_{p^k}^{f^{\ast}(\mathbf{u})}  ~~~~~~~~~~~~$if$~~$n$~~is~~even~~or~~$n$~~is ~~odd~~and~~p \equiv 1(mod~4),\\
\pm \sqrt{-1} \zeta_{p^k}^{f^{\ast}(\mathbf{u})} ~~~~ $if$~~ $n$~~is ~~odd~~and~~p \equiv 3(mod~4).
\end{array}
\right.
\end{eqnarray*}
\end{Lemma}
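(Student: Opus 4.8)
The plan is to work inside the cyclotomic field $K=\Q(\zeta_{p^k})$ and to realize $\mathcal H_f(\mathbf u)$, after dividing by $p^{n/2}$ (or by $(\sqrt{-p})^{\,n}$ in the exceptional case), as a root of unity of $\mathcal O_K=\Z[\zeta_{p^k}]$; Lemma \ref{Lemma2.1}$(ii)$ then forces that root of unity into $W_K=\{\pm\zeta_{p^k}^{\ell}\}$, giving the stated form. Fix $\mathbf u$ and set
\[
S \;=\; p^{n/2}\,\mathcal H_f(\mathbf u)\;=\;\sum_{\mathbf x\in\Z_p^n}\zeta_p^{-\mathbf u\cdot\mathbf x}\,\zeta_{p^k}^{f(\mathbf x)} .
\]
Since $\zeta_p=\zeta_{p^k}^{p^{k-1}}$, we have $S\in\mathcal O_K$, and the gbent hypothesis $|\mathcal H_f(\mathbf u)|=1$ reads $S\bar S=p^n$.

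First I would show that \emph{every} Galois conjugate of $S$, not merely $S$ itself, has absolute value $p^{n/2}$. Because $\mathrm{Gal}(K/\Q)$ is abelian with complex conjugation equal to $\sigma_{-1}$ (Lemma \ref{Lemma2.1}$(i)$), applying $\sigma_j$ to the rational identity $S\bar S=p^n$ gives $\sigma_j(S)\,\sigma_{-j}(S)=p^n$; and since $\overline{\sigma_j(S)}=\sigma_{-1}\sigma_j(S)=\sigma_{-j}(S)$, this is exactly $|\sigma_j(S)|^2=p^n$ for every $j$ with $(j,p)=1$. Next I would locate the ideal $(S)$: from $(S)(\bar S)=(p^n)=\big((1-\zeta_{p^k})\mathcal O_K\big)^{n(p-1)p^{k-1}}$ (Lemma \ref{Lemma2.1}$(iii)$) and unique factorization of ideals, $(S)$ is a power of the unique prime $\mathcal P=(1-\zeta_{p^k})\mathcal O_K$ lying over $p$; since $\sigma_{-1}$ fixes $\mathcal P$, the exponents of $(S)$ and $(\bar S)$ coincide, whence $(S)=\mathcal P^{\,n(p-1)p^{k-1}/2}$ (an integer exponent, as $p$ is odd). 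In particular $(S)=(p^{n/2})$ whenever $p^{n/2}\in K$, and $(S)=\big((\sqrt{-p})^{\,n}\big)$ otherwise, so the appropriate rescaling of $S$ is a \emph{unit} of $\mathcal O_K$.

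I would then split according to Lemma \ref{Lemma2.2}. If $n$ is even, or $n$ is odd and $p\equiv1\pmod4$, then $\sqrt p\in K$ and $w:=S/p^{n/2}\in\mathcal O_K$ is a unit; since $\sigma_j(\sqrt p)=\pm\sqrt p$, the first step yields $|\sigma_j(w)|=1$ for all $j$, and as $K/\Q$ is Galois these exhaust the $\Q$-conjugates of $w$. By Kronecker's theorem $w$ is a root of unity, so it lies in $W_K$, whose elements are precisely the $\pm\zeta_{p^k}^{\ell}$; thus $\mathcal H_f(\mathbf u)=\pm\zeta_{p^k}^{f^{\ast}(\mathbf u)}$. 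If instead $n$ is odd and $p\equiv3\pmod4$, then $\sqrt p\notin K$ but $\sqrt{-p}\in K$, and the same argument applied to $w':=S/(\sqrt{-p})^{\,n}$ shows $w'\in W_K$; using $\sqrt{-p}=\sqrt{-1}\,\sqrt p$ we have $p^{n/2}=(\sqrt{-p})^{\,n}/(\sqrt{-1})^{\,n}$ with $(\sqrt{-1})^{\,n}=\pm\sqrt{-1}$ for odd $n$, so $\mathcal H_f(\mathbf u)=(\sqrt{-1})^{\,n}w'=\pm\sqrt{-1}\,\zeta_{p^k}^{f^{\ast}(\mathbf u)}$, as claimed.

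The main obstacle is the first step. Kronecker's theorem demands that \emph{all} archimedean absolute values of the rescaled element equal $1$, whereas the gbent condition only controls the single standard embedding. The device that bridges this gap is that $S\bar S=p^n$ is a rational integer, hence stable under $\mathrm{Gal}(K/\Q)$, together with the fact that complex conjugation is itself an automorphism of the abelian extension $K/\Q$; these two facts force $|\sigma_j(S)|$ to be independent of $j$. A secondary point, settled by the total ramification of $p$ in Lemma \ref{Lemma2.1}$(iii)$, is to verify that the rescaled element really is an algebraic integer (indeed a unit), which is precisely what makes Kronecker's theorem applicable.
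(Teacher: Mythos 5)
Your proof is correct, but note that the paper itself contains no internal proof of this lemma to compare against: the statement is imported verbatim from \cite{Wang}, which is where the actual argument lives. Your route is the standard one for results of this type (going back to \cite{Kumar}) and uses exactly the toolkit the paper assembles in Section 2: normalize $S=p^{n/2}\mathcal{H}_f(\mathbf{u})\in\Z[\zeta_{p^k}]$; use that $S\bar{S}=p^n$ is rational, that $\mathrm{Gal}(\Q(\zeta_{p^k})/\Q)$ is abelian, and that complex conjugation is $\sigma_{-1}$ (Lemma \ref{Lemma2.1}(i)) to get $|\sigma_j(S)|=p^{n/2}$ for every $j$; use total ramification of $p$ (Lemma \ref{Lemma2.1}(iii)) and unique factorization of ideals to identify $(S)$ as the appropriate power of $(1-\zeta_{p^k})\mathcal{O}_K$, so that the rescaled element is a unit of $\mathcal{O}_K$; and finish with Kronecker's theorem together with the determination of $W_K$ in Lemma \ref{Lemma2.1}(ii). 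You also correctly isolate the two points where a naive application of Kronecker would fail, namely control of \emph{all} archimedean embeddings and integrality of the quotient, and both are handled properly.

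One repair is needed in the case $n$ odd, $p\equiv 3\pmod 4$: you assert $\sqrt{-p}\in K$ with reference to Lemma \ref{Lemma2.2}, but that lemma as stated concerns only $\sqrt{q}$ for positive integers $q$; it gives $\sqrt{p}\in\Q(\zeta_{4p})\setminus\Q(\zeta_{2p})$ and says nothing about $\sqrt{-p}$. The fact you need, $\sqrt{-p}\in\Q(\zeta_p)\subseteq K$ when $p\equiv 3\pmod 4$, is the classical quadratic Gauss sum identity, i.e.\ that the Gauss sum $\sum_{t=1}^{p-1}\chi(t)\zeta_p^{t}$ (with $\chi$ the Legendre symbol) squares to $(-1)^{(p-1)/2}p$; this is standard (see \cite{Washington}) but should be invoked explicitly. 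Alternatively, run your argument inside $L=\Q(\zeta_{4p^k})$, where $\sqrt{p}\in L$ does follow from Lemma \ref{Lemma2.2}(ii): the same ideal and Kronecker steps give $S/p^{n/2}\in W_L=\{\pm\zeta_{p^k}^{j},\pm\sqrt{-1}\,\zeta_{p^k}^{j}\}$, and the possibility $S/p^{n/2}=\pm\zeta_{p^k}^{j}$ is excluded because it would force $\sqrt{p}=\pm\zeta_{p^k}^{-j}S/p^{(n-1)/2}\in K$, contradicting Lemma \ref{Lemma2.2}(ii) (the unique quadratic subfield of $K$ is already contained in $\Q(\zeta_p)$, since $\mathrm{Gal}(K/\Q)$ is cyclic). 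Either patch is routine, so this is a citation fix rather than a structural gap.
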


\begin{Lemma}
\label{Lemma2.4}
Let $k$ is a positive integer and $a\in \Z_{p^t}$.  Then
\begin{eqnarray*}
\begin{split}
\zeta_{p^k}^a=\frac{1}{p^t} \sum \limits_{i\in \Z_{p^t}} \left(\sum \limits_{j\in \Z_{p^t}} \zeta_{p^t}^{(a-i)j}\right) \zeta_{p^k}^i
\end{split}
\end{eqnarray*}
\end{Lemma}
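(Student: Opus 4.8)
The plan is to recognize the right-hand side as a discrete Fourier inversion and to evaluate it by first collapsing the inner sum over $j$. First I would fix integer representatives $0 \le a, i \le p^t - 1$ for the elements of $\Z_{p^t}$, so that every exponent appearing in the statement is unambiguous; this is the only point where the formulation needs a word of care, since $\zeta_{p^k}$ has order $p^k$ while the index $i$ runs over $\Z_{p^t}$.

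For each fixed $i$, the inner sum $\sum_{j \in \Z_{p^t}} \zeta_{p^t}^{(a-i)j}$ is a geometric series in the primitive $p^t$-th root of unity $\zeta_{p^t}$. Writing $m = a - i$, if $p^t \mid m$ then $\zeta_{p^t}^{m} = 1$ and every summand equals $1$, so the sum is $p^t$; otherwise $\zeta_{p^t}^{m} \ne 1$ and summing the geometric progression gives $(\zeta_{p^t}^{m p^t} - 1)/(\zeta_{p^t}^{m} - 1) = 0$, because $\zeta_{p^t}^{m p^t} = 1$. Thus the inner sum equals $p^t$ exactly when $i \equiv a \pmod{p^t}$ and vanishes otherwise --- this is the standard orthogonality relation for $p^t$-th roots of unity.

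Substituting this back, the outer sum over $i \in \Z_{p^t}$ collapses to the single surviving index. With the chosen representatives, the condition $i \equiv a \pmod{p^t}$ together with $0 \le i, a \le p^t - 1$ forces $i = a$ as integers, hence $\zeta_{p^k}^{i} = \zeta_{p^k}^{a}$. Therefore the right-hand side reduces to $\frac{1}{p^t} \cdot p^t \cdot \zeta_{p^k}^{a} = \zeta_{p^k}^{a}$, which is exactly the left-hand side, completing the argument.

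Since the computation rests only on a geometric-series evaluation, there is no genuine obstacle here; the one point deserving attention is the bookkeeping of representatives just described, which guarantees that the surviving term $\zeta_{p^k}^{i}$ really coincides with $\zeta_{p^k}^{a}$ rather than merely agreeing modulo $p^t$.
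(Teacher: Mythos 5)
Your proof is correct and is in substance the same as the paper's: both rest on the orthogonality relation $\sum_{j\in\Z_{p^t}}\zeta_{p^t}^{(a-i)j}=p^t\delta_{i,a}$, which the paper packages as the generalized Hadamard (DFT) matrix identity $\mathcal{V}_{p^t}(\zeta_{p^t})\mathcal{V}_{p^t}(\zeta_{p^t}^{-1})=p^t\mathrm{I}_{p^t}$ applied to the vector of powers of an arbitrary $z\in\C$ and then specialized to $z=\zeta_{p^k}$. Your direct geometric-series evaluation, together with the bookkeeping of integer representatives guaranteeing that the surviving term is literally $\zeta_{p^k}^a$ (a point the paper glosses over), carries out exactly the same computation without the matrix formalism.
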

\begin{proof}
Let 
\begin{gather*}
\mathcal{V}_{p^t}(\zeta_{p^t})=
\begin{pmatrix}
1 &  1 & \cdots & 1 \\
1 &  \zeta_{p^t} & \cdots & \zeta_{p^t}^{p^t-1}\\
\vdots & \vdots & \ddots & \vdots\\
1 &¡¡\zeta_{p^t}^{p^t-1}  & \cdots & \zeta_{p^t}^{(p^t-1)(p^t-1)}
\end{pmatrix}
\end{gather*}
and
\begin{gather*}
\mathcal{V}_{p^t}(\zeta_{p^t}^{-1})=
\begin{pmatrix}
1 &  1 & \cdots & 1 \\
1 &  \zeta_{p^t}^{-1} & \cdots & \zeta_{p^t}^{-(p^t-1)}\\
\vdots & \vdots & \ddots & \vdots\\
1 &¡¡\zeta_{p^t}^{-(p^t-1)}  & \cdots & \zeta_{p^t}^{-(p^t-1)(p^t-1)}
\end{pmatrix}.
\end{gather*}
In fact, we know that  $\mathcal{V}_{p^t}(\zeta_{p^t})$ is a generalized Hadamard matrix, i.e., $\mathcal{V}_{p^t}(\zeta_{p^t}) (\mathcal{V}_{p^t}(\overline{\zeta_{p^t}}))^{\mathrm{T}}= p^t \mathrm{I}_{p^t}$,
and $(\mathcal{V}_{p^t}(\overline{\zeta_{p^t}}))^{\mathrm{T}}= \mathcal{V}_{p^t}(\zeta_{p^t}^{-1})$, therefore, we have
\begin{equation}
\label{2.01}
\mathcal{V}_{p^t}(\zeta_{p^t})\mathcal{V}_{p^t}(\zeta_{p^t}^{-1}) = p^t \mathrm{I}_{p^t},
\end{equation}
where $\mathrm{I}_{p^t}$ stands for the identity matrix of size $p^t$. Define now a collection of maps from $\C$ to itself by setting
\begin{equation*}
\left(
\begin{array}{c}
h_0(z)\\
h_1(z)\\
\vdots\\
h_{p^t-1}(z)
\end{array}
\right)  =
\mathcal{V}_{p^t}(\zeta_{p^t}^{-1})
\left(
\begin{array}{cccc}
1\\
z\\
\vdots\\
z^{p^t-1},
\end{array}
\right)
\end{equation*}
or equivalently, for any $j \in \Z_{p^t}$,
\begin{equation}
\label{2.02}
h_j(z)=\sum \limits_{i \in \Z_{p^t}} \zeta_{p^t}^{-ji}z^i.
\end{equation}
Furthermore, according to (\ref{2.01}), we have, for any $z \in \C$,\
\begin{equation*}
\left(
\begin{array}{cccc}
1\\
z\\
\vdots\\
z^{p^t-1}
\end{array}
\right)
 =\frac{1}{p^t}\mathcal{V}_{p^t}(\zeta_{p^t})
\left(
\begin{array}{c}
h_0(z)\\
h_1(z)\\
\vdots\\
h_{p^t-1}(z)
\end{array}
\right)
\end{equation*}
that is, for any $a \in \Z_{p^t}$,
\begin{equation}
\label{2.03}
z^a=\frac{1}{p^t} \sum \limits_{j \in \Z_{p^t}} \zeta_{p^t}^{aj}h_j(z).
\end{equation}
Then plugging (\ref{2.02}) into (\ref{2.03}),  we have
\begin{equation}
\label{2.04}
z^a=\frac{1}{p^t} \sum \limits_{i \in \Z_{p^t}} \left(\sum \limits_{j \in \Z_{p^t}}  \zeta_{p^t}^{(a-i)j} \right) z^i.
\end{equation}
Setting $z= \zeta_{p^k}$ and plugging it into (\ref{2.04}),  we get
\begin{equation*}
\zeta_{p^k}^a=\frac{1}{p^t} \sum \limits_{i \in \Z_{p^t}} \left(\sum \limits_{j \in \Z_{p^t}}  \zeta_{p^t}^{(a-i)j} \right) \zeta_{p^k}^i.
\end{equation*}
\end{proof}

\begin{Remark}
\label{Remark2.1}
Lemma  \ref{Lemma2.4} generalizes \cite[Lemma 2.6]{Wang}.
\end{Remark}

\begin{Lemma}
\label{Lemma2.5}
Let $k$, $t$ be positive integers and $k \geq t$. Then we have

$(i)$. $\{1, \zeta_{p^k},\zeta_{p^k}^2,\ldots,\zeta_{p^k}^{p^{k-t}-1}\}$ is a basis of  $\mathbb{Q}( \zeta_{p^k})$
over $\mathbb{Q}(\zeta_{p^t})$;

$(ii)$. $\{1, \zeta_{p^k},\zeta_{p^k}^2,\ldots,\zeta_{p^k}^{p^{k-t}-1}\}$ is a basis of  $\mathbb{Q}(\zeta_{p^k},\sqrt{-1})$
over $\mathbb{Q}(\zeta_{p^t},\sqrt{-1})$.
\end{Lemma}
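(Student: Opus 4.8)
The plan is to handle both parts uniformly by exhibiting $\zeta_{p^k}$ as a primitive element of the top field over the bottom field and then checking that its degree over that bottom field equals $p^{k-t}$, the cardinality of the proposed basis. The key observation is that $\zeta_{p^k}^{p^{k-t}} = \zeta_{p^t}$, so $\zeta_{p^k}$ is a root of the monic polynomial $X^{p^{k-t}} - \zeta_{p^t}$ of degree $p^{k-t}$, whose coefficients lie in $\mathbb{Q}(\zeta_{p^t})$ (and \emph{a fortiori} in $\mathbb{Q}(\zeta_{p^t},\sqrt{-1})$). Consequently every power $\zeta_{p^k}^m$ with $m \geq p^{k-t}$ can be rewritten, modulo this relation, as a base-field combination of $1, \zeta_{p^k}, \ldots, \zeta_{p^k}^{p^{k-t}-1}$; since $\mathbb{Q}(\zeta_{p^t})(\zeta_{p^k}) = \mathbb{Q}(\zeta_{p^k})$ and $\mathbb{Q}(\zeta_{p^t},\sqrt{-1})(\zeta_{p^k}) = \mathbb{Q}(\zeta_{p^k},\sqrt{-1})$, the proposed set spans the relevant extension in each case. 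A spanning set whose size matches the extension degree is automatically a basis, so the whole problem reduces to the two degree computations.

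For part $(i)$, the tower $\mathbb{Q} \subseteq \mathbb{Q}(\zeta_{p^t}) \subseteq \mathbb{Q}(\zeta_{p^k})$ together with the standard cyclotomic degree formula $[\mathbb{Q}(\zeta_{p^s}):\mathbb{Q}] = (p-1)p^{s-1}$ (recorded in Lemma \ref{Lemma2.1} for $s \geq 2$, and equal to $p-1$ for $s=1$) immediately gives $[\mathbb{Q}(\zeta_{p^k}):\mathbb{Q}(\zeta_{p^t})] = (p-1)p^{k-1}/((p-1)p^{t-1}) = p^{k-t}$, which matches the number of listed powers; this closes part $(i)$. When $k=t$ the statement degenerates to the basis $\{1\}$, which is trivially correct.

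For part $(ii)$, I would adjoin $\sqrt{-1}$ to each field and compute $[\mathbb{Q}(\zeta_{p^k},\sqrt{-1}):\mathbb{Q}(\zeta_{p^t},\sqrt{-1})]$ by the tower law, after showing that passing from $\mathbb{Q}(\zeta_{p^s})$ to $\mathbb{Q}(\zeta_{p^s},\sqrt{-1})$ multiplies the degree over $\mathbb{Q}$ by exactly $2$, for both $s=t$ and $s=k$. The main obstacle, and the only place where the hypothesis that $p$ is odd enters, is verifying $\sqrt{-1}\notin\mathbb{Q}(\zeta_{p^s})$, so that $[\mathbb{Q}(\zeta_{p^s},\sqrt{-1}):\mathbb{Q}(\zeta_{p^s})]=2$. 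This follows from Lemma \ref{Lemma2.1}$(ii)$: the group of roots of unity in $\mathbb{Q}(\zeta_{p^s})$ is $\{\zeta_{2p^s}^j\}$ of order $2p^s$, and since $\sqrt{-1}$ is a primitive $4$th root of unity, $\sqrt{-1}\in\mathbb{Q}(\zeta_{p^s})$ would force $4 \mid 2p^s$, i.e.\ $2 \mid p^s$, which is impossible for the odd prime $p$. With this in hand, $[\mathbb{Q}(\zeta_{p^k},\sqrt{-1}):\mathbb{Q}] = 2(p-1)p^{k-1}$ and $[\mathbb{Q}(\zeta_{p^t},\sqrt{-1}):\mathbb{Q}] = 2(p-1)p^{t-1}$, whose ratio is again $p^{k-t}$; combined with the spanning argument of the first paragraph, this proves that the listed powers form a basis of $\mathbb{Q}(\zeta_{p^k},\sqrt{-1})$ over $\mathbb{Q}(\zeta_{p^t},\sqrt{-1})$.
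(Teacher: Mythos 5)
Your proof is correct, but it takes the mirror-image route of the paper's. The paper also reduces part $(i)$ to the degree count $[\Q(\zeta_{p^k}):\Q(\zeta_{p^t})]=p^{k-t}$, but then verifies \emph{linear independence} directly: it expands each coefficient $a_i\in\Q(\zeta_{p^t})$ in the powers $\zeta_{p^t}^j$ with rational (resp.\ $\Q(\sqrt{-1})$-valued, in part $(ii)$) coordinates, uses $\zeta_{p^t}^j\zeta_{p^k}^i=\zeta_{p^k}^{i+jp^{k-t}}$ to merge the whole relation into the standard $\Q$-basis of $\Q(\zeta_{p^k})$, and concludes that all coordinates vanish; in part $(ii)$ it first splits the relation into real and imaginary parts, with $\sqrt{-1}\notin\Q(\zeta_{p^k})$ forcing each part to vanish separately. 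You instead verify \emph{spanning} directly, via the relation $\zeta_{p^k}^{p^{k-t}}=\zeta_{p^t}$, and get independence for free from the dimension count; in part $(ii)$ you obtain the needed dimension by showing that adjoining $\sqrt{-1}$ doubles degrees, which rests on the same fact $\sqrt{-1}\notin\Q(\zeta_{p^s})$ (via the roots-of-unity count in Lemma \ref{Lemma2.1}$(ii)$) that the paper uses for its splitting step. Your route is more uniform and, for part $(ii)$, arguably more complete: the paper's argument there establishes only independence and leaves the spanning half --- that $[\Q(\zeta_{p^k},\sqrt{-1}):\Q(\zeta_{p^t},\sqrt{-1})]\leq p^{k-t}$ --- implicit, which is exactly what your polynomial relation $X^{p^{k-t}}-\zeta_{p^t}$ supplies. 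What the paper's explicit computation buys is the concrete coefficient-comparison mechanism that is reused later (e.g.\ in Theorems \ref{thm3.1.01}, \ref{thm3.1.03} and \ref{thm5.1.1}, where two expansions in the basis $\{\gamma_{\mathbf{c}}\}$ are equated coefficient by coefficient). One small citation point: when you invoke Lemma \ref{Lemma2.1}$(ii)$ for $s=t$, that lemma is stated only for exponents at least $2$; this is harmless, since $\sqrt{-1}\notin\Q(\zeta_{p^k})$ already implies $\sqrt{-1}\notin\Q(\zeta_{p^t})$ by the inclusion $\Q(\zeta_{p^t})\subseteq\Q(\zeta_{p^k})$, and the case $k=t=1$ is trivial.
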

\begin{proof}
(i). Since $[\Q({\zeta_{p^k}}): \Q({\zeta_{p^t}})] = \frac{[\Q({\zeta_{p^k}}): \Q]} {[\Q({\zeta_{p^t}}): \Q]} = \frac{(p-1)p^{k-1}}{(p-1)p^{t-1}} = p^{k-t}$,
we need only to prove $\{1, \zeta_{p^k},\zeta_{p^k}^2,\ldots,\zeta_{p^k}^{p^{k-t}-1}\}$
is linear independently over $\mathbb{Q}(\zeta_{p^t})$.

Suppose  there exists $a_i= \sum_{j=0}^{(p-1)p^{t-1}-1}a_{ij}\zeta_{p^t}^{j} \in \mathbb{Q}(\zeta_{p^t})$, $ 0\leq i \leq p^{k-t}-1$,
such that
\[\sum \limits_{i=0}^{p^{k-t}-1} a_i \zeta_{p^k}^{i}=0,\]
i.e.,
\[\sum \limits_{i=0}^{p^{k-t}-1} \sum \limits_{j=0}^{(p-1)p^{t-1}-1}a_{ij}\zeta_{p^t}^{j} \zeta_{p^k}^{i}=
\sum \limits_{i=0}^{p^{k-t}-1} \sum \limits_{j=0}^{(p-1)p^{t-1}-1}a_{ij} \zeta_{p^k}^{i+jp^{k-t}}=0.\]
It is well known that $\{1, \zeta_{p^k},\zeta_{p^k}^2,\ldots,\zeta_{p^k}^{(p-1)p^{k-1}-1}\}$ is a basis of $\mathbb{Q}(\zeta_{p^k})$ over $\mathbb{Q}$, thus
$a_{ij}=0$ for all $0 \leq i \leq p^{k-t}-1$ and  $0\leq j \leq (p-1)p^{t-1}-1$, i.e., $a_i=0$ for $0 \leq i \leq p^{k-t}-1$. 

(ii). Suppose  there exists $a_i= \sum_{j=0}^{(p-1)p^{t-1}-1}a_{ij}\zeta_{p^t}^{j} \in \mathbb{Q}(\zeta_{p^t},\sqrt{-1})$, $ 0\leq i \leq p^{k-t}-1$,
and $a_{ij}=b_{ij}+c_{ij}\sqrt{-1}$,
such that
\[\sum \limits_{i=0}^{p^{k-t}-1} a_i \zeta_{p^k}^{i}=0,\]
i.e.,
\begin{eqnarray*}
\sum \limits_{i=0}^{p^{k-t}-1} \sum \limits_{j=0}^{(p-1)p^{t-1}-1}a_{ij}\zeta_{p^t}^{j} \zeta_{p^k}^{i}&=&
\sum \limits_{i=0}^{p^{k-t}-1} \sum \limits_{j=0}^{(p-1)p^{t-1}-1}a_{ij} \zeta_{p^k}^{i+jp^{k-t}}\\
&=&\sum \limits_{i=0}^{p^{k-t}-1} \sum \limits_{j=0}^{(p-1)p^{t-1}-1}b_{ij} \zeta_{p^k}^{i+jp^{k-t}}
+\sqrt{-1}\sum \limits_{i=0}^{p^{k-t}-1} \sum \limits_{j=0}^{(p-1)p^{t-1}-1}c_{ij} \zeta_{p^k}^{i+jp^{k-t}}\\
&=&0.
\end{eqnarray*}
If $\sum \limits_{i=0}^{p^{k-t}-1} \sum \limits_{j=0}^{(p-1)p^{t-1}-1}b_{ij} \zeta_{p^k}^{i+jp^{k-t}} \neq 0$, then $\sqrt{-1} \in \mathbb{Q}(\zeta_{p^k})$, which is a contradiction.
Therefore, $$\sum \limits_{i=0}^{p^{k-t}-1} \sum \limits_{j=0}^{(p-1)p^{t-1}-1}b_{ij} \zeta_{p^k}^{i+jp^{k-t}} =\sum \limits_{i=0}^{p^{k-t}-1} \sum \limits_{j=0}^{(p-1)p^{t-1}-1}c_{ij} \zeta_{p^k}^{i+jp^{k-t}}= 0.$$
Similarly as in (i), $b_{ij}$ and $c_{ij}$ equal to 0 for all $0 \leq i \leq p^{k-t}-1$ and  $0\leq j \leq (p-1)p^{t-1}-1$, i.e.,  $a_i=0$ for $0 \leq i \leq p^{k-t}-1$.
\end{proof}

\begin{Remark}
\label{Remark2.2}
 Lemma \ref{Lemma2.5} generalizes \cite[Lemma 2.7]{Wang}.
\end{Remark}

\begin{Lemma}
\label{Lemma2.6}
Let  $\gamma_{\textbf{a}}=\sum_{\textbf{v}\in \Z_{p^t}^{l-1}} \zeta_{p^t}^{-\textbf{a} \cdot \textbf{v}} \zeta_{p^k}^{\sum_{j=1}^{l-1}p^{(j-1)t}v_j}$,
where $\textbf{a} \in \Z_{p^t}^{l-1}$ and $\textbf{v}=(v_1,v_2,\ldots, v_{l-1}) \in \Z_{p^t}^{l-1}$. Then
\[\zeta_{p^k}^{e}=\frac{1}{p^{t(l-1)}} \sum \limits_{\textbf{a}\in \Z_{p^t}^{l-1}} \zeta_{p^t}^{\textbf{a}\cdot \textbf{u}} \gamma_{\textbf{a}}, \]
where $e=\sum_{j=1}^{l-1} u_j p^{(j-1)t}$ and $\textbf{u}=(u_1,u_2,\ldots, u_{l-1}) \in \Z_{p^t}^{l-1}$.
\end{Lemma}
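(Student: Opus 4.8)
The plan is to substitute the definition of $\gamma_{\textbf{a}}$ directly into the right-hand side and reduce everything to the orthogonality relations for the characters of $\Z_{p^t}^{l-1}$, exactly in the spirit of the proof of Lemma \ref{Lemma2.4}, of which this identity is the multivariate (tensor) analogue.

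First I would plug in $\gamma_{\textbf{a}} = \sum_{\textbf{v}\in \Z_{p^t}^{l-1}} \zeta_{p^t}^{-\textbf{a}\cdot\textbf{v}} \zeta_{p^k}^{\sum_{j=1}^{l-1}p^{(j-1)t}v_j}$ and interchange the two finite sums so that the summation over $\textbf{a}$ is carried out first:
\begin{align*}
\frac{1}{p^{t(l-1)}} \sum_{\textbf{a} \in \Z_{p^t}^{l-1}} \zeta_{p^t}^{\textbf{a}\cdot\textbf{u}} \gamma_{\textbf{a}}
&= \frac{1}{p^{t(l-1)}} \sum_{\textbf{v} \in \Z_{p^t}^{l-1}} \zeta_{p^k}^{\sum_{j=1}^{l-1} p^{(j-1)t} v_j} \sum_{\textbf{a} \in \Z_{p^t}^{l-1}} \zeta_{p^t}^{\textbf{a}\cdot(\textbf{u}-\textbf{v})}.
\end{align*}
Next I would observe that the inner sum over $\textbf{a}$ factors across the $l-1$ coordinates, $\sum_{\textbf{a}} \zeta_{p^t}^{\textbf{a}\cdot(\textbf{u}-\textbf{v})} = \prod_{i=1}^{l-1} \sum_{a_i \in \Z_{p^t}} \zeta_{p^t}^{a_i(u_i-v_i)}$, and each factor equals $p^t$ when $u_i \equiv v_i \pmod{p^t}$ and $0$ otherwise. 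Hence the whole product equals $p^{t(l-1)}$ if $\textbf{v}=\textbf{u}$ in $\Z_{p^t}^{l-1}$ and vanishes otherwise, so only the single term $\textbf{v}=\textbf{u}$ survives. Substituting $\textbf{v}=\textbf{u}$ into the surviving factor gives $\zeta_{p^k}^{\sum_{j=1}^{l-1} p^{(j-1)t} u_j} = \zeta_{p^k}^{e}$, and the prefactor $p^{-t(l-1)}$ cancels the $p^{t(l-1)}$ produced by the character sum, yielding precisely the left-hand side.

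The computation is genuinely routine, so there is no serious obstacle; the only point requiring care is the bookkeeping of representatives. Because $\zeta_{p^k}^{\sum_j p^{(j-1)t}v_j}$ is \emph{not} invariant under shifting a coordinate $v_j$ by $p^t$ (the exponent then changes by $p^{jt}$, which need not be a multiple of $p^k$), the sums must be read with a fixed set of representatives, say $0 \le v_j \le p^t-1$ for $\Z_{p^t}$, together with the same convention for the entries of $\textbf{u}$. With this convention fixed, the Kronecker-delta collapse $\textbf{v}=\textbf{u}$ matches the integer exponents on both sides verbatim, so no ambiguity arises. This is the only subtlety, and once it is recorded the identity follows immediately.
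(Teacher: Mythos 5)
Your proof is correct and follows essentially the same route as the paper's: substitute the definition of $\gamma_{\textbf{a}}$, interchange the sums, and collapse the character sum $\sum_{\textbf{a}} \zeta_{p^t}^{\textbf{a}\cdot(\textbf{u}-\textbf{v})}$ to the single term $\textbf{v}=\textbf{u}$ by orthogonality. Your extra remark on fixing representatives (since $\zeta_{p^k}^{\sum_j p^{(j-1)t}v_j}$ is not invariant under shifting $v_j$ by $p^t$) is a sound clarification that the paper leaves implicit.
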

\begin{proof}
\begin{eqnarray*}
\frac{1}{p^{t(l-1)}} \sum \limits_{\textbf{a}\in \Z_{p^t}^{l-1}} \zeta_{p^t}^{\textbf{a}\cdot \textbf{u}} \gamma_{\textbf{a}}&=&\frac{1}{p^{t(l-1)}} \sum \limits_{\textbf{a}\in \Z_{p^t}^{l-1}} \zeta_{p^t}^{\textbf{a}\cdot \textbf{u}}
\sum_{\textbf{v}\in \Z_{p^t}^{l-1}} \zeta_{p^t}^{-\textbf{a} \cdot \textbf{v}} \zeta_{p^k}^{\sum_{j=1}^{l-1}p^{(j-1)t}v_j}\\
&=&\frac{1}{p^{t(l-1)}}\sum_{\textbf{v}\in \Z_{p^t}^{l-1}} \zeta_{p^k}^{\sum_{j=1}^{l-1}p^{(j-1)t}v_j}
\sum \limits_{\textbf{a}\in \Z_{p^t}^{l-1}} \zeta_{p^t}^{\textbf{a}\cdot (\textbf{u}-\textbf{v})}\\
&=&\frac{1}{p^{t(l-1)}}  \cdot p^{t(l-1)}  \zeta_{p^k}^{\sum_{j=1}^{l-1}p^{(j-1)t}u_j} = \zeta_{p^k}^e.
\end{eqnarray*}
\end{proof}

\begin{Remark}
\label{Remark2.3}
 Lemma \ref{Lemma2.6} coincides with  \cite[Lemma 2.8]{Wang}, if we set $l=k$ when $t=1$, $l=k$.
\end{Remark}

\begin{Remark}
\label{Remark2.4}
Note that when $k=lt$, $\{1, \zeta_{p^k},\zeta_{p^k}^2,\ldots,\zeta_{p^k}^{p^{k-t}-1}\}$ is a basis of  $\mathbb{Q}( \zeta_{p^k})$
over $\mathbb{Q}(\zeta_{p^t})$, and $\zeta_{p^k}^e, 0 \leq e \leq p^{k-t}-1$, can be expressed by $\gamma_{\textbf{a}}, \textbf{a}\in \Z_{p^t}^{l-1}$,
where the coefficients can form a non-singular matrix over  $\mathbb{Q}(\zeta_{p^t})$, by Lemma \ref{Lemma2.6}.
So $\{\gamma_{\textbf{a}} |¡¡\textbf{a}\in \Z_{p^t}^{l-1}¡¡\}$ is also a basis of $\mathbb{Q}( \zeta_{p^k})$ over $\mathbb{Q}(\zeta_{p^t})$.
Similarly, when $k=lt$,  $\{\gamma_{\textbf{a}} |¡¡\textbf{a}\in \Z_{p^t}^{l-1}¡¡\}$ is also a basis of $\mathbb{Q}(\zeta_{p^k}, \sqrt{-1})$
over $\mathbb{Q}(\zeta_{p^t},\sqrt{-1})$.
\end{Remark}

\begin{Lemma}
\label{Lemma2.7}
Let $\textbf{a}=(a_1,\cdots, a_{l-1}) \in \Z_{p^t}^{l-1}$ and $\gamma_{\textbf{a}}=\sum_{\textbf{v}\in \Z_{p^t}^{l-1}} \zeta_{p^t}^{-\textbf{a} \cdot \textbf{v}} \zeta_{p^k}^{\sum_{i=1}^{l-1}p^{(i-1)t}v_i}$.
Then
\[\gamma_{\textbf{a}}= \prod \limits_{i=1}^{l-1}\left( \sum \limits_{v_i \in \Z_{p^t}} \zeta_{p^t}^{-v_ia_i} \zeta_{p^{k}}^{p^{(i-1)t}v_i}\right).\]
\end{Lemma}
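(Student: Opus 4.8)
The plan is to observe that both exponents appearing in $\gamma_{\textbf{a}}$ separate additively across the coordinates $v_1,\ldots,v_{l-1}$, so that the summand is a product of single-variable factors; the claimed factorization is then nothing more than the distributive law for a finite sum indexed by a product set. First I would rewrite the inner product as $\textbf{a}\cdot\textbf{v}=\sum_{i=1}^{l-1}a_iv_i$, so that
\[
\zeta_{p^t}^{-\textbf{a}\cdot\textbf{v}}=\prod_{i=1}^{l-1}\zeta_{p^t}^{-a_iv_i},
\]
and likewise, since the exponent of $\zeta_{p^k}$ is $\sum_{i=1}^{l-1}p^{(i-1)t}v_i$,
\[
\zeta_{p^k}^{\sum_{i=1}^{l-1}p^{(i-1)t}v_i}=\prod_{i=1}^{l-1}\zeta_{p^k}^{p^{(i-1)t}v_i}.
\]
Multiplying these two displays, the summand of $\gamma_{\textbf{a}}$ becomes $\prod_{i=1}^{l-1}\zeta_{p^t}^{-a_iv_i}\zeta_{p^k}^{p^{(i-1)t}v_i}$, a product whose $i$-th factor depends only on the single variable $v_i$.

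Next I would exploit that $\Z_{p^t}^{l-1}=\prod_{i=1}^{l-1}\Z_{p^t}$, so that summing over $\textbf{v}\in\Z_{p^t}^{l-1}$ amounts to letting each $v_i$ run independently over $\Z_{p^t}$. Writing $c_i(v_i)=\zeta_{p^t}^{-a_iv_i}\zeta_{p^k}^{p^{(i-1)t}v_i}$ for brevity, the distributive law for finite sums gives
\[
\gamma_{\textbf{a}}=\sum_{v_1\in\Z_{p^t}}\cdots\sum_{v_{l-1}\in\Z_{p^t}}\,\prod_{i=1}^{l-1}c_i(v_i)
=\prod_{i=1}^{l-1}\left(\sum_{v_i\in\Z_{p^t}}c_i(v_i)\right),
\]
which is precisely the asserted identity after substituting back the definition of $c_i(v_i)$.

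The computation presents no genuine obstacle: it is a direct application of the principle that a sum of products over a product index set equals the product of the corresponding partial sums. The only point deserving care is the clean splitting of the two exponentials into products over the coordinate index $i$, together with checking that each $v_i$ ranges over the full group $\Z_{p^t}$ so that the separation of variables is exact. Once these bookkeeping details are confirmed, the factorization follows immediately.
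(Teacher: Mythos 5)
Your proof is correct and is exactly the argument the paper has in mind: the paper's own proof of Lemma \ref{Lemma2.7} consists of the single word ``Obvious,'' and what it deems obvious is precisely the separation of the exponents across coordinates followed by the distributive law over the product index set, which you spell out carefully (including the harmless reduction of exponents modulo $p^t$).
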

\begin{proof}
Obvious.
\end{proof}

\begin{Remark}
\label{Remark2.5}
Lemma \ref{Lemma2.7} is equivalent to   \cite[Lemma 2.9]{Wang}.
\end{Remark}

\section{More characterizations for gbent functions}
\label{sec3.1}

In this section, we give more characterizations for gbent functions
in $\mathcal{GB}_n^{p^k}$,  extending the work in \cite{Wang}.

\begin{Lemma}
\label{Lemma3.1.01}
Let $k \geq 2t$ and $f: \Z_{p}^n \rightarrow \Z_{p^k}$ be defined by
\[f(\mathbf{x})=\sum \limits_{i=1}^{k-1}p^{i-1}a_i(\mathbf{x})=g(\mathbf{x})+p^th(\mathbf{x})\]
for some $p$-ary Boolean functions $a_i : \Z_p^n \rightarrow \Z_p, 1 \leq i \leq k$, and
\begin{eqnarray*}
\begin{split}
g(\mathbf{x})=\sum\limits_{i=1}^t p^{i-1}a_i(\mathbf{x}) \in \mathcal{GB}_n^{p^t},~~
h(\mathbf{x})=\sum \limits_{i=1}^{k-t}p^{i-1}a_{t+i}(\mathbf{x}) \in \mathcal{GB}_n^{p^{k-t}}.
\end{split}
\end{eqnarray*}
Then for every  $\mathbf{u} \in \Z_p^n$, $$\mathcal{H}_f(\mathbf{u})=\frac{1}{p^t} \sum \limits_{c \in \Z_{p^t}} \mathcal{H}_{h+cp^{k-2t}g}(\mathbf{u}) \gamma_c,$$
where $\gamma_c=\sum \limits_{d \in \Z_{p^t}} \zeta_{p^t}^{-cd}\zeta_{p^k}^d$.
\end{Lemma}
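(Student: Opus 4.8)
The plan is to verify the claimed identity by a direct computation: I will expand the right-hand side using the definitions of $\mathcal{H}_{h+cp^{k-2t}g}$ and of $\gamma_c$, interchange the order of summation, and then collapse everything by a character-orthogonality relation over $\Z_{p^t}$ until the defining expression for $\mathcal{H}_f(\mathbf{u})$ emerges.

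First I would write out $\mathcal{H}_{h+cp^{k-2t}g}(\mathbf{u})$ from the definition of the generalized Walsh--Hadamard transform, noting that $h+cp^{k-2t}g$ takes values modulo $p^{k-t}$, so the relevant root of unity is $\zeta_{p^{k-t}}$. The two key reductions are $\zeta_{p^{k-t}}^{h(\mathbf{x})}=\zeta_{p^k}^{p^t h(\mathbf{x})}$ and $\zeta_{p^{k-t}}^{cp^{k-2t}g(\mathbf{x})}=\zeta_{p^k}^{cp^{k-t}g(\mathbf{x})}=\zeta_{p^t}^{cg(\mathbf{x})}$, which follow from $\zeta_{p^{k-t}}=\zeta_{p^k}^{p^t}$ and $\zeta_{p^k}^{p^{k-t}}=\zeta_{p^t}$; here the hypothesis $k\ge 2t$ is exactly what guarantees that $p^{k-2t}$ is a genuine positive integer, so that $h+cp^{k-2t}g$ is a well-defined element of $\mathcal{GB}_n^{p^{k-t}}$. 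This yields $\mathcal{H}_{h+cp^{k-2t}g}(\mathbf{u})=p^{-n/2}\sum_{\mathbf{x}}\zeta_p^{-\mathbf{u}\cdot\mathbf{x}}\zeta_{p^k}^{p^t h(\mathbf{x})}\zeta_{p^t}^{cg(\mathbf{x})}$.

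Next I would substitute this together with $\gamma_c=\sum_{d\in\Z_{p^t}}\zeta_{p^t}^{-cd}\zeta_{p^k}^d$ into the right-hand side and interchange the three finite sums over $c$, $d$ and $\mathbf{x}$, factoring out everything independent of $c$. The inner sum is then $\sum_{c\in\Z_{p^t}}\zeta_{p^t}^{c(g(\mathbf{x})-d)}$, which by orthogonality of the additive characters of $\Z_{p^t}$ equals $p^t$ when $d\equiv g(\mathbf{x})\pmod{p^t}$ and $0$ otherwise. Since $d$ ranges over $\Z_{p^t}$, this collapses the $d$-sum to the single term $d=g(\mathbf{x})$, and the resulting factor $p^t$ cancels the prefactor $1/p^t$.

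What remains is $p^{-n/2}\sum_{\mathbf{x}}\zeta_p^{-\mathbf{u}\cdot\mathbf{x}}\zeta_{p^k}^{p^t h(\mathbf{x})}\zeta_{p^k}^{g(\mathbf{x})}=p^{-n/2}\sum_{\mathbf{x}}\zeta_p^{-\mathbf{u}\cdot\mathbf{x}}\zeta_{p^k}^{g(\mathbf{x})+p^t h(\mathbf{x})}$, and since $f=g+p^t h$ by hypothesis, this is precisely $\mathcal{H}_f(\mathbf{u})$. The computation presents no real obstacle; the only point requiring care is the bookkeeping of moduli --- correctly converting among $\zeta_{p^{k-t}}$, $\zeta_{p^k}$ and $\zeta_{p^t}$, and confirming that the character sum over $\Z_{p^t}$ isolates $d=g(\mathbf{x})$ rather than some shifted residue. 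This argument is the natural generalization of the Fourier-type expansion underlying Lemma \ref{Lemma2.4} and of the $t=1$ computation in \cite{Wang}.
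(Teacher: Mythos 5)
Your proof is correct and is essentially the paper's own computation run in reverse: the paper starts from $\mathcal{H}_f(\mathbf{u})$, expands $\zeta_{p^k}^{g(\mathbf{x})}$ via Lemma \ref{Lemma2.4} (which is exactly the character-orthogonality relation over $\Z_{p^t}$ in packaged form), and rearranges to reach the right-hand side, whereas you expand the right-hand side and collapse it by invoking that orthogonality directly. Both arguments rest on the same sum interchange and the same root-of-unity conversions (in the paper's direction, $\zeta_{p^{k-t}}^{h(\mathbf{x})}\zeta_{p^t}^{cg(\mathbf{x})}=\zeta_{p^{k-t}}^{h(\mathbf{x})+cp^{k-2t}g(\mathbf{x})}$; in yours, the reverse factorization), so there is no substantive difference.
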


\begin{proof}
By the definition of generalized Walsh-Hadamard transform, we have
\begin{eqnarray*}
p^{\frac{n}{2}} \mathcal{H}_f(\mathbf{u})&=& \sum \limits_{\mathbf{x} \in \Z_p^n} \zeta_{p^k}^{f(\mathbf{x})} \zeta_p^{-\mathbf{u}\cdot\mathbf{x}}
=\sum \limits_{\mathbf{x} \in \Z_p^n} \zeta_{p^k}^{g(\mathbf{x})}  \zeta_{p^{k-t}}^{h(\mathbf{x})} \zeta_p^{-\mathbf{u}\cdot\mathbf{x}}\\
&=&\sum \limits_{\mathbf{x} \in \Z_p^n} \zeta_{p^{k-t}}^{h(\mathbf{x})} \zeta_p^{-\mathbf{u}\cdot\mathbf{x}} \left(\frac{1}{p^t} \sum \limits_{d \in \Z_{p^t}}
\left(\sum \limits_{c \in \Z_{p^t}} \zeta_{p^t}^{(g(\mathbf{x})-d)c} \right)\zeta_{p^k}^d\right) \\
&=&\frac{1}{p^t} \sum \limits_{\mathbf{x} \in \Z_p^n} \zeta_{p^{k-t}}^{h(\mathbf{x})} \zeta_p^{-\mathbf{u}\cdot\mathbf{x}} \left(\sum \limits_{c \in \Z_{p^t}}
\left(\sum \limits_{d \in \Z_{p^t}} \zeta_{p^t}^{-cd}\zeta_{p^k}^d \right) \zeta_{p^t}^{cg(\mathbf{x})}\right) \\
&=&\frac{1}{p^t}\sum \limits_{c \in \Z_{p^t}} \sum \limits_{\mathbf{x} \in \Z_p^n} \zeta_{p^{k-t}}^{h(\mathbf{x})} \zeta_p^{-\mathbf{u}\cdot\mathbf{x}} \zeta_{p^t}^{cg(\mathbf{x})}
\left(\sum \limits_{d \in \Z_{p^t}} \zeta_{p^t}^{-cd}\zeta_{p^k}^d \right) \\
&=&\frac{1}{p^t}\sum \limits_{c \in \Z_{p^t}} \sum \limits_{\mathbf{x} \in \Z_p^n} \zeta_{p^{k-t}}^{h(\mathbf{x})+cp^{k-2t}g(\mathbf{x})} \zeta_p^{-\mathbf{u}\cdot\mathbf{x}}
\left(\sum \limits_{d \in \Z_{p^t}} \zeta_{p^t}^{-cd}\zeta_{p^k}^d \right) \\
&=&\frac{p^{\frac{n}{2}}}{p^t}\sum \limits_{c \in \Z_{p^t}} \mathcal{H}_{h+cp^{k-2t}g} (\mathbf{u})\gamma_c.
\end{eqnarray*}
This completes the proof.
\end{proof}

\begin{thm}
\label{thm3.1.01}
Let $k \geq 2t$ and $f: \Z_{p}^n \rightarrow \Z_{p^k}$ be defined by
\[f(\mathbf{x})=\sum \limits_{i=1}^{k-1}p^{i-1}a_i(\mathbf{x})=g(\mathbf{x})+p^th(\mathbf{x}),\]
where $a_i$, $g$ and $h$ are defined as in Lemma \ref{Lemma3.1.01}. Then
$f$ is gbent if and only if for any $\mathbf{u} \in \Z_p^n$ and $c \in \Z_{p^t}$, there exist some $d \in \Z_{p^t}$ and $j \in \Z_{p^{k-t}}$ such that
\begin{eqnarray*}
 \mathcal{H}_{h+cp^{k-2t}g}(\mathbf{u})=\left\{
\begin{array}{l}
\pm \zeta_{p^{k-t}}^{j} \zeta_{p^t}^{cd} ~~~~~~~~~~~~$if$~~$n$~~is~~even~~or~~$n$~~is ~~odd~~and~~p \equiv 1(mod~4),\\
\pm \sqrt{-1} \zeta_{p^{k-t}}^{j} \zeta_{p^t}^{cd} ~~~~ $if$~~ $n$~~is ~~odd~~and~~p \equiv 3(mod~4),
\end{array}
\right.
\end{eqnarray*}
where $h+cp^{k-2t}g \in \mathcal{GB}_n^{p^{k-t}}$ for every $c \in \Z_{p^t}$, and  $j$ and $d$ only depend on $\mathbf{u}$ and $f$.
\end{thm}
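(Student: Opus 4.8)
The plan is to combine the expansion in Lemma \ref{Lemma3.1.01} with a rigidity statement coming from a basis, and then simply read off coefficients. Throughout fix $\mathbf{u} \in \Z_p^n$ and abbreviate $A_c := \mathcal{H}_{h+cp^{k-2t}g}(\mathbf{u})$, so that Lemma \ref{Lemma3.1.01} reads $\mathcal{H}_f(\mathbf{u}) = \frac{1}{p^t}\sum_{c \in \Z_{p^t}} A_c \gamma_c$. Since $k \geq 2t$ we have $\zeta_{p^t} = \zeta_{p^{k-t}}^{p^{k-2t}} \in \Q(\zeta_{p^{k-t}})$, and each $A_c$ lies in $\Q(\zeta_{p^{k-t}})$ (resp. $\Q(\zeta_{p^{k-t}}, \sqrt{-1})$ when $n$ is odd and $p \equiv 3 \pmod 4$), because $h+cp^{k-2t}g \in \mathcal{GB}_n^{p^{k-t}}$ and, by Lemma \ref{Lemma2.2}, the factor $p^{-n/2}$ stays in this field. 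The first step is to record that $\{\gamma_c : c \in \Z_{p^t}\}$ is in fact a basis of $\Q(\zeta_{p^k})$ over $\Q(\zeta_{p^{k-t}})$ (and of $\Q(\zeta_{p^k}, \sqrt{-1})$ over $\Q(\zeta_{p^{k-t}}, \sqrt{-1})$): by Lemma \ref{Lemma2.5} with $t$ replaced by $k-t$, the set $\{1, \zeta_{p^k}, \ldots, \zeta_{p^k}^{p^t - 1}\}$ is such a basis, and the $\gamma_c$ are its images under the non-singular Fourier matrix $(\zeta_{p^t}^{-cd})_{c,d}$ from (\ref{2.01}), which has entries in $\Q(\zeta_{p^{k-t}})$.

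For the forward implication, suppose $f$ is gbent. Lemma \ref{Lemma2.3} gives $\mathcal{H}_f(\mathbf{u}) = \varepsilon (\sqrt{-1})^{\delta} \zeta_{p^k}^{f^{\ast}(\mathbf{u})}$ with $\varepsilon \in \{\pm 1\}$ and $\delta \in \{0,1\}$ fixed by the parity of $n$ and $p \bmod 4$. Writing $f^{\ast}(\mathbf{u}) = d + p^t j$ with $0 \le d \le p^t - 1$ and $0 \le j \le p^{k-t}-1$, so that $\zeta_{p^k}^{f^{\ast}(\mathbf{u})} = \zeta_{p^{k-t}}^{j}\zeta_{p^k}^{d}$, and applying Lemma \ref{Lemma2.6} in the case $l = 2$ to expand $\zeta_{p^k}^{d} = \frac{1}{p^t}\sum_{c} \zeta_{p^t}^{cd}\gamma_c$, we obtain $\mathcal{H}_f(\mathbf{u}) = \frac{1}{p^t}\sum_c \bigl(\varepsilon(\sqrt{-1})^\delta \zeta_{p^{k-t}}^{j}\zeta_{p^t}^{cd}\bigr)\gamma_c$. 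Comparing this with $\frac{1}{p^t}\sum_c A_c \gamma_c$ and invoking the uniqueness of coefficients in the $\gamma_c$-basis from the first step yields $A_c = \varepsilon(\sqrt{-1})^\delta \zeta_{p^{k-t}}^{j}\zeta_{p^t}^{cd}$ for every $c$, which is exactly the asserted form; here $j$ and $d$ are read off from $f^{\ast}(\mathbf{u})$ and hence depend only on $\mathbf{u}$ and $f$.

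For the converse, assume each $A_c$ has the stated form with a common $j,d$ and the sign/$\sqrt{-1}$-factor uniform in $c$ (as produced above). Substituting into Lemma \ref{Lemma3.1.01} and using the orthogonality relation $\sum_{c \in \Z_{p^t}} \zeta_{p^t}^{cd}\gamma_c = p^t \zeta_{p^k}^{d}$ (the $l=2$ instance of Lemma \ref{Lemma2.6}), all cross terms collapse and we get $\mathcal{H}_f(\mathbf{u}) = \varepsilon(\sqrt{-1})^\delta \zeta_{p^{k-t}}^{j}\zeta_{p^k}^{d}$, a product of roots of unity (times $\pm\sqrt{-1}$), so $|\mathcal{H}_f(\mathbf{u})| = 1$; as $\mathbf{u}$ was arbitrary, $f$ is gbent. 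The main obstacle is the first step: one must verify that the $p^t$ elements $\gamma_c$ really form a basis over the correct ground field $\Q(\zeta_{p^{k-t}})$ (not merely over $\Q(\zeta_{p^t})$), since only then is the passage from the $\gamma_c$-expansion of $\mathcal{H}_f(\mathbf{u})$ to the individual identities for the $A_c$ legitimate. This is exactly where the hypothesis $k \geq 2t$ is essential, guaranteeing both $\zeta_{p^t} \in \Q(\zeta_{p^{k-t}})$ and that the power set $\{\zeta_{p^k}^{d}\}_{0 \le d \le p^t-1}$ sits inside the integral basis supplied by Lemma \ref{Lemma2.5}.
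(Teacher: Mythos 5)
Your proof is correct and follows essentially the same route as the paper's: expand $\mathcal{H}_f(\mathbf{u})$ via Lemma \ref{Lemma3.1.01}, write $\mathcal{H}_f(\mathbf{u})=\pm(\sqrt{-1})^{\delta}\zeta_{p^{k-t}}^{j}\zeta_{p^k}^{d}$ via Lemma \ref{Lemma2.3}, expand $\zeta_{p^k}^{d}$ via Lemma \ref{Lemma2.6}, and identify coefficients using that $\{\gamma_c : c\in\Z_{p^t}\}$ is a basis of $\Q(\zeta_{p^k})$ over $\Q(\zeta_{p^{k-t}})$ (resp.\ $\Q(\zeta_{p^k},\sqrt{-1})$ over $\Q(\zeta_{p^{k-t}},\sqrt{-1})$), which is exactly the paper's use of Lemma \ref{Lemma2.5} and Remark \ref{Remark2.4}. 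The only difference is that you also write out the converse direction explicitly, with the (necessary) uniform-in-$c$ reading of the $\pm$ sign, whereas the paper's proof records only the forward implication.
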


\begin{proof}
If $n$ is even or $n$ is odd and $p\equiv 1~(mod~4)$, then $\mathcal{H}_f{(\textbf{u})}= \pm \zeta_{p^k}^i$, for some $0 \leq i \leq p^k-1$,
by Lemma \ref{Lemma2.3}.
Hence, $\mathcal{H}_f{(\textbf{u})}$ can be expressed as $\mathcal{H}_f{(\textbf{u})}= \pm \zeta_{p^{k-t}}^j \zeta_{p^k}^{i-jp^t}$,
where $0 \leq j \leq p^{k-t}-1$. Let $d=i-jp^{t}$, $0 \leq d \leq p^{t}-1$.
According to Lemma \ref{Lemma3.1.01}, we have
\begin{eqnarray}
\label{3.1.01}
\mathcal{H}_f{(\textbf{u})}&=& \frac{1}{p^t} \sum \limits_{c \in \Z_{p^t}} \mathcal{H}_{h+cp^{k-2t}g}(\mathbf{u}) \gamma_c,
\end{eqnarray}
where $\gamma_c=\sum \limits_{d \in \Z_{p^t}} \zeta_{p^t}^{-cd}\zeta_{p^k}^d$. By  Lemma \ref{Lemma2.6},  we have
\[\zeta_{p^k}^{d}=\frac{1}{p^{t}} \sum \limits_{c\in \Z_{p^t}} \zeta_{p^t}^{cd} \gamma_{c}.\]
Therefore,
\begin{eqnarray}
\label{3.1.02}
\begin{split}
\mathcal{H}_f{(\textbf{u})}&= \pm \zeta_{p^{k-t}}^j \zeta_{p^k}^{i-jp^t}=\pm \zeta_{p^{k-t}}^j \zeta_{p^k}^{d}\\
&= \pm \frac{1}{p^{t}}\zeta_{p^{k-t}}^j \sum \limits_{c\in \Z_{p^t}} \zeta_{p^t}^{c d} \gamma_{c}.
\end{split}
\end{eqnarray}
Combining  (\ref{3.1.01}) with (\ref{3.1.02}),  we can get
\begin{eqnarray}
\label{3.1.03}
\begin{split}
\mathcal{H}_f{(\textbf{u})}&=\frac{1}{p^t} \sum \limits_{c \in \Z_{p^t}} \mathcal{H}_{h+cp^{k-2t}g}(\mathbf{u}) \gamma_c\\
&= \pm \frac{1}{p^{t}}\zeta_{p^{k-t}}^j \sum \limits_{c\in \Z_{p^t}} \zeta_{p^t}^{c\cdot d} \gamma_{c}.
\end{split}
\end{eqnarray}
Since $h+cp^{k-2t}g \in \mathcal{GB}_n^{p^{k-t}}$, it is absolutely that $\mathcal{H}_{h+cp^{k-2t}g}(\mathbf{u}) \in \Q(\zeta_{p^{k-t}})$
in the case of $n$ is even or $n$  is odd and $p\equiv 1~(mod~4)$. We note that $\{1,\zeta_{p^k},\cdots, \zeta_{p^k}^{p^t-1}\}$
is a basis of $\Q(\zeta_{p^k})$ over $\Q(\zeta_{p^{k-t}})$ by  Lemma \ref{Lemma2.5} (i). So, $\{\gamma_c|c\in \Z_{p^t}\}$
is also a basis of $\Q(\zeta_{p^k})$ over $\Q(\zeta_{p^{k-t}})$ by  Remark \ref{Remark2.4}. Further, by  $k\geq 2t$,  we have $\Q(\zeta_{p^t})
\subseteq \Q(\zeta_{p^{k-t}})$.

Based on the above  discussions and (\ref{3.1.03}), we can get
\[\mathcal{H}_{h+cp^{k-2t}g}(\mathbf{u})= \pm \zeta_{p^{k-t}}^j  \zeta_{p^t}^{c d}.\]

If $n$ is odd and $p\equiv 3~(mod~4)$, then $\mathcal{H}_f{(\textbf{u})}= \pm \sqrt{-1}\zeta_{p^k}^i$ for some $0 \leq i \leq p^k-1$
by Lemma \ref{Lemma2.3}. Note that in this case, by  Lemma \ref{Lemma2.2} (ii), we have
\[\mathcal{H}_{h+cp^{k-2t}g}(\mathbf{u}) \in \Q(\zeta_{4p^{k-t}}) \setminus \Q(\zeta_{2p^{k-t}})  \subseteq \Q(\zeta_{p^{k-t}},\sqrt{-1}).\]
Similar as before, we can get
\[\mathcal{H}_{h+cp^{k-2t}g}(\mathbf{u})=\pm \sqrt{-1} \zeta_{p^{k-t}}^{j} \zeta_{p^t}^{cd}.\]
\end{proof}

\begin{thm}
\label{thm3.1.02}
Let $f \in \mathcal{GB}_n^{p^k}$ and $f(\mathbf{x})=\sum\limits_{i=1}^{k}p^{i-1}a_i(\mathbf{x})$,
for some $p$-ary Boolean functions $a_i : \Z_p^n \rightarrow \Z_p, 1 \leq i \leq k$.
Let $\mathbf{c}=(c_1,\dots,c_{s-1}) \in \Z_{p^t}^{s-1}$, and denote 
\begin{eqnarray*}
g_{\mathbf{c}}(\mathbf{x})=\sum\limits_{i=1}^{k-ts}p^{i-1}a_{t(s-1)+i}(\mathbf{x})+
p^{k-ts}\left(\sum \limits_{i=1}^{s-1}c_i\left(\sum\limits_{j=1}^tp^{j-1}a_{(i-1)t+j}(\mathbf{x})\right)
+\sum\limits_{j=1}^t p^{j-1}a_{k-t+j}(\mathbf{x})\right).
\end{eqnarray*}
Then $f$ is  gbent  if and only if $g_{\mathbf{c}}$ is gbent in $\mathcal{GB}_n^{p^{k-(s-1)t}}$  for any $s$ satisfying  $st \leq k$, and
there exist  some $\mathbf{d} \in \Z_{p^t}^{s-1}$ and $j \in \Z_{p^{k-t}}$ such that
\begin{eqnarray*}
\mathcal{H}_{g_{\mathbf{c}}}(\mathbf{u})=\left\{
\begin{array}{l}
\pm \zeta_{p^{k-(s-1)t}}^{j} \zeta_{p^t}^{\mathbf{c} \cdot \mathbf{d}} ~~~~~~~~~~~~$if$~~$n$~~is~~even~~or~~$n$~~is ~~odd~~and~~p \equiv 1(mod~4),\\
\pm \sqrt{-1} \zeta_{p^{k-(s-1)t}}^{j} \zeta_{p^t}^{\mathbf{c} \cdot \mathbf{d}} ~~~~ $if$~~ $n$~~is ~~odd~~and~~p \equiv 3(mod~4),
\end{array}
\right.
\end{eqnarray*}
where $j$ and $d$ only depend on $\mathbf{u}$ and $f$.
\end{thm}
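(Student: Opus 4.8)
The plan is to mimic the proof of Theorem \ref{thm3.1.01}, replacing the one-dimensional decomposition $f=g+p^{t}h$ by an $(s-1)$-block decomposition and invoking the multidimensional machinery of Lemmas \ref{Lemma2.6} and \ref{Lemma2.7}. First I would regroup the base-$p$ digits of $f$ into blocks: set $G_i=\sum_{j=1}^{t}p^{j-1}a_{(i-1)t+j}$ for $1\le i\le s-1$, $H=\sum_{i=1}^{k-ts}p^{i-1}a_{t(s-1)+i}$ and $L=\sum_{j=1}^{t}p^{j-1}a_{k-t+j}$, so that $f=\sum_{i=1}^{s-1}p^{(i-1)t}G_i+p^{(s-1)t}H+p^{k-t}L$ and, for $\mathbf{c}\in\Z_{p^t}^{s-1}$, $g_{\mathbf{c}}=H+p^{k-ts}\bigl(\sum_{i=1}^{s-1}c_iG_i+L\bigr)$, with $g_{\mathbf{c}}\in\mathcal{GB}_n^{p^{k-(s-1)t}}$.

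The first key step is a multidimensional analogue of Lemma \ref{Lemma3.1.01}. Writing $\zeta_{p^k}^{f}=\zeta_{p^k}^{\sum_{i=1}^{s-1}p^{(i-1)t}G_i}\,\zeta_{p^{k-(s-1)t}}^{H}\,\zeta_{p^t}^{L}$ and expanding the first factor by Lemma \ref{Lemma2.6} with $l=s$, I would obtain $\zeta_{p^k}^{\sum_{i}p^{(i-1)t}G_i}=p^{-t(s-1)}\sum_{\mathbf{a}\in\Z_{p^t}^{s-1}}\zeta_{p^t}^{\mathbf{a}\cdot\mathbf{G}}\gamma_{\mathbf{a}}$, where $\mathbf{G}=(G_1,\dots,G_{s-1})$. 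Substituting into the Walsh--Hadamard sum and recognizing $\zeta_{p^{k-(s-1)t}}^{H}\zeta_{p^t}^{\mathbf{a}\cdot\mathbf{G}+L}=\zeta_{p^{k-(s-1)t}}^{g_{\mathbf{a}}}$ then yields the identity
\begin{equation*}
\mathcal{H}_f(\mathbf{u})=\frac{1}{p^{t(s-1)}}\sum_{\mathbf{a}\in\Z_{p^t}^{s-1}}\mathcal{H}_{g_{\mathbf{a}}}(\mathbf{u})\,\gamma_{\mathbf{a}}.
\end{equation*}
This immediately gives the ``if'' direction: assuming each $\mathcal{H}_{g_{\mathbf{c}}}(\mathbf{u})=\pm\zeta_{p^{k-(s-1)t}}^{j}\zeta_{p^t}^{\mathbf{c}\cdot\mathbf{d}}$ with $j,\mathbf{d}$ independent of $\mathbf{c}$, I would factor $\pm\zeta_{p^{k-(s-1)t}}^{j}$ out of the sum and collapse $p^{-t(s-1)}\sum_{\mathbf{c}}\zeta_{p^t}^{\mathbf{c}\cdot\mathbf{d}}\gamma_{\mathbf{c}}$ back to the single root of unity $\zeta_{p^k}^{\sum_{m}d_mp^{(m-1)t}}$ via Lemma \ref{Lemma2.6}, whence $|\mathcal{H}_f(\mathbf{u})|=1$.

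For the ``only if'' direction I would start from Lemma \ref{Lemma2.3}, writing $\mathcal{H}_f(\mathbf{u})=\pm\zeta_{p^k}^{i}$ (resp. $\pm\sqrt{-1}\,\zeta_{p^k}^{i}$), peel off $\zeta_{p^{k-(s-1)t}}^{j}$ by choosing $j$ so that the remaining exponent $d'=i-jp^{(s-1)t}$ lies in $\{0,\dots,p^{(s-1)t}-1\}$, expand $d'=\sum_{m=1}^{s-1}d_mp^{(m-1)t}$ in base $p^t$ to define $\mathbf{d}$, and re-express $\zeta_{p^k}^{d'}$ through Lemma \ref{Lemma2.6}. Comparing the two resulting expansions of $\mathcal{H}_f(\mathbf{u})$ in the family $\{\gamma_{\mathbf{c}}\}$ and invoking its linear independence then forces $\mathcal{H}_{g_{\mathbf{c}}}(\mathbf{u})=\pm\zeta_{p^{k-(s-1)t}}^{j}\zeta_{p^t}^{\mathbf{c}\cdot\mathbf{d}}$ for every $\mathbf{c}$; by Lemma \ref{Lemma2.2}(ii) the $n$ odd, $p\equiv3~(\mathrm{mod}~4)$ case is identical after working over $\Q(\zeta_{p^{k-(s-1)t}},\sqrt{-1})$.

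The step I expect to be the main obstacle is justifying this coefficient comparison, i.e. that $\{\gamma_{\mathbf{c}}\mid\mathbf{c}\in\Z_{p^t}^{s-1}\}$ is a basis of $\Q(\zeta_{p^k})$ over $\Q(\zeta_{p^{k-(s-1)t}})$ (and of $\Q(\zeta_{p^k},\sqrt{-1})$ over $\Q(\zeta_{p^{k-(s-1)t}},\sqrt{-1})$). Remark \ref{Remark2.4} only records this over $\Q(\zeta_{p^t})$, so I would extend it: by Lemma \ref{Lemma2.5}(i) (with $t$ replaced by $k-(s-1)t$) the powers $\{\zeta_{p^k}^{e}:0\le e\le p^{(s-1)t}-1\}$ form such a basis, while Lemmas \ref{Lemma2.6} and \ref{Lemma2.7} express them in the $\gamma_{\mathbf{a}}$ through a tensor product of discrete Fourier matrices whose entries lie in $\Q(\zeta_{p^t})$. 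Here the hypothesis $st\le k$ is essential, since it guarantees $\Q(\zeta_{p^t})\subseteq\Q(\zeta_{p^{k-(s-1)t}})$, so that these entries are genuinely scalars over the base field and the change-of-basis matrix is invertible; the same inequality places $\zeta_{p^t}^{\mathbf{c}\cdot\mathbf{d}}$ and $\mathcal{H}_{g_{\mathbf{c}}}(\mathbf{u})$ in the base field, which is precisely what legitimizes matching coefficients $\mathbf{c}$ by $\mathbf{c}$.
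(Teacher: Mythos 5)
Your proposal is correct, but it takes a genuinely different route from the paper. The paper proves this theorem by induction on $s$: the base case $s=2$ is exactly Theorem \ref{thm3.1.01}, and the inductive step rewrites $g_{\mathbf{c}}\in\mathcal{GB}_n^{p^{k-(s-1)t}}$ again in the form $g+p^{t}h$ and applies Theorem \ref{thm3.1.01} once more, so it peels off one $t$-block of digits at a time and never needs a multidimensional expansion. You instead prove, in one shot, the identity $\mathcal{H}_f(\mathbf{u})=p^{-t(s-1)}\sum_{\mathbf{a}\in\Z_{p^t}^{s-1}}\mathcal{H}_{g_{\mathbf{a}}}(\mathbf{u})\gamma_{\mathbf{a}}$ — a multidimensional analogue of Lemma \ref{Lemma3.1.01}, in the same spirit as Lemma \ref{Lemma3.1.02} which the paper uses for Theorem \ref{thm3.1.03} in the special case $k=lt$ — and then conclude by comparing coefficients in the family $\{\gamma_{\mathbf{c}}\}$ over $\Q(\zeta_{p^{k-(s-1)t}})$ (resp.\ $\Q(\zeta_{p^{k-(s-1)t}},\sqrt{-1})$). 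Each approach has its merits. The induction reuses Theorem \ref{thm3.1.01} as a black box, but it is awkward at tracking that the data $j$, $\mathbf{d}$ and the $\pm$ sign remain independent of $\mathbf{c}$ as one nests the applications (indeed the paper's inductive step is rather loose on how $(j',\mathbf{d}')$ at level $s+1$ combine with the level-$s$ data, and it does not spell out the converse implication). Your direct argument yields both directions of the equivalence and the uniformity of $(j,\mathbf{d})$ and the sign in a single coefficient comparison; its only extra cost is exactly the point you flag: extending Remark \ref{Remark2.4} so that $\{\gamma_{\mathbf{c}}\mid\mathbf{c}\in\Z_{p^t}^{s-1}\}$ is a basis over $\Q(\zeta_{p^{k-(s-1)t}})$ rather than over $\Q(\zeta_{p^t})$. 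Your justification is sound — by Lemma \ref{Lemma2.5} with $t$ replaced by $k-(s-1)t$, the powers $\zeta_{p^k}^{e}$, $0\le e\le p^{(s-1)t}-1$, form such a basis, the change-of-basis matrix is a Kronecker product of invertible DFT matrices, and $st\le k$ guarantees its entries $\zeta_{p^t}^{-\mathbf{c}\cdot\mathbf{v}}$ lie in the base field — and it is in fact the same unproved extension the paper itself invokes inside the proof of Theorem \ref{thm3.1.01}, so your write-up is, if anything, more careful on this point.
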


\begin{proof}
We show the result by induction.

If $s=1$, the claim is obvious, because $g_{\mathbf{c}}(\mathbf{x})=f(\mathbf{x})$ in this case.

If $s=2$, by taking $g=\sum_{i=1}^{t}p^{i-1}a_i$ and $h=\sum_{i=1}^{k-t}p^{i-1}a_{t+i}$, the claim follows from Theorem \ref{thm3.1.01}.
Since $f$ is gbent if and only if for any $\mathbf{u} \in \Z_p^n$ and $c \in \Z_{p^t}$, there exist some $d \in \Z_{p^t}$ and $j \in \Z_{p^{k-t}}$ such that
\begin{eqnarray*}
\mathcal{H}_{h+cp^{k-2t}g}(\mathbf{u})=\left\{
\begin{array}{l}
\pm \zeta_{p^{k-t}}^{j} \zeta_{p^t}^{cd} ~~~~~~~~~~~~$if$~~$n$~~is~~even~~or~~$n$~~is ~~odd~~and~~p \equiv 1(mod~4),\\
\pm \sqrt{-1} \zeta_{p^{k-t}}^{j} \zeta_{p^t}^{cd} ~~~~ $if$~~ $n$~~is ~~odd~~and~~p \equiv 3(mod~4),
\end{array}
\right.
\end{eqnarray*}
where $j$ and $d$ only depend on $\mathbf{u}$ and $f$. Note that $g_c=h+cp^{k-2t}g$, then it is easy to see
that the claim holds.

Assume the  result is true for some $s$ satisfying $(s+1)t\leq k$, i.e.,  for all $\mathbf{c} \in \Z_{p^t}^{s-1}$, and
\begin{eqnarray}
\label{3.1.04}
g_{\mathbf{c}}(\mathbf{x})=\sum\limits_{i=1}^{k-ts}p^{i-1}a_{t(s-1)+i}(\mathbf{x})+
p^{k-ts}\left(\sum \limits_{i=1}^{s-1}c_i\left(\sum\limits_{j=1}^tp^{j-1}a_{(i-1)t+j}(\mathbf{x})\right)
+\sum\limits_{j=1}^t p^{j-1}a_{k-t+j}(\mathbf{x})\right).
\end{eqnarray}
There exist  some $\mathbf{d} \in \Z_{p^t}^{s-1}$ and $j \in \Z_{p^{k-(s-1)t}}$ such that
\begin{eqnarray*}
\mathcal{H}_{g_{\mathbf{c}}}(\mathbf{u})=\left\{
\begin{array}{l}
\pm \zeta_{p^{k-(s-1)t}}^{j} \zeta_{p^t}^{\mathbf{c} \cdot \mathbf{d}} ~~~~~~~~~~~~$if$~~$n$~~is~~even~~or~~$n$~~is ~~odd~~and~~p \equiv 1(mod~4),\\
\pm \sqrt{-1} \zeta_{p^{k-(s-1)t}}^{j} \zeta_{p^t}^{\mathbf{c} \cdot \mathbf{d}} ~~~~ $if$~~ $n$~~is ~~odd~~and~~p \equiv 3(mod~4),
\end{array}
\right.
\end{eqnarray*}
where $\mathbf{d}$ and $j$ only depend on $\mathbf{u}$ and $f$. We show that the result also holds for $s+1$. 

Note that (\ref{3.1.04}) can be expressed by
\begin{eqnarray*}
g_{\mathbf{c}}(\mathbf{x})&=&\sum\limits_{i=1}^{t}p^{i-1}a_{t(s-1)+i}(\mathbf{x})+ \sum\limits_{i=1}^{k-t(s+1)} p^t\left(p^{i-1}a_{ts+i}(\mathbf{x})+\right. \\
&&\left. p^{k-t(s+1)}\left(\sum \limits_{i=1}^{s-1}c_i\left(\sum\limits_{j=1}^tp^{j-1}a_{(i-1)t+j}(\mathbf{x})\right)
+\sum\limits_{j=1}^t p^{j-1}a_{k-t+j}(\mathbf{x})\right)\right).
\end{eqnarray*}
Let
\begin{eqnarray*}
g(\mathbf{x})=\sum\limits_{i=1}^{t}p^{i-1}a_{t(s-1)+i}(\mathbf{x})
\end{eqnarray*}
and
\begin{eqnarray*}
h(\mathbf{x})=\sum\limits_{i=1}^{k-t(s+1)}p^{i-1}a_{ts+i}(\mathbf{x})+
p^{k-t(s+1)}\left(\sum \limits_{i=1}^{s-1}c_i\left(\sum\limits_{j=1}^tp^{j-1}a_{(i-1)t+j}(\mathbf{x})\right)
+\sum\limits_{j=1}^t p^{j-1}a_{k-t+j}(\mathbf{x})\right).
\end{eqnarray*}
Note that $g_{\mathbf{c}} \in \mathcal{GB}_n^{p^{k-(s-1)t}}$ and $g_{\mathbf{c'}}=h+cp^{k-(s-1)t-2t}g=h+cp^{k-(s+1)t}g \in \mathcal{GB}_n^{p^{k-st}}$,
by Theorem \ref{thm3.1.01},  $g_{\mathbf{c}}$ is gbent if and only if for any $\mathbf{u} \in \Z_p^n$ and $c \in \Z_{p^t}$,
there exist some $d \in \Z_{p^t}$ and $j' \in \Z_{p^{k-st}}$ such that
\begin{eqnarray*}
\mathcal{H}_{g_{\mathbf{c'}}}=\left\{
\begin{array}{l}
\pm \zeta_{p^{k-st}}^{j} \zeta_{p^t}^{\mathbf{c'} \cdot \mathbf{d'}} ~~~~~~~~~~~~$if$~~$n$~~is~~even~~or~~$n$~~is ~~odd~~and~~p \equiv 1(mod~4),\\
\pm \sqrt{-1} \zeta_{p^{k-st}}^{j} \zeta_{p^t}^{\mathbf{c'} \cdot \mathbf{d'}} ~~~~ $if$~~ $n$~~is ~~odd~~and~~p \equiv 3(mod~4),
\end{array}
\right.
\end{eqnarray*}
where $j'$ and $d$ only depend on $\mathbf{u}$ and $g_{\mathbf{c}}$.

This completes the proof.
\end{proof}

\begin{Corollary}
\label{Corollary3.1.02}
Let $f \in \mathcal{GB}_n^{p^k}$ and $f(\mathbf{x})=\sum\limits_{i=1}^{k}p^{i-1}a_i(\mathbf{x})$,
for some $p$-ary Boolean functions $a_i : \Z_p^n \rightarrow \Z_p, 1 \leq i \leq k$.
Let $\mathbf{c}=(c_1,\dots,c_{s-1}) \in \Z_{p}^{s-1}$, and denote 
\begin{eqnarray*}
g_{\mathbf{c}}(\mathbf{x})&=\sum\limits_{i=1}^{k-s}p^{i-1}a_{s-1+i}(\mathbf{x})+p^{k-s}\left(\sum \limits_{i=1}^{s-1}c_ia_i(\mathbf{x})+a_k(\mathbf{x})\right).
\end{eqnarray*}
Then $f$ is  gbent  if and only if $g_{\mathbf{c}}$ is gbent in $\mathcal{GB}_n^{p^{k+1-s}}$  for any $s$ satisfying  $1 \leq s \leq k$, and
there exist  some $\mathbf{d} \in \Z_{p}^{s-1}$ and $j \in \Z_{p^{k-1}}$ such that
\begin{eqnarray*}
\mathcal{H}_{g_{\mathbf{c}}}(\mathbf{u})=\left\{
\begin{array}{l}
\pm \zeta_{p^{k+1-s}}^{j} \zeta_{p}^{\mathbf{c} \cdot \mathbf{d}} ~~~~~~~~~~~~$if$~~$n$~~is~~even~~or~~$n$~~is ~~odd~~and~~p \equiv 1(mod~4),\\
\pm \sqrt{-1} \zeta_{p^{k+1-s}}^{j} \zeta_{p}^{\mathbf{c} \cdot \mathbf{d}} ~~~~ $if$~~ $n$~~is ~~odd~~and~~p \equiv 3(mod~4),
\end{array}
\right.
\end{eqnarray*}
where $j$ and $d$ only depend on $\mathbf{u}$ and $f$.
\end{Corollary}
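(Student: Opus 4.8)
The plan is to recognize that Corollary \ref{Corollary3.1.02} is precisely the specialization of Theorem \ref{thm3.1.02} to the case $t=1$, so that the entire statement follows by substituting $t=1$ into that theorem and simplifying the resulting notation; no new mathematical input is needed. First I would set $t=1$ throughout the hypotheses of Theorem \ref{thm3.1.02}: the constraint $st \le k$ becomes $1 \le s \le k$, the index sets $\Z_{p^t}^{s-1}$ for $\mathbf{c}$ and $\mathbf{d}$ collapse to $\Z_p^{s-1}$, and the parameter $j$ ranges over $\Z_{p^{k-t}}=\Z_{p^{k-1}}$. The ambient space $\mathcal{GB}_n^{p^{k-(s-1)t}}$ in which $g_{\mathbf{c}}$ lives becomes $\mathcal{GB}_n^{p^{k+1-s}}$, while the roots of unity $\zeta_{p^{k-(s-1)t}}$ and $\zeta_{p^t}$ appearing in the evaluation of $\mathcal{H}_{g_{\mathbf{c}}}(\mathbf{u})$ become $\zeta_{p^{k+1-s}}$ and $\zeta_p$, respectively. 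All of these substitutions are immediate.

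The only step requiring genuine care is verifying that the defining expression for $g_{\mathbf{c}}$ in Theorem \ref{thm3.1.02} reduces to the one stated in the corollary when $t=1$. Here I would substitute $t=1$ into
\begin{eqnarray*}
g_{\mathbf{c}}(\mathbf{x})=\sum\limits_{i=1}^{k-ts}p^{i-1}a_{t(s-1)+i}(\mathbf{x})+
p^{k-ts}\left(\sum \limits_{i=1}^{s-1}c_i\left(\sum\limits_{j=1}^tp^{j-1}a_{(i-1)t+j}(\mathbf{x})\right)
+\sum\limits_{j=1}^t p^{j-1}a_{k-t+j}(\mathbf{x})\right)
\end{eqnarray*}
and track the three simplifications: the leading sum becomes $\sum_{i=1}^{k-s}p^{i-1}a_{s-1+i}(\mathbf{x})$; each inner single-term sum $\sum_{j=1}^{1}p^{j-1}a_{(i-1)+j}(\mathbf{x})$ collapses to $a_i(\mathbf{x})$, so that the middle block reduces to $\sum_{i=1}^{s-1}c_i a_i(\mathbf{x})$; and the trailing term $\sum_{j=1}^{1}p^{j-1}a_{k-1+j}(\mathbf{x})$ collapses to $a_k(\mathbf{x})$. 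Together these yield exactly
\begin{eqnarray*}
g_{\mathbf{c}}(\mathbf{x})=\sum\limits_{i=1}^{k-s}p^{i-1}a_{s-1+i}(\mathbf{x})+p^{k-s}\left(\sum \limits_{i=1}^{s-1}c_ia_i(\mathbf{x})+a_k(\mathbf{x})\right),
\end{eqnarray*}
matching the corollary's definition.

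Since there is no real mathematical obstacle, the argument is essentially a bookkeeping verification; the main point to watch is that the index shifts $t(s-1)+i \mapsto s-1+i$ and $(i-1)t+j \mapsto i-1+j$ are correctly tracked at $t=1$, and that $st\le k$ indeed becomes $1\le s\le k$. Once these specializations are confirmed, the biconditional characterization of gbent-ness and the explicit form of $\mathcal{H}_{g_{\mathbf{c}}}(\mathbf{u})$ transfer verbatim from Theorem \ref{thm3.1.02}, completing the proof.
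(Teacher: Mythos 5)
Your proposal is correct and is exactly the paper's own argument: the paper proves this corollary by simply setting $t=1$ in Theorem \ref{thm3.1.02}, and your careful tracking of the index substitutions (the bound $st\le k$ becoming $1\le s\le k$, the collapse of the inner sums, and the resulting form of $g_{\mathbf{c}}$) just makes explicit the bookkeeping that the paper leaves to the reader.
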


\begin{proof}
The result follows by setting $t=1$ in Theorem \ref{thm3.1.02}.
\end{proof}

\begin{Lemma}
\label{Lemma3.1.02}
Let $k=lt$ and $f \in \mathcal{GB}_n^{p^k}$ be defined as $f(\mathbf{x})=\sum\limits_{i=1}^{l}p^{(i-1)t}b_i(\mathbf{x})$,
where $b_i \in \mathcal{GB}_n^{p^t}$, $1 \leq i \leq l$. Then for every  $\mathbf{u} \in \Z_p^n$, $$\mathcal{H}_f(\mathbf{u})=
\frac{1}{p^{k-t}} \sum \limits_{\mathbf{c} \in \Z_{p^t}^{l-1}} \mathcal{H}_{b_l+\sum_{j=1}^{l-1}c_jb_j(\mathbf{x})}(\mathbf{u}) \gamma_{\mathbf{c}},$$
where $\gamma_{\mathbf{c}}=\sum \limits_{\mathbf{d} \in \Z_{p^t}^{l-1}} \zeta_{p^t}^{-\mathbf{c}\cdot \mathbf{d}} \zeta_{p^k}^{\sum_{j=1}^{l-1}p^{(j-1)t}d_j}$.
\end{Lemma}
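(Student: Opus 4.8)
The plan is to follow the same template as the proof of Lemma \ref{Lemma3.1.01}, but to expand the entire $\zeta_{p^k}$-factor in one shot using the multivariate expansion of Lemma \ref{Lemma2.6} instead of iterating the univariate Lemma \ref{Lemma2.4}. First I would start from the definition of the generalized Walsh-Hadamard transform, writing $p^{\frac{n}{2}}\mathcal{H}_f(\mathbf{u})=\sum_{\mathbf{x}\in\Z_p^n}\zeta_{p^k}^{f(\mathbf{x})}\zeta_p^{-\mathbf{u}\cdot\mathbf{x}}$, and split the exponent as $f(\mathbf{x})=\sum_{j=1}^{l-1}p^{(j-1)t}b_j(\mathbf{x})+p^{(l-1)t}b_l(\mathbf{x})$, isolating the top coordinate $b_l$.

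The key step is to apply Lemma \ref{Lemma2.6} to the initial block $\sum_{j=1}^{l-1}p^{(j-1)t}b_j(\mathbf{x})$, taking the vector variable of that lemma to be $(b_1(\mathbf{x}),\dots,b_{l-1}(\mathbf{x}))$ so that its integer $e$ becomes exactly this block. Since $k=lt$ gives $t(l-1)=k-t$, this yields $\zeta_{p^k}^{\sum_{j=1}^{l-1}p^{(j-1)t}b_j(\mathbf{x})}=\frac{1}{p^{k-t}}\sum_{\mathbf{c}\in\Z_{p^t}^{l-1}}\zeta_{p^t}^{\mathbf{c}\cdot\mathbf{b}(\mathbf{x})}\gamma_{\mathbf{c}}$, where $\mathbf{b}(\mathbf{x})=(b_1(\mathbf{x}),\dots,b_{l-1}(\mathbf{x}))$. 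For the remaining top term I would use the identity $k-(l-1)t=t$ to collapse $\zeta_{p^k}^{p^{(l-1)t}b_l(\mathbf{x})}=\zeta_{p^t}^{b_l(\mathbf{x})}$; multiplying the two factors folds $b_l$ into the exponent and produces $\zeta_{p^k}^{f(\mathbf{x})}=\frac{1}{p^{k-t}}\sum_{\mathbf{c}\in\Z_{p^t}^{l-1}}\zeta_{p^t}^{(b_l+\sum_{j=1}^{l-1}c_jb_j)(\mathbf{x})}\gamma_{\mathbf{c}}$.

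Finally I would substitute this back into $p^{\frac{n}{2}}\mathcal{H}_f(\mathbf{u})$, interchange the finite sums over $\mathbf{x}$ and $\mathbf{c}$, pull $\gamma_{\mathbf{c}}$ outside, and recognize the inner sum $\sum_{\mathbf{x}\in\Z_p^n}\zeta_{p^t}^{(b_l+\sum_{j=1}^{l-1}c_jb_j)(\mathbf{x})}\zeta_p^{-\mathbf{u}\cdot\mathbf{x}}$ as $p^{\frac{n}{2}}\mathcal{H}_{b_l+\sum_{j=1}^{l-1}c_jb_j}(\mathbf{u})$. This identification is legitimate because $b_l+\sum_{j=1}^{l-1}c_jb_j\in\mathcal{GB}_n^{p^t}$ and the corresponding transform uses $\zeta_{p^t}$. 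Cancelling the common factor $p^{\frac{n}{2}}$ then gives the claimed identity.

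There is no genuine obstacle here; the content is essentially bookkeeping within the cyclotomic ring $\Z[\zeta_{p^k}]$, where all sums are finite so the interchange of summation and the cancellation are unconditional. The one place that requires care is the index alignment in the application of Lemma \ref{Lemma2.6}: one must check that evaluating its vector variable at $(b_1(\mathbf{x}),\dots,b_{l-1}(\mathbf{x}))$ reproduces the exponent $\sum_{j=1}^{l-1}p^{(j-1)t}b_j(\mathbf{x})$, that the normalizing constant $p^{t(l-1)}$ indeed equals $p^{k-t}$, and that the top coordinate collapses via $\zeta_{p^k}^{p^{(l-1)t}\,\cdot}=\zeta_{p^t}^{\,\cdot}$. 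Once these identifications are in place the result is immediate.
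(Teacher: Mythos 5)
Your proof is correct, and it reaches the paper's identity by a mildly different route: the same decomposition, but a different key lemma doing the expansion. The paper's proof never invokes Lemma \ref{Lemma2.6}; it writes $\zeta_{p^k}^{f(\mathbf{x})}=\zeta_{p^t}^{b_l(\mathbf{x})}\prod_{j=1}^{l-1}\zeta_{p^k}^{p^{(j-1)t}b_j(\mathbf{x})}$, expands each factor separately by the univariate Lemma \ref{Lemma2.4} (applied with $z=\zeta_{p^k}^{p^{(j-1)t}}$ and $a=b_j(\mathbf{x})$), interchanges the $c_j$- and $d_j$-sums, multiplies the $l-1$ factors out into a single sum over $\mathbf{c}\in\Z_{p^t}^{l-1}$, and only at the end cites Lemma \ref{Lemma2.7} to recognize the resulting product $\prod_{j=1}^{l-1}\bigl(\sum_{d_j\in\Z_{p^t}}\zeta_{p^t}^{-c_jd_j}\zeta_{p^k}^{p^{(j-1)t}d_j}\bigr)$ as $\gamma_{\mathbf{c}}$. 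You instead expand the whole block $\sum_{j=1}^{l-1}p^{(j-1)t}b_j(\mathbf{x})$ in one shot by evaluating Lemma \ref{Lemma2.6} at the vector $(b_1(\mathbf{x}),\dots,b_{l-1}(\mathbf{x}))$, which hands you the $\gamma_{\mathbf{c}}$-expansion with coefficients $\zeta_{p^t}^{\mathbf{c}\cdot\mathbf{b}(\mathbf{x})}$ directly; the remaining steps (folding in $\zeta_{p^t}^{b_l(\mathbf{x})}$ via $p^{(l-1)t}=p^{k-t}$, substituting, swapping the finite sums over $\mathbf{x}$ and $\mathbf{c}$) coincide with the paper's. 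The two mechanisms are equivalent in content, since Lemma \ref{Lemma2.7} is precisely the statement that $\gamma_{\mathbf{c}}$ factors into the univariate sums the paper's computation produces, so your appeal to Lemma \ref{Lemma2.6} is legitimate and shortens the manipulation by one layer of product-expansion bookkeeping; the paper's version, in exchange, stays self-contained at the level of single-coordinate expansions. The index checks you flag ($t(l-1)=k-t$, $\zeta_{p^k}^{p^{(l-1)t}}=\zeta_{p^t}$, and that exponents of $\zeta_{p^t}$ only matter modulo $p^t$ so that $b_l+\sum_{j=1}^{l-1}c_jb_j$ may be read in $\mathcal{GB}_n^{p^t}$) are exactly the points requiring care, and you have them right.
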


\begin{proof}
By the definition of generalized Walsh-Hadamard transform, we have
\begin{eqnarray*}
p^{\frac{n}{2}} \mathcal{H}_f(\mathbf{u})&=& \sum \limits_{\mathbf{x} \in \Z_p^n} \zeta_p^{-\mathbf{u}\cdot\mathbf{x}} \zeta_{p^k}^{f(\mathbf{x})}
=\sum \limits_{\mathbf{x} \in \Z_p^n} \zeta_p^{-\mathbf{u}\cdot\mathbf{x}} \zeta_{p^t}^{b_l(\mathbf{x})} \prod \limits_{j=1}^{l-1} \zeta_{p^k}^{p^{(j-1)t}b_j(\mathbf{x})}\\
&=&\sum \limits_{\mathbf{x} \in \Z_p^n} \zeta_p^{-\mathbf{u}\cdot\mathbf{x}} \zeta_{p^t}^{b_l(\mathbf{x})} \prod \limits_{j=1}^{l-1}  \frac{1}{p^t}
\left( \sum \limits_{d_j \in \Z_{p^t}}  \left( \sum \limits_{c_j \in \Z_{p^t}} \zeta_{p^t}^{c_j(b_j(\mathbf{x})-d_j)} \right) \zeta_{p^k}^{p^{(j-1)t}d_j} \right)\\
&=&\frac{1}{p^{k-t}} \sum \limits_{\mathbf{x} \in \Z_p^n} \zeta_p^{-\mathbf{u}\cdot\mathbf{x}} \zeta_{p^t}^{b_l(\mathbf{x})} \prod \limits_{j=1}^{l-1}
\left( \sum \limits_{c_j \in \Z_{p^t}}  \left( \sum \limits_{d_j \in \Z_{p^t}} \zeta_{p^t}^{-c_jd_j} \zeta_{p^k}^{p^{(j-1)t}d_j}  \right)\zeta_{p^t}^{c_jb_j(\mathbf{x})} \right)\\
&=&\frac{1}{p^{k-t}} \sum \limits_{\mathbf{x} \in \Z_p^n} \zeta_p^{-\mathbf{u}\cdot\mathbf{x}} \zeta_{p^t}^{b_l(\mathbf{x})}  \sum \limits_{\mathbf{c} \in \Z_{p^t}^{l-1}} \zeta_{p^t}^{\sum_{j=1}^{l-1}c_jb_j(\mathbf{x})}
\prod \limits_{j=1}^{l-1}   \left( \sum \limits_{d_j \in \Z_{p^t}} \zeta_{p^t}^{-c_jd_j} \zeta_{p^k}^{p^{(j-1)t}d_j}  \right)\\
&=&\frac{1}{p^{k-t}} \sum \limits_{\mathbf{c} \in \Z_{p^t}^{l-1}} \sum \limits_{\mathbf{x} \in \Z_p^n} \zeta_p^{-\mathbf{u}\cdot\mathbf{x}}  \zeta_{p^t}^{b_l(\mathbf{x})+\sum_{j=1}^{l-1}c_jb_j(\mathbf{x})}
\prod \limits_{j=1}^{l-1}   \left( \sum \limits_{d_j \in \Z_{p^t}} \zeta_{p^t}^{-c_jd_j} \zeta_{p^k}^{p^{(j-1)t}d_j}  \right)\\
&=&\frac{p^{\frac{n}{2}}}{p^{k-t}} \sum \limits_{\mathbf{c} \in \Z_{p^t}^{l-1}}  \mathcal{H}_{b_l+\sum_{j=1}^{l-1}c_jb_j} \gamma_{\mathbf{c}}.
\end{eqnarray*}
Note that the last equality holds by Lemma \ref{Lemma2.7}.  Then the result follows.
\end{proof}

\begin{thm}
\label{thm3.1.03}
Let $k=lt$ and $f \in \mathcal{GB}_n^{p^k}$ be defined as $f(\mathbf{x})=\sum\limits_{i=1}^{l}p^{(i-1)t}b_i(\mathbf{x})$,
where $b_i \in \mathcal{GB}_n^{p^t}$, $1 \leq i \leq l$.  Then $f$ is gbent if and only if for any $\mathbf{u} \in \Z_p^n$
and $\mathbf{c }\in \Z_{p^t}^{l-1}$, there exists some  $\mathbf{d }\in \Z_{p^t}^{l-1}$ and $j \in \Z_{p^{t}}$ such that

\begin{eqnarray*}
\mathcal{H}_{b_l+\sum_{j=1}^{l-1}c_jb_j}(\mathbf{u})=\left\{
\begin{array}{l}
\pm  \zeta_{p^t}^{j+\mathbf{c} \cdot \mathbf{d}} ~~~~~~~~~~~~$if$~~$n$~~is~~even~~or~~$n$~~is ~~odd~~and~~p \equiv 1(mod~4),\\
\pm \sqrt{-1} \zeta_{p^t}^{j+\mathbf{c} \cdot \mathbf{d}} ~~~~ $if$~~ $n$~~is ~~odd~~and~~p \equiv 3(mod~4),
\end{array}
\right.
\end{eqnarray*}
where $b_l+\sum_{j=1}^{l-1}c_jb_j \in \mathcal{GB}_n^{p^{t}}$ for every $\mathbf{c} \in \Z_{p^t}^{l-1}$, and $\mathbf{d}$ and $j$ only depend on $\mathbf{u}$ and $f$.
\end{thm}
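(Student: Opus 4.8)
The plan is to follow the same strategy as in the proof of Theorem \ref{thm3.1.01}, but with the one-dimensional index set $\Z_{p^t}$ replaced by $\Z_{p^t}^{l-1}$ and with Lemma \ref{Lemma3.1.02} playing the role of Lemma \ref{Lemma3.1.01}. The two inputs that make everything work are: first, the expansion
\[\mathcal{H}_f(\mathbf{u})=\frac{1}{p^{k-t}}\sum_{\mathbf{c}\in\Z_{p^t}^{l-1}}\mathcal{H}_{b_l+\sum_{j=1}^{l-1}c_jb_j}(\mathbf{u})\,\gamma_{\mathbf{c}}\]
from Lemma \ref{Lemma3.1.02}; and second, the fact from Remark \ref{Remark2.4} that, since $k=lt$, the family $\{\gamma_{\mathbf{c}}\mid \mathbf{c}\in\Z_{p^t}^{l-1}\}$ is a basis of $\Q(\zeta_{p^k})$ over $\Q(\zeta_{p^t})$ (and of $\Q(\zeta_{p^k},\sqrt{-1})$ over $\Q(\zeta_{p^t},\sqrt{-1})$). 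Since every $b_l+\sum_{j=1}^{l-1}c_jb_j$ lies in $\mathcal{GB}_n^{p^t}$, each coefficient $\mathcal{H}_{b_l+\sum_{j=1}^{l-1}c_jb_j}(\mathbf{u})$ lies in $\Q(\zeta_{p^t})$ (resp. $\Q(\zeta_{p^t},\sqrt{-1})$), so the expansion above is genuinely an expression in this basis and its coefficients are uniquely determined.

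For the necessity direction I would first treat the case $n$ even, or $n$ odd with $p\equiv 1\pmod 4$. By Lemma \ref{Lemma2.3}, $\mathcal{H}_f(\mathbf{u})=\pm\zeta_{p^k}^{i}$ for some $0\le i\le p^k-1$. I would split $i=e+jp^{(l-1)t}$ with $0\le j\le p^t-1$ and $0\le e\le p^{k-t}-1$, so that $\zeta_{p^k}^{i}=\zeta_{p^t}^{j}\zeta_{p^k}^{e}$ using $\zeta_{p^t}=\zeta_{p^k}^{p^{(l-1)t}}$. Writing $e=\sum_{j'=1}^{l-1}d_{j'}p^{(j'-1)t}$ in base $p^t$ and applying Lemma \ref{Lemma2.6} with $\mathbf{d}=(d_1,\dots,d_{l-1})$ gives $\zeta_{p^k}^{e}=\frac{1}{p^{k-t}}\sum_{\mathbf{c}}\zeta_{p^t}^{\mathbf{c}\cdot\mathbf{d}}\gamma_{\mathbf{c}}$, whence $\mathcal{H}_f(\mathbf{u})=\frac{1}{p^{k-t}}\sum_{\mathbf{c}}(\pm\zeta_{p^t}^{j+\mathbf{c}\cdot\mathbf{d}})\gamma_{\mathbf{c}}$. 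Comparing this with the expansion from Lemma \ref{Lemma3.1.02} and invoking the uniqueness of coefficients in the basis $\{\gamma_{\mathbf{c}}\}$ forces $\mathcal{H}_{b_l+\sum_{j=1}^{l-1}c_jb_j}(\mathbf{u})=\pm\zeta_{p^t}^{j+\mathbf{c}\cdot\mathbf{d}}$, with a single sign, $j$ and $\mathbf{d}$ (read off from $i$, hence depending only on $\mathbf{u}$ and $f$) valid for every $\mathbf{c}$. The case $n$ odd with $p\equiv 3\pmod 4$ is identical after carrying a factor $\sqrt{-1}$, working over $\Q(\zeta_{p^t},\sqrt{-1})$ and using Lemma \ref{Lemma2.2}(ii) together with the second assertion of Remark \ref{Remark2.4}.

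For the sufficiency direction I would simply reverse the computation: assuming $\mathcal{H}_{b_l+\sum_{j=1}^{l-1}c_jb_j}(\mathbf{u})=\pm\zeta_{p^t}^{j+\mathbf{c}\cdot\mathbf{d}}$ for all $\mathbf{c}$ with the sign, $j$ and $\mathbf{d}$ independent of $\mathbf{c}$, I substitute into Lemma \ref{Lemma3.1.02} to obtain $\mathcal{H}_f(\mathbf{u})=\pm\frac{1}{p^{k-t}}\zeta_{p^t}^{j}\sum_{\mathbf{c}}\zeta_{p^t}^{\mathbf{c}\cdot\mathbf{d}}\gamma_{\mathbf{c}}$, and then Lemma \ref{Lemma2.6} collapses the sum to $p^{k-t}\zeta_{p^k}^{e}$, giving $\mathcal{H}_f(\mathbf{u})=\pm\zeta_{p^t}^{j}\zeta_{p^k}^{e}=\pm\zeta_{p^k}^{e+jp^{(l-1)t}}$, which has absolute value $1$; hence $f$ is gbent. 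Once more the odd case with $p\equiv 3\pmod 4$ differs only by the $\sqrt{-1}$ factor.

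The main obstacle, and the point needing most care, is the coefficient-comparison step: it is legitimate only because each component coefficient $\mathcal{H}_{b_l+\sum_{j=1}^{l-1}c_jb_j}(\mathbf{u})$ genuinely lies in the base field $\Q(\zeta_{p^t})$ (resp. $\Q(\zeta_{p^t},\sqrt{-1})$), which is exactly where the hypothesis $b_i\in\mathcal{GB}_n^{p^t}$ enters, and because $\{\gamma_{\mathbf{c}}\}$ is an honest basis by Remark \ref{Remark2.4}. A secondary bookkeeping subtlety is to confirm that the recovered $j$ and $\mathbf{d}$, as well as the sign, depend only on $\mathbf{u}$ and $f$ and not on the auxiliary vector $\mathbf{c}$; this is automatic, since they are determined by the single exponent $i$ and single sign in the expression $\mathcal{H}_f(\mathbf{u})=\pm\zeta_{p^k}^{i}$ furnished by Lemma \ref{Lemma2.3}.
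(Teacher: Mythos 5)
Your proposal is correct and follows essentially the same route as the paper: the expansion of $\mathcal{H}_f(\mathbf{u})$ from Lemma \ref{Lemma3.1.02}, the form $\pm\zeta_{p^k}^{i}$ (resp. $\pm\sqrt{-1}\,\zeta_{p^k}^{i}$) from Lemma \ref{Lemma2.3}, the re-expansion of $\zeta_{p^k}^{e}$ via Lemma \ref{Lemma2.6}, and coefficient comparison in the basis $\{\gamma_{\mathbf{c}}\}$ of $\Q(\zeta_{p^k})$ over $\Q(\zeta_{p^t})$ (resp. over $\Q(\zeta_{p^t},\sqrt{-1})$) guaranteed by Remark \ref{Remark2.4}; your split $i=e+jp^{(l-1)t}$ is exactly the paper's $d=i-jp^{k-t}$. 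Your only departure is that you spell out the sufficiency direction by reversing the computation, which the paper leaves implicit.
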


\begin{proof}
If $n$ is even or $n$ is odd and $p\equiv 1~(mod~4)$, then $\mathcal{H}_f{(\textbf{u})}= \pm \zeta_{p^k}^i$, for some $0 \leq i \leq p^k-1$,
by Lemma \ref{Lemma2.3}.
Hence, $\mathcal{H}_f{(\textbf{u})}$ can be expressed as $\mathcal{H}_f{(\textbf{u})}= \pm \zeta_{p^{t}}^j \zeta_{p^k}^{i-jp^{k-t}}$,
where $0 \leq j \leq p^{t}-1$, and let $d=i-jp^{k-t}$, $0 \leq d \leq p^{k-t}-1$. Further, note that $k=lt$, $d$ can be expressed by
$d=\sum_{j=1}^{l-1}p^{(j-1)t}d_j$. We denote $\mathbf{d}=(d_1,\dots, d_{l-1}) \in \Z_{p^t}^{l-1}$.

According to Lemma \ref{Lemma3.1.02}, we have
\begin{eqnarray}
\label{3.1.05}
\mathcal{H}_f{(\textbf{u})}&=& \frac{1}{p^{k-t}} \sum \limits_{\mathbf{c} \in \Z_{p^t}^{l-1}} \mathcal{H}_{b_l+\sum_{j=1}^{l-1}c_jb_j}(\mathbf{u}) \gamma_{\mathbf{c}},
\end{eqnarray}
where $\gamma_{\mathbf{c}}=\sum \limits_{\mathbf{d} \in \Z_{p^t}^{l-1}} \zeta_{p^t}^{-\mathbf{c}\cdot \mathbf{d}} \zeta_{p^k}^{\sum_{j=1}^{l-1}p^{(j-1)t}d_j}$.
By  Lemma \ref{Lemma2.6},  we have
\[\zeta_{p^k}^{d}=\frac{1}{p^{k-t}} \sum \limits_{c\in \Z_{p^t}^{l-1}} \zeta_{p^t}^{\mathbf{c} \cdot \mathbf{d}} \gamma_{\mathbf{c}}.\]
Therefore,
\begin{eqnarray}
\label{3.1.06}
\begin{split}
\mathcal{H}_f{(\textbf{u})}&= \pm \zeta_{p^{t}}^j \zeta_{p^k}^{i-jp^{k-t}}=\pm \zeta_{p^{t}}^j \zeta_{p^k}^{d}\\
&= \pm \frac{1}{p^{k-t}}\zeta_{p^{t}}^j \sum \limits_{c\in \Z_{p^t}^{l-1}} \zeta_{p^t}^{\mathbf{c} \cdot \mathbf{d}} \gamma_{\mathbf{c}}.
\end{split}
\end{eqnarray}
Combining  (\ref{3.1.05}) with (\ref{3.1.06}),  we can get
\begin{eqnarray}
\label{3.1.07}
\begin{split}
\mathcal{H}_f{(\textbf{u})}&=\frac{1}{p^{k-t}} \sum \limits_{\mathbf{c} \in \Z_{p^t}^{l-1}}  \mathcal{H}_{b_l+\sum_{j=1}^{l-1}c_jb_j} \gamma_{\mathbf{c}}\\
&=\pm \frac{1}{p^{k-t}}\zeta_{p^{t}}^j \sum \limits_{c\in \Z_{p^t}^{l-1}} \zeta_{p^t}^{\mathbf{c} \cdot \mathbf{d}} \gamma_{\mathbf{c}}.
\end{split}
\end{eqnarray}
Since $b_l+\sum_{j=1}^{l-1}c_jb_j \in \mathcal{GB}_n^{p^{t}}$, it is absolutely that $\mathcal{H}_{b_l+\sum_{j=1}^{l-1}c_jb_j}(\mathbf{u}) \in \Q(\zeta_{p^{t}})$
in the case of $n$ is even or $n$  is odd and $p\equiv 1~(mod~4)$. We note that $\{1,\zeta_{p^k},\dots, \zeta_{p^k}^{p^{k-t}-1}\}$
is a basis of $\Q(\zeta_{p^k})$ over $\Q(\zeta_{p^{t}})$ by condition (i) of Lemma \ref{Lemma2.5}. So, $\{\gamma_{\mathbf{c}}|c\in \Z_{p^t}^{l-1}\}$
is also a basis of $\Q(\zeta_{p^k})$ over $\Q(\zeta_{p^{t}})$ by  Remark \ref{Remark2.4}.

Based on the above  discussions and (\ref{3.1.07}), we can get
\[\mathcal{H}_{b_l+\sum_{j=1}^{l-1}c_jb_j}(\mathbf{u})= \pm  \zeta_{p^t}^{j+\mathbf{c} \cdot \mathbf{d}}.\]

If $n$ is odd and $p\equiv 3~(mod~4)$, then $\mathcal{H}_f{(\textbf{u})}= \pm \sqrt{-1}\zeta_{p^k}^i$, for some $0 \leq i \leq p^k-1$,
by Lemma \ref{Lemma2.3}. Note that in this case, by  Lemma \ref{Lemma2.2} (ii), we have
\[\mathcal{H}_{b_l+\sum_{j=1}^{l-1}c_jb_j}(\mathbf{u}) \in \Q(\zeta_{4p^{t}}) \setminus \Q(\zeta_{2p^{t}})  \subseteq \Q(\zeta_{p^{t}},\sqrt{-1}).\]
Similar as before, we can get
\[\mathcal{H}_{b_l+\sum_{j=1}^{l-1}c_jb_j}(\mathbf{u})=\pm \sqrt{-1} \zeta_{p^t}^{j+\mathbf{c} \cdot \mathbf{d}}.\]
\end{proof}

\begin{Remark}
Theorem \ref{thm3.1.03} generalizes \cite[Theorem 3.2]{Wang}.
\end{Remark}

\section{$\Z_{p^k}$-bent functions, vectorial bent functions and relative difference sets}
\label{sec4}

We recall that a $\Z_{p^k}$-bent function is a function from $\Z_{p}^n$ to $\Z_{p^k}$, for which
\begin{eqnarray*}
\mathcal{H}_f(a,\mathbf{u})=p^{-\frac{n}{2}} \sum\limits_{\mathbf{x}\in \Z_{p}^n} \zeta_{p^k}^{af(\mathbf{x})} \zeta_p^{-\mathbf{u}\cdot \mathbf{x}}
\end{eqnarray*}
has absolute value $1$ for every $\mathbf{u} \in \Z_p^n$ and nonzero $a \in \Z_{p^k}$. Firstly, we have the following proposition
for $\Z_{p^k}$-bent function.

\begin{Proposition}
\label{Proposition4.01}
A function $f(\mathbf{x})=\sum_{i=0}^{k-1}p^{i} a_i(\mathbf{x}) \in \mathcal{GB}_n^{p^k}$ is $\Z_{p^k}$-bent if and only if
$p^tf(\mathbf{x})=\sum_{i=0}^{k-t-1}p^{t+i} a_i(\mathbf{x})$ is a gbent function for every $0 \leq t \leq k-1$.
\end{Proposition}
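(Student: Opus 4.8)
The plan is to reduce the $\Z_{p^k}$-bent condition, which quantifies over \emph{all} nonzero $a$, to the gbent conditions for the functions $p^t f$, which correspond only to the characters $a=p^t$. First I would pass to the unnormalized character sums $W_f(a,\mathbf{u}) := p^{n/2}\mathcal{H}_f(a,\mathbf{u}) = \sum_{\mathbf{x}\in\Z_p^n}\zeta_{p^k}^{af(\mathbf{x})}\zeta_p^{-\mathbf{u}\cdot\mathbf{x}} \in \Z[\zeta_{p^k}]=\mathcal{O}_K$, so that $|\mathcal{H}_f(a,\mathbf{u})|=1$ becomes $W_f(a,\mathbf{u})\overline{W_f(a,\mathbf{u})}=p^n$. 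Working in the ring of integers avoids any difficulty with the factor $p^{-n/2}$, whose behaviour under Galois automorphisms depends on $p\bmod 4$ (Lemma~\ref{Lemma2.2}).

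The arithmetic core is the observation that every nonzero $a\in\Z_{p^k}$ factors as $a=p^t m$ with $t=v_p(a)\in\{0,\dots,k-1\}$ and $\gcd(m,p)=1$, and that multiplying the exponent by the unit $m$ is realised by the Galois automorphism $\tau_m\in\mathrm{Gal}(K/\Q)$ sending $\zeta_{p^k}\mapsto\zeta_{p^k}^m$ (Lemma~\ref{Lemma2.1}(i)). Applying $\tau_m$ to $W_f(p^t,\mathbf{u})$ and using $\tau_m(\zeta_p)=\zeta_p^m$ gives the key identity $\tau_m\!\left(W_f(p^t,\mathbf{u})\right)=W_f(p^t m, m\mathbf{u})=W_f(a,m\mathbf{u})$. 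I would also note that since $\zeta_{p^k}^{p^t f(\mathbf{x})}=\zeta_{p^{k-t}}^{g_t(\mathbf{x})}$ with $g_t=\sum_{i=0}^{k-t-1}p^i a_i$, the quantity $W_f(p^t,\mathbf{u})$ is exactly the unnormalized Walsh--Hadamard transform of $p^t f$, so $p^t f$ is gbent precisely when $W_f(p^t,\mathbf{u})\overline{W_f(p^t,\mathbf{u})}=p^n$ for all $\mathbf{u}$.

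From here both implications are short. For the forward direction, if $f$ is $\Z_{p^k}$-bent then specialising to $a=p^t$ (i.e.\ $m=1$), which is nonzero for $0\le t\le k-1$, yields $|W_f(p^t,\mathbf{u})|^2=p^n$ for all $\mathbf{u}$, which is gbent-ness of $p^t f$. For the converse, fix a nonzero $a=p^t m$ and assume $p^t f$ is gbent. The decisive point is that the cyclotomic Galois group is abelian, so $\tau_m$ commutes with complex conjugation $\tau_{-1}$; hence $\overline{\tau_m(z)}=\tau_m(\bar z)$ and $\tau_m$ preserves the norm form $z\overline{z}$. Applying $\tau_m$ to $W_f(p^t,\mathbf{u})\overline{W_f(p^t,\mathbf{u})}=p^n$ and using that $\tau_m$ fixes the rational integer $p^n$ gives $|W_f(a,m\mathbf{u})|^2=p^n$; since $m$ is invertible modulo $p$, $m\mathbf{u}$ runs over all of $\Z_p^n$ as $\mathbf{u}$ does, so $|\mathcal{H}_f(a,\mathbf{v})|=1$ for every $\mathbf{v}$. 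Ranging over all $t$ and all units $m$ covers every nonzero $a$, giving $\Z_{p^k}$-bent-ness.

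The step I expect to be the main obstacle—and the one that makes the statement nontrivial—is bridging the quantifier gap: gbent-ness of $p^t f$ controls only the single character $a=p^t$, whereas $\Z_{p^k}$-bent-ness demands $|\mathcal{H}_f(a,\mathbf{u})|=1$ for all units $m$ in front of each $p^t$. The resolution is entirely the Galois-invariance argument above: because the absolute value squared equals the rational integer $p^n$, it is fixed by every $\tau_m$, and commutativity of $\mathrm{Gal}(K/\Q)$ transfers this invariance across the whole orbit $\{p^t m : \gcd(m,p)=1\}$. I would state the commutation $\tau_m\tau_{-1}=\tau_{-1}\tau_m$ explicitly, since it is exactly what upgrades $\tau_m(z\bar z)$ to $\tau_m(z)\overline{\tau_m(z)}$.
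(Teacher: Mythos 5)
Your proof is correct and follows essentially the same route as the paper: both directions specialize to $a=p^t$ for gbent-ness and then use the Galois automorphism $\zeta_{p^k}\mapsto\zeta_{p^k}^m$ (the paper's $\sigma$ with $\sigma(\zeta_{p^{k-t}})=\zeta_{p^{k-t}}^z$) to carry the unit-modulus condition at $a=p^t$ to every $a=p^t m$ with $\gcd(m,p)=1$, observing that $\mathbf{u}\mapsto m\mathbf{u}$ permutes $\Z_p^n$. Your explicit justification that the Galois action preserves $z\bar z=p^n$ --- via $\overline{\tau_m(z)}=\tau_m(\bar z)$, i.e.\ commutativity of the cyclotomic Galois group with complex conjugation --- makes rigorous a step the paper asserts without comment, but it is the same argument in substance.
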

\begin{proof}
If $f$ is $\Z_{p^k}$-bent, then $|\mathcal{H}_f^{(p^k)}(p^t,\mathbf{u})|=|\mathcal{H}_{p^t f}^{(p^k)}(\mathbf{u})|=1$ for every $\mathbf{u} \in \Z_p^n$ and $0 \leq t \leq k-1$
by definition.

For the converse, let $a=p^tz$, for $0 \leq t \leq k-1$ and $(z,p)=1$. We have to show that $|\mathcal{H}_f^{(p^k)}(a,\mathbf{u})|=1$ for all $\mathbf{u} \in \Z_p^n$.
Let $f_t(\mathbf{x})=\sum_{i=0}^{k-t-1}p^{i} a_i(\mathbf{x})$. By the assumption, for all $\mathbf{u} \in \Z_p^n$,
\begin{eqnarray*}
\mathcal{H}_f^{(p^k)}(p^t,\mathbf{u})&=&p^{-\frac{n}{2}}\sum\limits_{\mathbf{x}\in \Z_{p}^n} \zeta_{p^k}^{\sum_{i=0}^{k-t-1}p^{t+i}a_i(\mathbf{x})} \zeta_p^{-\mathbf{u}\cdot \mathbf{x}}\\
&=&p^{-\frac{n}{2}}\sum\limits_{\mathbf{x}\in \Z_{p}^n} \zeta_{p^{k-t}}^{\sum_{i=0}^{k-t-1}p^{i}a_i(\mathbf{x})} \zeta_p^{-\mathbf{u}\cdot \mathbf{x}}\\
&=&\mathcal{H}_{f_t}^{(p^{k-t})}(\mathbf{u}),
\end{eqnarray*}
which has absolute value $1$.

Let $\sigma \in Gal(\Q(\zeta_{p^{k-t}}) / \Q)$ and $\sigma(\zeta_{p^{k-t}})=\zeta_{p^{k-t}}^z$. Then we have
\begin{eqnarray}
\label{4.01}
\sigma\left( p^{\frac{n}{2}} \mathcal{H}_{f_t}^{(p^{k-t})}(\mathbf{u})\right)=p^{\frac{n}{2}} \mathcal{H}_{f_t}^{(p^{k-t})}(z,\tilde{z}\mathbf{u}),
\end{eqnarray}
where $z\equiv \tilde{z}~(mod~p)$. Note that $(z,p)=(\tilde{z},p)=1$,  as $\mathbf{u}$ runs through the elements of $\Z_p^n$, so does $\tilde{z}\mathbf{u}$.
It is easy to verify that $\mathcal{H}_f^{(p^k)}(p^tz,\mathbf{u})=\mathcal{H}_{f_t}^{(p^{k-t})}(z,\mathbf{u})$.  Further, by the assumption
$p^tf(\mathbf{x})=\sum_{i=0}^{k-t-1}p^{t+i} a_i(\mathbf{x})$ is  gbent and (\ref{4.01}),  we can get
$|\mathcal{H}_f^{(p^k)}(p^tz,\mathbf{u})|= | \mathcal{H}_{f_t}^{(p^{k-t})}(z,\mathbf{u}) | =1$ for every $\mathbf{u} \in \Z_p^n$.
\end{proof}

\begin{Remark}
\label{Remark4.01}
Note that $p^tf(\mathbf{x})=\sum_{i=0}^{k-t-1}p^{t+i} a_i(\mathbf{x})$ is a gbent function in $\mathcal{GB}_n^{p^k}$
if and only if $f_t(\mathbf{x})=\sum_{i=0}^{k-t-1}p^{i} a_i(\mathbf{x})$ is  gbent  in $\mathcal{GB}_n^{p^{k-t}}$.
For a $\Z_{p^k}$-bent function in $\mathcal{GB}_n^{p^k}$,  by Proposition \ref{Proposition4.01}, we know that
\begin{eqnarray*}
f(\mathbf{x})&=& a_0(\mathbf{x})+pa_1(\mathbf{x})+\cdots+p^{k-1}a_{k-1}(\mathbf{x}) \in \mathcal{GB}_n^{p^k},\\
f_1(\mathbf{x})&=& a_0(\mathbf{x})+pa_1(\mathbf{x})+\cdots+p^{k-2}a_{k-2}(\mathbf{x}) \in \mathcal{GB}_n^{p^{k-1}},\\
&\vdots&\\
f_{k-1}(\mathbf{x})&=& a_0(\mathbf{x})\in \mathcal{GB}_n^{p}
\end{eqnarray*}
are gbent functions ($f_{k-1}$ is $p$-ary bent). Further, by Corollary \ref{Corollary3.1.03}, we can get
$\langle a_0,a_1,\cdots,a_{k-1}\rangle$ is a vector space of bent functions over $\Z_p$, i.e., a vectorial bent function.
\end{Remark}

In addition to applications in cryptography, one motivation for considering bent functions is their relation to objects
in combinatorics. In the following, we point out a relation between relative difference set and $\Z_{p^k}$-bent functions.
Firstly, we recall the definition of a relative difference set. Let $G$ be a group of order $uv$, Let $N$ be a subgroup
of  $G$ of order $v$ and let $R$  be a subset of $G$ of cardinality $k$. Then $R$ is called a $(u,v,k,\lambda)$-relative
difference set of $G$  relative to $N$, if every element $g \in G \setminus N$ can be represented in exactly $\lambda$
ways as difference $r_1-r_2$, $r_1,r_2 \in R$, and no nonzero element of $N$ has such a representation.

Relative difference set can be described by characters as follows.

\begin{Proposition}
[see \cite{Tan}]
\label{Proposition4.02}
Let $G$ be a group order $uv$ and Let $N$ be a subgroup of  $G$ of order $v$. A subset $R$ with  cardinality $k$ of $G$
is a $(u,v,k,\lambda)$-relative difference set of $G$  relative to $N$ if and only if for every character $\chi$ of $G$
\begin{eqnarray*}
|\chi(R)|^2=\left\{
\begin{array}{l}
k^2~~~~~~~~~~~~if~\chi=\chi_0,\\
k-\lambda v ~~~~if~\chi \neq \chi_0, ~but~\chi(g)=1~for~all~g\in N,\\
k~~~~~~~~~~~~~otherwise,
\end{array}
\right.
\end{eqnarray*}
where $\chi(R) =\sum_{x\in R}\chi(x)$.
\end{Proposition}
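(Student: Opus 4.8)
The plan is to recast the combinatorial condition as a single identity in the group algebra $\mathbb{C}[G]$ and then pass to characters. For a subset $S \subseteq G$ write $\underline{S} = \sum_{s \in S} s \in \mathbb{Z}[G]$, and let $\underline{R}^{(-1)} = \sum_{r \in R}(-r)$. Expanding $\underline{R}\,\underline{R}^{(-1)} = \sum_{r_1,r_2 \in R}(r_1 - r_2)$, the coefficient of a group element $g$ counts the number of representations $g = r_1 - r_2$; hence the defining property of a $(u,v,k,\lambda)$-relative difference set is precisely the equation $\underline{R}\,\underline{R}^{(-1)} = k\cdot 0_G + \lambda(\underline{G} - \underline{N})$ in $\mathbb{Z}[G]$, the coefficient of $0_G$ being $k$ (only the pairs $r_1 = r_2$ contribute) and the coefficients over $N \setminus \{0_G\}$ and $G \setminus N$ being $0$ and $\lambda$ respectively. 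For the forward implication I would extend each character $\chi$ linearly to $\mathbb{C}[G]$; since $|\chi(r)| = 1$ we have $\chi(\underline{R}^{(-1)}) = \overline{\chi(\underline{R})}$, so applying $\chi$ to both sides turns the left side into $|\chi(R)|^2$. Evaluating the right side by orthogonality of characters — $\chi(\underline{G})$ equals $uv$ or $0$ according as $\chi = \chi_0$ or not, and $\chi(\underline{N})$ equals $v$ or $0$ according as $\chi$ is trivial on $N$ or not — reads off the three stated cases.

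For the converse the plan is Fourier inversion. Set $T = \underline{R}\,\underline{R}^{(-1)}$ and $W = k\cdot 0_G + \lambda(\underline{G} - \underline{N})$. Checking the character values of $W$ against the hypothesis shows $\chi(T) = |\chi(R)|^2 = \chi(W)$ for every \emph{nontrivial} $\chi$ (the two nontrivial cases match the values $k - \lambda v$ and $k$ computed above). Since $G$ is abelian, its characters separate the elements of $\mathbb{C}[G]$, so an element all of whose nontrivial character values vanish must be a scalar multiple of $\underline{G}$; thus $T - W = \mu\,\underline{G}$ for some $\mu$. Comparing the coefficient of $0_G$ on both sides — it is $k$ in $T$ and $k$ in $W$, while it is $\mu$ in $\mu\,\underline{G}$ — forces $\mu = 0$, hence $T = W$, which is exactly the relative difference set condition read off coefficientwise.

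The main obstacle is the behaviour at the trivial character $\chi_0$. The stated value $|\chi_0(R)|^2 = k^2$ is automatic, since $\chi_0(R) = |R| = k$, and therefore carries no usable information; note also that $\chi_0(W) = k + \lambda v(u-1)$, which agrees with $k^2$ only by virtue of the standard parameter relation $k(k-1) = \lambda v(u-1)$. This is exactly why I would not try to match $T$ and $W$ directly at $\chi_0$, but instead isolate $T - W$ as a multiple of $\underline{G}$ and determine the multiple from the identity coefficient; the parameter relation then drops out as a consequence rather than being assumed. I would also record at the start that $G$ is taken abelian — as is implicit in phrasing everything through ordinary (one-dimensional) characters — since this is what legitimizes the separation-of-points/Fourier-inversion step; for the intended group $G = \Z_p^n \times \Z_{p^k}$ this is satisfied.
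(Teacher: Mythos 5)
Your proof is correct, but there is nothing in the paper to compare it against: Proposition 4.2 is quoted from the reference [Tan, Pott, Feng] and the paper supplies no proof of its own. What you have written is the standard group-ring argument from the difference-set literature, and all the steps check out. The translation of the relative-difference-set condition into the identity $\underline{R}\,\underline{R}^{(-1)} = k\cdot 0_G + \lambda(\underline{G}-\underline{N})$ in $\Z[G]$ is exactly right, and applying characters with the orthogonality relations $\chi(\underline{G})\in\{uv,0\}$, $\chi(\underline{N})\in\{v,0\}$ gives the three cases of the forward implication. Your converse is also sound, and you handle the one genuinely delicate point well: the value $k^2$ at $\chi_0$ carries no information (it is just $|R|^2$), so one cannot hope to match $T$ and $W$ at the trivial character directly; isolating $T-W$ as $\mu\,\underline{G}$ via Fourier inversion and killing $\mu$ by comparing identity coefficients is the clean way around this, and it correctly recovers the parameter identity $k(k-1)=\lambda v(u-1)$ as a by-product rather than an assumption. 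Your remark that $G$ must be abelian for one-dimensional characters to separate points is also a necessary hypothesis that the paper's statement leaves implicit (it holds for the intended $G=\Z_p^n\times\Z_{p^k}$); for non-abelian $G$ one would need all irreducible representations, not just linear characters. In short: correct, complete, and it fills a gap the paper delegates to a citation.
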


\begin{thm}
\label{thm4.01}
Let $f:\Z_p^n \rightarrow \Z_{p^k}$ be a $\Z_{p^k}$-bent function. Then the graph $R=\{(\mathbf{x},f(\mathbf{x})):\mathbf{x} \in \Z_p^n\}$
is a $(p^n,p^k,p^n,p^{n-k})$-relative difference set of $G=\Z_p^n \times \Z_{p^k}$  relative to $N=\{0\} \times \Z_{p^k}$.
\end{thm}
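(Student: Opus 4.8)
The plan is to use the character-theoretic criterion for relative difference sets from Proposition~\ref{Proposition4.02}, applied to the group $G=\Z_p^n \times \Z_{p^k}$ with the distinguished subgroup $N=\{0\} \times \Z_{p^k}$. Here $|G|=p^n \cdot p^k$, so $u=p^n$ and $v=p^k$, the set $R$ has cardinality $|R|=p^n$, and the claimed parameter is $\lambda=p^{n-k}$. The first step is to enumerate the characters $\chi$ of $G$. Since $G=\Z_p^n \times \Z_{p^k}$, every character has the form $\chi_{\mathbf{u},a}(\mathbf{x},y)=\zeta_p^{\mathbf{u}\cdot \mathbf{x}}\zeta_{p^k}^{ay}$ for some $\mathbf{u}\in \Z_p^n$ and $a\in \Z_{p^k}$. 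Evaluating on $R$ gives
\begin{eqnarray*}
\chi_{\mathbf{u},a}(R)=\sum_{\mathbf{x}\in \Z_p^n}\zeta_p^{\mathbf{u}\cdot\mathbf{x}}\zeta_{p^k}^{af(\mathbf{x})},
\end{eqnarray*}
which up to the normalization $p^{-n/2}$ and the sign of $\mathbf{u}$ is exactly the transform $\mathcal{H}_f^{(p^k)}(a,-\mathbf{u})$ appearing in the definition of $\Z_{p^k}$-bentness. So the whole proof reduces to reading off $|\chi_{\mathbf{u},a}(R)|^2$ in the three cases of Proposition~\ref{Proposition4.02}.

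Next I would check the three cases. The trivial character is $\chi_0=\chi_{\mathbf{0},0}$, for which $\chi_0(R)=|R|=p^n$, so $|\chi_0(R)|^2=p^{2n}=k^2$ in the proposition's notation (with its ``$k$'' being our $|R|=p^n$); this matches the first line. The characters that are nonprincipal but trivial on $N$ are exactly those with $a=0$ and $\mathbf{u}\neq \mathbf{0}$, since $\chi_{\mathbf{u},a}$ is trivial on $N=\{0\}\times \Z_{p^k}$ precisely when $a=0$. For these, $\chi_{\mathbf{u},0}(R)=\sum_{\mathbf{x}}\zeta_p^{\mathbf{u}\cdot \mathbf{x}}=0$ because $\mathbf{u}\neq \mathbf{0}$ and the sum of a nontrivial additive character over $\Z_p^n$ vanishes; this gives $|\chi_{\mathbf{u},0}(R)|^2=0$, which must equal $|R|-\lambda v=p^n-\lambda p^k$, forcing $\lambda=p^{n-k}$ and confirming the claimed parameter. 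Finally, for the remaining characters, namely those with $a\neq 0$, I would invoke the $\Z_{p^k}$-bent hypothesis directly: $|\mathcal{H}_f^{(p^k)}(a,\mathbf{u})|=1$ for all $\mathbf{u}$ and all nonzero $a$ means $|\chi_{\mathbf{u},a}(R)|=p^{n/2}$, hence $|\chi_{\mathbf{u},a}(R)|^2=p^n=|R|$, matching the third line.

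The main subtlety, rather than obstacle, is the bookkeeping of the two different uses of the symbol $k$: the exponent $k$ in $\Z_{p^k}$ versus the cardinality parameter ``$k$'' in the generic relative-difference-set notation of Proposition~\ref{Proposition4.02}. One must be careful that the proposition's ``$k$'' is instantiated as $|R|=p^n$, and that the existence of $\Z_{p^k}$-bent functions implicitly requires $n\geq k$ so that $\lambda=p^{n-k}$ is a nonnegative-exponent integer. I would state this divisibility consistency check explicitly. Beyond that, since the correspondence between characters of $G$ and the transform values $\mathcal{H}_f^{(p^k)}(a,\mathbf{u})$ is direct and the three cases each collapse to a one-line computation (orthogonality of characters for $a=0$, the defining bent property for $a\neq 0$, and direct evaluation for the trivial character), the argument is essentially a verification. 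The conclusion is then immediate by the ``if'' direction of Proposition~\ref{Proposition4.02}.
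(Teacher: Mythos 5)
Your proposal is correct and follows essentially the same route as the paper's proof: both apply the character-theoretic criterion of Proposition~\ref{Proposition4.02} to $G=\Z_p^n \times \Z_{p^k}$ with $N=\{0\}\times\Z_{p^k}$, split into the three cases (trivial character, characters with $a=0$ and $\mathbf{u}\neq\mathbf{0}$, and characters with $a\neq 0$), and use the $\Z_{p^k}$-bent hypothesis exactly for the last case. Your explicit remarks on the clash of notation for the symbol $k$ and the implicit requirement $n\geq k$ are minor additions the paper leaves unstated, but the substance of the argument is identical.
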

\begin{proof}
We denote  the group of characters of $G$ by $\widehat{G}$. It is known that $\widehat{G} \cong \widehat{\Z_{p}^n} \times \widehat{\Z_{p^k}}$, so
$\widehat{G}=\{\chi_{\mathbf{u},a}: \chi_{\mathbf{u},a}(\mathbf{x},y)=\zeta_p^{-\mathbf{u} \cdot \mathbf{x}} \zeta_{p^k}^{ay}, for~all~ (\mathbf{x}, y) \in \Z_{p}^n \times \Z_{p^k} \}$.
For a character $\chi_{\mathbf{u},a}$, we have
\begin{eqnarray*}
|\chi_{\mathbf{u},a}(R)|^2&=&\sum \limits_{(\mathbf{x},f(\mathbf{x})) \in R} \zeta_{p^k}^{af(\mathbf{x})} \zeta_p^{-\mathbf{u} \cdot \mathbf{x}}
\sum \limits_{(\mathbf{y},f(\mathbf{y})) \in R} \overline{ \zeta_{p^k}^{af(\mathbf{y})} \zeta_p^{-\mathbf{u} \cdot \mathbf{y}} } \\
&=&\sum \limits_{\mathbf{x} \in \Z_{p}^n} \zeta_{p^k}^{af(\mathbf{x})} \zeta_p^{-\mathbf{u} \cdot \mathbf{x}}
\sum \limits_{{\mathbf{x} \in \Z_{p}^n} } \overline{ \zeta_{p^k}^{af(\mathbf{y})} \zeta_p^{-\mathbf{u} \cdot \mathbf{y}} } \\
&=&\left\{
\begin{array}{l}
p^{2n}~~~for~\chi_{\mathbf{0},0},\\
0~~~~~~for~\chi_{\mathbf{u},0},\mathbf{u} \neq \mathbf{0},\\
p^n~~~~otherwise.
\end{array}
\right.
\end{eqnarray*}
It is  easy to verify that the  last equality holds for $\chi_{\mathbf{0},0}$ and $\chi_{\mathbf{u},0},\mathbf{u} \neq \mathbf{0}$, and note that $f$
is $\Z_{p^k}$-bent, it also holds for other cases. By proposition  \ref{Proposition4.02}, we conclude the proof.
\end{proof}

We close this section with the following example, which gives a class of $\Z_{p^k}$-bent functions.  This class is defined via spreads.
We start by recalling that a spread of $\Z_{p}^{n}$, $n=2m$, is a family of $p^m+1$ subspaces $U_0,U_1,\dots,U_{p^m}$ of $\Z_{p}^{n}$,
whose pairwise intersection is trivial. The classical example is the regular spread, which for $\Z_p^{2m} \cong \F_{p^m} \times \F_{p^m}$
is represented by the family $S=\bigcup_{s\in \F_{p^m}}\{(x,sx):x\in \F_{p^m}\} \cup \{(0,y):y \in \F_{p^m}\}$.

\begin{Example}
\label{Example4.01}
Let $U_0,U_1, \dots,U_{p^m}$ be the elements of a spread of $\Z_{p}^{n}$, $n=2m$. We first construct a vectorial bent function $F$, and thereafter
a $\Z_{p^k}$-bent function $f$.

Let $\phi:\{1,2,\dots,p^{\frac{n}{2}}\} \rightarrow \Z_p^k$ be a balanced map, thus any $y \in \Z_p^k$ has exactly $p^{n/2-k}$ preimages in the set
$\{1,2,\dots,p^{\frac{n}{2}}\}$. Then the function $F:\Z_p^n \rightarrow \Z_p^k $ given by
\begin{eqnarray*}
F(\mathbf{x})=\left\{
\begin{array}{l}
\phi(s)~~~~~\mathbf{x}\in U_s,~1\leq s \leq p^m,~and~x\neq 0, \\
0~~~~~~~~~~\mathbf{x} \in U_0,
\end{array}
\right.
\end{eqnarray*}
is a vectorial bent function. The proof is similar with the even characteristic case, and we omit it here. The interest reader can refer to \cite[Example 2]{Hodzic1}.

We now construct the $\Z_{p^k}$-bent function. From the balanced map $\phi$,
we obtain in a natural way a balanced map $\bar{\phi}$ from $\{1,2,\dots,p^{\frac{n}{2}}\}$ to $\Z_{p^k}$ defined as
$\bar{\phi}(s)=y_0+py_1+\cdots+p^{k-1}y_{k-1}$ if $\phi(s)=(y_0,y_1,\cdots,y_{k-1})$. Then the function
\begin{eqnarray*}
f(\mathbf{x})=\left\{
\begin{array}{l}
\bar{\phi}(s)~~~~~\mathbf{x}\in U_s,~1\leq s \leq p^m,~and~x\neq 0, \\
0~~~~~~~~~~\mathbf{x} \in U_0,
\end{array}
\right.
\end{eqnarray*}
from $\Z_p^n$ to $\Z_{p^k}$ is $\Z_{p^k}$-bent.  We need  to prove that $|\mathcal{H}_f(c,\mathbf{u})|=1$ for every nonzero $c \in \Z_{p^k}$. Indeed,
\begin{eqnarray*}
\mathcal{H}_f(c,\mathbf{u})&=& p^{-\frac{n}{2}} \sum\limits_{\mathbf{x} \in \Z_p^n}\zeta_{p^k}^{cf(\mathbf{x})}\zeta_p^{-\mathbf{u} \cdot \mathbf{x}}\\
&=&\sum\limits_{s=1}^{p^m}   \sum\limits_{x\in U_s \setminus \{0\}} \zeta_{p^k}^{cf(\mathbf{x})}  \zeta_p^{-\mathbf{u} \cdot \mathbf{x}}
+ \sum\limits_{\mathbf{x}\in U_0}\zeta_p^{-\mathbf{u} \cdot \mathbf{x}}\\
&=&\sum\limits_{s=1}^{p^m}   \zeta_{p^k}^{c\bar{\phi}(s)}  \sum\limits_{x\in U_s}  \zeta_p^{-\mathbf{u} \cdot \mathbf{x}} - \sum\limits_{s=1}^{p^m} \zeta_{p^k}^{c\bar{\phi}(s)}
+ \sum\limits_{\mathbf{x}\in U_0}\zeta_p^{-\mathbf{u} \cdot \mathbf{x}}.
\end{eqnarray*}
Note that $\sum\limits_{s=1}^{p^m} \zeta_{p^k}^{c\bar{\phi}(s)}=0$ for all nonzero $c \in \Z_{p^k}$. Consequently for $\mathbf{u} \neq \mathbf{0}$, we have
\begin{eqnarray*}
\mathcal{H}_f(c,\mathbf{u})&=&\sum\limits_{s=1}^{p^m}   \zeta_{p^k}^{c\bar{\phi}(s)}  \sum\limits_{x\in U_s}  \zeta_p^{-\mathbf{u} \cdot \mathbf{x}}
+ \sum\limits_{\mathbf{x}\in U_0}\zeta_p^{-\mathbf{u} \cdot \mathbf{x}}\\
&=&\left\{
\begin{array}{l}
p^{\frac{n}{2}}~~~~~~~~~~~~~~\mathbf{u} \in U_0^{\perp}, \\
p^{\frac{n}{2}} \zeta_{p^k}^{c\bar{\phi}(\tilde{s})}~~~~~\mathbf{u} \in U_{\tilde{s}}^{\perp}~for~some~1 \leq \tilde{s} \leq p^{\frac{n}{2}}.
\end{array}
\right.
\end{eqnarray*}
Again using that $\sum\limits_{s=1}^{p^m} \zeta_{p^k}^{c\bar{\phi}(s)}=0$, we obtain $\mathcal{H}_f(c,\mathbf{0})=p^{\frac{n}{2}}$, and the proof is completed.
\end{Example}

At last, we emphasize that the property of being $\Z_{p^k}$-bent is much stronger than the property of being vectorial bent.
$\Z_{p^k}$-bent functions are very interesting vectorial bent functions since they correspond two relative difference  sets
with parameters $(p^n,p^k,p^n,p^{n-k})$: first of all, being vectorial bent, they correspond to the relative difference set
$D=\{(\mathbf{x}, a_0(\mathbf{x}),a_1(\mathbf{x}), \dots, a_{k-1}(\mathbf{x})):\mathbf{x} \in \Z_{p}^n\}$ in $\Z_{p}^n \times \Z_{p}^k$,
and secondly, to the relative difference set $R=\{(\mathbf{x},\sum_{i=0}^{k-1}a_i(\mathbf{x})):\mathbf{x} \in \Z_{p}^n\}$
in $\Z_{p}^n \times \Z_{p^k}$. Besides,
it would be interesting to construct more $\Z_{p^k}$-bent  functions.

\section{The dual and Gray map of gbent functions}
\label{sec5}
In this section we firstly attempt to describe the dual $f^{\ast}$ of an arbitrary gbent function in $\mathcal{GB}_n^{p^k}$, then
the generalized Gray map of gbent function is considered.

\subsection{The dual of gbent function}
\label{sec5.1}

\begin{Lemma}
\label{Lemma5.1.1}
Let $f\in \mathcal{GB}_n^{p^k}$ be given as $f(\mathbf{x})=\sum_{i=0}^{k-1}p^ia_i(\mathbf{x})$, and $g_{\mathbf{c}}(\mathbf{x})=a_{k-1}(\mathbf{x})+\sum_{i=0}^{k-2}c_ia_{i}(\mathbf{x})$,
where $\mathbf{c}=(c_0,c_1,\cdots,c_{k-2}) \in \Z_{p}^{k-1}$.  Then

$(i)$.   \[\zeta_{p^k}^{f(\mathbf{x})}= \frac{1}{p^{k-1}} \sum\limits_{\mathbf{c} \in \Z_{p}^{k-1}} \zeta_{p}^{g_{\mathbf{c}}(\mathbf{x})} \gamma_{\mathbf{c}},\]
where $\gamma_{\mathbf{c}}$ defined as in Remark \ref{Remark2.5}.

$(ii)$. For any $\mathbf{u} \in \Z_p^n$, we have
\[\mathcal{H}_f(\mathbf{u})=\frac{1}{p^{k-1}} \sum \limits_{\mathbf{c} \in \Z_p^{k-1}} \mathcal{W}_{g_{\mathbf{c}}}(\mathbf{u}) \gamma_{\mathbf{c}}. \]
\end{Lemma}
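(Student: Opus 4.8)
The plan is to prove $(i)$ by a direct change-of-basis computation invoking Lemma \ref{Lemma2.6}, and then to deduce $(ii)$ by substituting $(i)$ into the definition of the generalized Walsh-Hadamard transform and interchanging the order of summation. Both parts reduce to the Fourier identity already established earlier, so the work is mostly bookkeeping.

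For $(i)$, the first move is to peel off the top digit of $f$. Writing $f(\mathbf{x})=\sum_{i=0}^{k-1}p^ia_i(\mathbf{x})$ and using $\zeta_{p^k}^{p^{k-1}}=\zeta_p$, I would factor
\[
\zeta_{p^k}^{f(\mathbf{x})}=\zeta_p^{a_{k-1}(\mathbf{x})}\,\zeta_{p^k}^{e(\mathbf{x})},\qquad e(\mathbf{x})=\sum_{i=0}^{k-2}p^ia_i(\mathbf{x}).
\]
This factorization is precisely why $a_{k-1}$ is singled out in the definition of $g_{\mathbf{c}}$. The exponent $e(\mathbf{x})$ now has the shape $\sum_{j=1}^{l-1}u_jp^{(j-1)t}$ appearing in Lemma \ref{Lemma2.6} upon specializing $t=1$, $l=k$, with $u_j=a_{j-1}(\mathbf{x})$, that is $\mathbf{u}=(a_0(\mathbf{x}),\dots,a_{k-2}(\mathbf{x}))$. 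Lemma \ref{Lemma2.6} then yields
\[
\zeta_{p^k}^{e(\mathbf{x})}=\frac{1}{p^{k-1}}\sum_{\mathbf{c}\in\Z_p^{k-1}}\zeta_p^{\mathbf{c}\cdot\mathbf{u}}\gamma_{\mathbf{c}}=\frac{1}{p^{k-1}}\sum_{\mathbf{c}\in\Z_p^{k-1}}\zeta_p^{\sum_{i=0}^{k-2}c_ia_i(\mathbf{x})}\gamma_{\mathbf{c}}.
\]
Multiplying back by $\zeta_p^{a_{k-1}(\mathbf{x})}$ and recognizing $a_{k-1}(\mathbf{x})+\sum_{i=0}^{k-2}c_ia_i(\mathbf{x})=g_{\mathbf{c}}(\mathbf{x})$ gives exactly the formula in $(i)$.

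For $(ii)$, I would substitute the identity of $(i)$ into $\mathcal{H}_f(\mathbf{u})=p^{-\frac{n}{2}}\sum_{\mathbf{x}\in\Z_p^n}\zeta_p^{-\mathbf{u}\cdot\mathbf{x}}\zeta_{p^k}^{f(\mathbf{x})}$. Since every $\gamma_{\mathbf{c}}$ is a constant independent of $\mathbf{x}$, the resulting double sum reorganizes by pulling $\frac{1}{p^{k-1}}\sum_{\mathbf{c}}\gamma_{\mathbf{c}}$ outside, leaving inside the inner sum $p^{-\frac{n}{2}}\sum_{\mathbf{x}}\zeta_p^{g_{\mathbf{c}}(\mathbf{x})}\zeta_p^{-\mathbf{u}\cdot\mathbf{x}}=\mathcal{W}_{g_{\mathbf{c}}}(\mathbf{u})$, which is exactly the claimed expression.

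Neither step poses a genuine obstacle; the substance is the change-of-basis identity of Lemma \ref{Lemma2.6} applied with the specialization $t=1$, $l=k$, which by Remark \ref{Remark2.4} and Remark \ref{Remark2.5} produces precisely the $\gamma_{\mathbf{c}}$ used in the statement. The only points requiring care are the index shift (here the digits $a_0,\dots,a_{k-1}$ are indexed from $0$, whereas Lemma \ref{Lemma2.6} indexes the coordinates from $1$) and confirming at the outset that $\mathcal{W}_{g_{\mathbf{c}}}$ denotes the (identically normalized) Walsh-Hadamard transform of the single $p$-ary component $g_{\mathbf{c}}\in\mathcal{GB}_n^{p}$, so that the interchange of summation in $(ii)$ reproduces it verbatim.
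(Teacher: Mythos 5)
Your proposal is correct and is in substance the paper's own proof: part $(ii)$ is verbatim the paper's argument (substitute $(i)$ into the definition of $\mathcal{H}_f$ and interchange the sums over $\mathbf{x}$ and $\mathbf{c}$, recognizing the inner sum as $\mathcal{W}_{g_{\mathbf{c}}}(\mathbf{u})$). The only deviation is in $(i)$: where you peel off $\zeta_p^{a_{k-1}(\mathbf{x})}$ and then cite Lemma \ref{Lemma2.6} wholesale (with $t=1$, $l=k$, $u_j=a_{j-1}(\mathbf{x})$), the paper peels off the same factor but re-derives that expansion inline, expanding each $\zeta_{p^k}^{p^ia_i(\mathbf{x})}$ by orthogonality over $\Z_p$ and identifying the resulting product $\prod_{i=0}^{k-2}\bigl(\sum_{d_i\in\Z_p}\zeta_p^{-d_ic_i}\zeta_{p^k}^{p^id_i}\bigr)$ with $\gamma_{\mathbf{c}}$ via Lemma \ref{Lemma2.7} --- a cosmetic difference, since Lemma \ref{Lemma2.6} is precisely the identity being re-derived there.
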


\begin{proof}
$(i)$. It is easy to verify that
\begin{eqnarray*}
\zeta_{p^k}^{f(\mathbf{x})}&=&\zeta_{p^k}^{\sum_{i=0}^{k-1}p^ia_i(\mathbf{x})}\\
&=& \zeta_p^{a_{k-1}(\mathbf{x})} \prod_{i=0}^{k-2} \zeta_{p^k}^{p^ia_{i}(\mathbf{x})}\\
&=& \zeta_p^{a_{k-1}(\mathbf{x})} \prod_{i=0}^{k-2} \frac{1}{p} \left(\sum \limits_{d_i \in \Z_p}
\left(\sum\limits_{c_i \in \Z_p} \zeta_p^{(a_i(\mathbf{x})-d_i)c_i}\right)   \zeta_{p^k}^{p^id_i} \right)\\
&=& \frac{1}{p^{k-1}} \zeta_p^{a_{k-1}(\mathbf{x})} \prod_{i=0}^{k-2}  \left( \sum \limits_{c_i \in \Z_p}
\left(\sum\limits_{d_i \in \Z_p} \zeta_p^{-d_ic_i}  \zeta_{p^k}^{p^id_i} \right)  \zeta_p^{c_ia_i(\mathbf{x})} \right)\\
&=& \frac{1}{p^{k-1}} \zeta_p^{a_{k-1}(\mathbf{x})}  \sum \limits_{\mathbf{c} \in \Z_p^{k-1}} \zeta_p^{\sum_{i=0}^{k-2} c_ia_i(\mathbf{x})}
\prod_{i=0}^{k-2} \left(\sum\limits_{d_i \in \Z_p} \zeta_p^{-d_ic_i}  \zeta_{p^k}^{p^id_i} \right)   \\
&=& \frac{1}{p^{k-1}}  \sum \limits_{\mathbf{c} \in \Z_p^{k-1}} \zeta_p^{ a_{k-1}(\mathbf{x}) + \sum_{i=0}^{k-2} c_ia_i(\mathbf{x})}
\prod_{i=0}^{k-2} \left(\sum\limits_{d_i \in \Z_p} \zeta_p^{-d_ic_i}  \zeta_{p^k}^{p^id_i} \right)   \\
&=& \frac{1}{p^{k-1}}  \sum \limits_{\mathbf{c} \in \Z_p^{k-1}} \zeta_p^{g_{\mathbf{c}}(\mathbf{x})} \gamma_{\mathbf{c}},
\end{eqnarray*}
where the last equality holds from Lemma \ref{Lemma2.7}.

$(ii)$. By the definition, we have
\begin{eqnarray*}
\mathcal{H}_f(\mathbf{u})&=& p^{-\frac{n}{2}} \sum\limits_{\mathbf{x} \in \Z_p^n} \zeta_{p^k}^{f(\mathbf{x})} \zeta_p^{-\mathbf{u} \cdot \mathbf{x}}.
\end{eqnarray*}
Using $(i)$, we get
\begin{eqnarray*}
\mathcal{H}_f(\mathbf{u})&=& \frac{p^{-\frac{n}{2}}}{p^{k-1}}  \sum\limits_{\mathbf{x} \in \Z_p^n}  \sum\limits_{\mathbf{c} \in \Z_{p}^{k-1}}
\zeta_{p}^{g_{\mathbf{c}}(\mathbf{x})}  \zeta_p^{-\mathbf{u} \cdot \mathbf{x}} \gamma_{\mathbf{c}}\\
&=& \frac{p^{-\frac{n}{2}}}{p^{k-1}}  \sum\limits_{\mathbf{c} \in \Z_{p}^{k-1}} \left(\sum\limits_{\mathbf{x} \in \Z_p^n}
\zeta_{p}^{g_{\mathbf{c}}(\mathbf{x})}  \zeta_p^{-\mathbf{u} \cdot \mathbf{x}} \right) \gamma_{\mathbf{c}}\\
&=& \frac{1}{p^{k-1}} \sum \limits_{\mathbf{c} \in \Z_p^{k-1}} \mathcal{W}_{g_{\mathbf{c}}}(\mathbf{u}) \gamma_{\mathbf{c}}.
\end{eqnarray*}

This completes the proof.
\end{proof}

From Lemma \ref{Lemma2.3}, we know that
for gbent function $f \in \mathcal{GB}_n^{p^k}$,  there exists a function $f^{\ast}: \Z_p^{n}\rightarrow \Z_{p^k}$ such that
\begin{eqnarray*}
 \mathcal{H}_f(\mathbf{u})=\left\{
\begin{array}{l}
\pm \zeta_{p^k}^{f^{\ast}(\mathbf{u})}  ~~~~~~~~~~~~$if$~~$n$~~is~~even~~or~~$n$~~is ~~odd~~and~~p \equiv 1(mod~4),\\
\pm \sqrt{-1} \zeta_{p^k}^{f^{\ast}(\mathbf{u})} ~~~~ $if$~~ $n$~~is ~~odd~~and~~p \equiv 3(mod~4).
\end{array}
\right.
\end{eqnarray*}

In \cite{Cesmelioglu1},  $f^{\ast}$ is called the dual of gbent function $f$. 
We emphasis that when $f$ is non-weakly regular,
$f^{\ast}$ may not gbent function. Then, in the following, we will describe the dual $f^{\ast}$ of a gbent function
$f \in \mathcal{GB}_n^{p^k}$ via the dual of the component functions of $f$.

\begin{thm}
\label{thm5.1.1}
Let $f\in \mathcal{GB}_n^{p^k}$ be a gbent function  given as $f(\mathbf{x})=\sum_{i=0}^{k-1}p^ia_i(\mathbf{x})$,
for some $p$-ary Boolean functions $a_i$, $0 \leq i \leq k-1$, with component functions $g_{\mathbf{c}}(\mathbf{x})
=a_{k-1}(\mathbf{x})+\sum_{i=0}^{k-2}c_ia_{i}(\mathbf{x})$, where $\mathbf{c}=(c_0,c_1,\cdots,c_{k-2}) \in \Z_{p}^{k-1}$.
Then the dual $f^{\ast}\in \mathcal{GB}_n^{p^k}$ of $f$ is given as  $f^{\ast}(\mathbf{x})=\sum_{i=0}^{k-1}p^ib_i(\mathbf{x})$,
where $b_{k-1}(\mathbf{x})=a_{k-1}^{\ast}(\mathbf{x})$,  $b_j(\mathbf{x})=\left(a_{k-1}(\mathbf{x}) +a_j(\mathbf{x})\right)^{\ast}-a_{k-1}^{\ast}(\mathbf{x})$, $0 \leq j \leq k-2$.
\end{thm}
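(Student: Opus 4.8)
The plan is to evaluate $\mathcal{H}_f(\mathbf{u})$ through the component-function expansion of Lemma \ref{Lemma5.1.1}$(ii)$ and then to collapse that expansion back into a single power of $\zeta_{p^k}$ by means of Lemma \ref{Lemma2.6}, reading off $f^{\ast}$ directly from the resulting exponent. The one structural fact that makes this collapse possible is that, since $f$ is gbent, the duals $g_{\mathbf{c}}^{\ast}$ of the component functions are \emph{affine} in $\mathbf{c}$, and the accompanying unimodular prefactor does \emph{not} depend on $\mathbf{c}$. This is exactly what the characterization Theorem \ref{thm3.1.03} supplies.

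First I would apply Theorem \ref{thm3.1.03} with $t=1$ and $l=k$, under the relabeling $b_i=a_{i-1}$, so that the component functions appearing there are precisely the $g_{\mathbf{c}}(\mathbf{x})=a_{k-1}(\mathbf{x})+\sum_{i=0}^{k-2}c_ia_i(\mathbf{x})$ of the present statement. Its forward direction gives, for each fixed $\mathbf{u}$, values $j(\mathbf{u})\in\Z_p$ and $\mathbf{d}(\mathbf{u})=(d_0(\mathbf{u}),\dots,d_{k-2}(\mathbf{u}))\in\Z_p^{k-1}$, together with a single sign $\epsilon(\mathbf{u})=\pm1$ (and a common factor $\sqrt{-1}$ in the case $n$ odd, $p\equiv3\ (\mathrm{mod}\ 4)$), all independent of $\mathbf{c}$, such that
\[
\mathcal{W}_{g_{\mathbf{c}}}(\mathbf{u})=\epsilon(\mathbf{u})\,\zeta_p^{\,j(\mathbf{u})+\mathbf{c}\cdot\mathbf{d}(\mathbf{u})}
\]
in the first case, and the analogous expression with an extra $\sqrt{-1}$ in the second. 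In particular each $g_{\mathbf{c}}$ is $p$-ary bent with $g_{\mathbf{c}}^{\ast}(\mathbf{u})=j(\mathbf{u})+\mathbf{c}\cdot\mathbf{d}(\mathbf{u})$. Specializing $\mathbf{c}=\mathbf{0}$ (where $g_{\mathbf{0}}=a_{k-1}$) gives $j(\mathbf{u})=a_{k-1}^{\ast}(\mathbf{u})=b_{k-1}(\mathbf{u})$, and specializing $\mathbf{c}=\mathbf{e}_i$ (where $g_{\mathbf{e}_i}=a_{k-1}+a_i$) gives $d_i(\mathbf{u})=\left(a_{k-1}+a_i\right)^{\ast}(\mathbf{u})-a_{k-1}^{\ast}(\mathbf{u})=b_i(\mathbf{u})$ for $0\le i\le k-2$, the common sign guaranteeing that these differences of duals are well defined.

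Substituting into Lemma \ref{Lemma5.1.1}$(ii)$ and factoring out $\epsilon(\mathbf{u})\zeta_p^{\,j(\mathbf{u})}$ (and the global $\sqrt{-1}$ in the relevant case) leaves
\[
\mathcal{H}_f(\mathbf{u})=\epsilon(\mathbf{u})\,\zeta_p^{\,j(\mathbf{u})}\cdot\frac{1}{p^{k-1}}\sum_{\mathbf{c}\in\Z_p^{k-1}}\zeta_p^{\,\mathbf{c}\cdot\mathbf{d}(\mathbf{u})}\gamma_{\mathbf{c}},
\]
and Lemma \ref{Lemma2.6} (with $t=1$, $l=k$) identifies the remaining sum as $\zeta_{p^k}^{\sum_{i=0}^{k-2}d_i(\mathbf{u})p^{i}}$. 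Using $\zeta_p^{\,j(\mathbf{u})}=\zeta_{p^k}^{\,p^{k-1}j(\mathbf{u})}$ then yields
\[
\mathcal{H}_f(\mathbf{u})=\epsilon(\mathbf{u})\,\zeta_{p^k}^{\,p^{k-1}b_{k-1}(\mathbf{u})+\sum_{i=0}^{k-2}p^{i}b_i(\mathbf{u})}=\epsilon(\mathbf{u})\,\zeta_{p^k}^{\,f^{\ast}(\mathbf{u})},
\]
with the extra $\sqrt{-1}$ reinstated in the case $n$ odd, $p\equiv3\ (\mathrm{mod}\ 4)$. Comparing with the two forms of $\mathcal{H}_f(\mathbf{u})$ in Lemma \ref{Lemma2.3} shows that the exponent $f^{\ast}(\mathbf{x})=\sum_{i=0}^{k-1}p^{i}b_i(\mathbf{x})$ is exactly the dual of $f$, as claimed.

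The only genuinely delicate point, and the step I would treat most carefully, is the $\mathbf{c}$-independence of both the sign $\epsilon(\mathbf{u})$ and the data $j(\mathbf{u}),\mathbf{d}(\mathbf{u})$, i.e. the affine dependence of $g_{\mathbf{c}}^{\ast}$ on $\mathbf{c}$: if the sign were allowed to vary with $\mathbf{c}$, the sum over $\mathbf{c}$ would fail to collapse to a single $\zeta_{p^k}$-power and no clean dual would emerge. Fortunately this is precisely the content of Theorem \ref{thm3.1.03}, whose proof forces a common sign by matching coefficients against the basis $\{\gamma_{\mathbf{c}}\}$ of $\Q(\zeta_{p^k})$ over $\Q(\zeta_p)$; everything else is the routine index bookkeeping of passing between the $0$-based indexing of the $a_i$ and the $1$-based indexing of Lemmas \ref{Lemma2.6} and \ref{thm3.1.03}, and the uniform treatment of the two root-of-unity cases.
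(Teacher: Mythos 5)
Your proof is correct and follows essentially the same route as the paper's: both rest on the expansion of Lemma \ref{Lemma5.1.1}$(ii)$ together with the key fact, drawn from (the proof of) Theorem \ref{thm3.1.03} with $t=1$, that the unimodular prefactor is common to all component functions and that $g_{\mathbf{c}}^{\ast}$ is affine in $\mathbf{c}$, and both read off $b_{k-1}$ and $b_j$ from the specializations $\mathbf{c}=\mathbf{0}$ and $\mathbf{c}=\mathbf{e}_j$. The only cosmetic difference is that you resum the expansion directly via Lemma \ref{Lemma2.6}, whereas the paper expands the unknown $\zeta_{p^k}^{f^{\ast}}$ by Lemma \ref{Lemma5.1.1}$(i)$ and matches coefficients against the basis $\{\gamma_{\mathbf{c}}\}$ using its linear independence over $\Q(\zeta_p)$; these two steps are inverses of one another.
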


\begin{proof}
From the $(ii)$ of Lemma \ref{Lemma5.1.1} and Lemma \ref{Lemma2.3} , we have
\begin{eqnarray}
\begin{split}
\label{5.1.01}
\mathcal{H}_f(\mathbf{u})&=\frac{1}{p^{k-1}} \sum \limits_{\mathbf{c} \in \Z_p^{k-1}} \mathcal{W}_{g_{\mathbf{c}}}(\mathbf{u}) \gamma_{\mathbf{c}}\\
&=\frac{1}{p^{k-1}} \sum \limits_{\mathbf{c} \in \Z_p^{k-1}} \alpha_{\mathbf{c}} \zeta_p^{g_{\mathbf{c}}^{\ast}(\mathbf{u})} \gamma_{\mathbf{c}}\\
&= \beta \zeta_{p^k}^{f^{\ast}(\mathbf{u})}.
\end{split}
\end{eqnarray}
By the  proof  of Theorem \ref{thm3.1.03} (set $t=1$), we know that $\alpha_{\mathbf{c}} =\beta \in \{\pm 1, \pm \sqrt{-1}\}$ for all $\mathbf{c} \in \Z_{p}^{k-1}$.

Suppose that $f^{\ast}(\mathbf{x})=\sum_{i=0}^{k-1}p^ib_i(\mathbf{x})$ and denote the component functions of $f^{\ast}$ by
$h_{\mathbf{c}}(\mathbf{x})=b_{k-1}(\mathbf{x})+\sum_{i=0}^{k-2}c_ib_{i}(\mathbf{x})$ for every $\mathbf{c} \in \Z_{p}^{k-1}$.
By  Lemma \ref{Lemma5.1.1} (i), we have
\begin{eqnarray}
\label{5.1.02}
\zeta_{p^k}^{f^{\ast}(\mathbf{u})}= \frac{1}{p^{k-1}} \sum\limits_{\mathbf{c} \in \Z_{p}^{k-1}} \zeta_{p}^{h_{\mathbf{c}}(\mathbf{u})} \gamma_{\mathbf{c}}
\end{eqnarray}
Combining (\ref{5.1.01}) with (\ref{5.1.02}),  we  can  get
\begin{eqnarray*}
\sum\limits_{\mathbf{c} \in \Z_{p}^{k-1}} \zeta_{p}^{g_{\mathbf{c}}^{\ast}(\mathbf{u})} \gamma_{\mathbf{c}} =
\sum\limits_{\mathbf{c} \in \Z_{p}^{k-1}} \zeta_{p}^{h_{\mathbf{c}}(\mathbf{u})} \gamma_{\mathbf{c}}.
\end{eqnarray*}
Note that $\{\gamma_{\mathbf{c}} | \mathbf{c} \in \Z_p^{k-1}\}$ is linear independently over $\Q(\zeta_p)$ by Remark \ref{Remark2.2}, thus we have
\begin{eqnarray*}
\zeta_{p}^{g_{\mathbf{c}}^{\ast}(\mathbf{u})}=\zeta_{p}^{h_{\mathbf{c}}(\mathbf{u})}
\end{eqnarray*}
for all $\mathbf{c} \in \Z_{p}^{k-1}$. Further,  $g_{\mathbf{c}}^{\ast}(\mathbf{u})=h_{\mathbf{c}}(\mathbf{u})$, for all $\mathbf{c} \in \Z_{p}^{k-1}$.
For simplicity, in the following we also use $\sum_{i=0}^{k-2} p^ic_i$ represents $\mathbf{c}=(c_0,c_1,\cdots,c_{k-2}) \in \Z_p^{k-1}$.
Finally, $b_{k-1}=h_0(\mathbf{u})=g_{0}^{\ast}(\mathbf{u})=a_{k-1}^{\ast} $ and with $g_{p^j}^{\ast}=h_{p^j}=b_{k-1}+b_j$ and $g_{p^j}=a_{k-1}+a_j$,
$0 \leq j \leq k-2$, we get
\begin{eqnarray*}
b_j=(a_{k-1}+a_j)^{\ast}-a_{k-1}^{\ast}, ~~0 \leq j \leq k-2.
\end{eqnarray*}
\end{proof}

\subsection{The generalized Gray map  of gbent function}
\label{sec5.2}

In this subsection, we firstly define the $s$-$plateaued$ function for $p$-ary Boolean functions, and then show t Gray image of
any gbent function in $\mathcal{GB}_n^{p^k}$  is a $(k-1)$-plateaued function.

\begin{Definition}
\label{Definition5.2.1}
For a $p$-ary Boolean function $f$, then $f$ is  called $s$-$plateaued$ if there exist some $a \in \Z_p$, such that
\begin{eqnarray*}
 \mathcal{W}_f(\mathbf{u})=\left\{
\begin{array}{l}
0 ~or~\pm \zeta_p^a p^{\frac{s}{2}} ~~~~~~~~~~~~$if$~~$n$~~is~~even~~or~~$n$~~is ~~odd~~and~~p \equiv 1(mod~4),\\
0 ~or~\pm \sqrt{-1}\zeta_p^a p^{\frac{s}{2}} ~~~~ $if$~~ $n$~~is ~~odd~~and~~p \equiv 3(mod~4).
\end{array}
\right.
\end{eqnarray*}
\end{Definition}

We recall the definition of Gray mapping.
\begin{Definition}
[see \cite{Carlet1}]
\label{Definition5.2.1}
The Gray map is the mapping $G$ from $\Z_4$ to $\Z_2^2$ defined by
\[G(0)=(0,0); G(1)=(0,1); G(2)=(1,1); G(3)=(1,0).\]
\end{Definition}
$G$ can be extended to a mapping from $\Z_4^n$ to $\Z_2^{2n}$ coordinate-wisely. It was also generalized to $\Z_{2^k}$ by Carlet in \cite{Carlet1}.
Recently, the Gary map was generalized to $\Z_{p^k}$ by  Heng et al in \cite{Heng}.  Let $f \in \mathcal{GB}_n^{p^k}$ and be give as
$f(\mathbf{x})=\sum_{i=0}^{k-1}p^ia_i(\mathbf{x})$, for any $\mathbf{x} \in \Z_p^n$.  The  generalized Gray map $G:\mathcal{GB}_n^{p^k}
\rightarrow \mathcal{GB}_{n+k-1}^p$ is defined by
\begin{eqnarray*}
&G(f): \Z_p^n \times \Z_p^{k-1} \rightarrow \Z_p\\
&(\mathbf{x},y_0,\dots,y_{k-2}) \mapsto  a_{k-1}(\mathbf{x})+\sum\limits_{i=0}^{k-2}a_i(\mathbf{x})y_i.
\end{eqnarray*}
It is known that the reverse image of the Hamming distance by the generalized Gray map is a translation-invariant distance.

We now can show that the generalized Gray map of a gbent function in $\mathcal{GB}_n^{p^k}$ is a $(k-1)$-plateaued function.
\begin{thm}
\label{thm5.2.1}
Let $f\in \mathcal{GB}_n^{p^k}$ be a gbent function  given as $f(\mathbf{x})=\sum_{i=0}^{k-1}p^ia_i(\mathbf{x})$. Then $G(f)$
is a $(k-1)$-plateaued function in $\mathcal{GB}_{n+k-1}^p$. For every $(\mathbf{u},\mathbf{z}_r) \in  \Z_p^n \times \Z_p^{k-1}$,
where $\mathbf{z}_r=(z_0,z_1,\dots,z_{k-2}) \in \Z_p^{k-1}$ and $r=\sum_{i=0}^{k-2}p^iz_i$. We have
\begin{eqnarray*}
 \mathcal{W}_{G(f)}(\mathbf{u},\mathbf{z}_r)=\left\{
\begin{array}{l}
0 ~or~\pm  \zeta_p^a p^{\frac{k-1}{2}} ~~~~~~~~~~~~$if$~~$n$~~is~~even~~or~~$n$~~is ~~odd~~and~~p \equiv 1(mod~4),\\
0 ~or~\pm \sqrt{-1} \zeta_p^a  p^{\frac{k-1}{2}} ~~~~ $if$~~ $n$~~is ~~odd~~and~~p \equiv 3(mod~4).
\end{array}
\right.
\end{eqnarray*}
\end{thm}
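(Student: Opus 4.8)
The plan is to compute the Walsh--Hadamard transform $\mathcal{W}_{G(f)}(\mathbf{u},\mathbf{z}_r)$ directly from its definition on $n+k-1$ variables and to recognize that, as a function of the new variable $\mathbf{z}_r$, it is (up to a normalizing power of $p$) the additive Fourier transform over $\Z_p^{k-1}$ of the family of component transforms $\{\mathcal{W}_{g_{\mathbf{c}}}(\mathbf{u})\}_{\mathbf{c}}$, where $g_{\mathbf{c}}(\mathbf{x})=a_{k-1}(\mathbf{x})+\sum_{i=0}^{k-2}c_ia_i(\mathbf{x})$ are exactly the component functions of Lemma \ref{Lemma5.1.1} and Theorem \ref{thm5.1.1}. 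Writing $\mathbf{a}(\mathbf{x})=(a_0(\mathbf{x}),\dots,a_{k-2}(\mathbf{x}))$, the definition of $G(f)$ gives $\zeta_p^{G(f)(\mathbf{x},\mathbf{y})}=\zeta_p^{a_{k-1}(\mathbf{x})}\zeta_p^{\mathbf{a}(\mathbf{x})\cdot\mathbf{y}}$, so separating the summation over $\mathbf{y}\in\Z_p^{k-1}$ and using that $\sum_{\mathbf{y}}\zeta_p^{(\mathbf{a}(\mathbf{x})-\mathbf{z}_r)\cdot\mathbf{y}}$ equals $p^{k-1}$ when $\mathbf{a}(\mathbf{x})=\mathbf{z}_r$ and $0$ otherwise, I would reduce the transform to a sum of $\zeta_p^{a_{k-1}(\mathbf{x})-\mathbf{u}\cdot\mathbf{x}}$ taken over those $\mathbf{x}$ with $a_i(\mathbf{x})=z_i$ for $0\leq i\leq k-2$.

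First I would convert this restricted sum into the clean identity $\mathcal{W}_{G(f)}(\mathbf{u},\mathbf{z}_r)=p^{-\frac{k-1}{2}}\sum_{\mathbf{c}\in\Z_p^{k-1}}\zeta_p^{-\mathbf{c}\cdot\mathbf{z}_r}\mathcal{W}_{g_{\mathbf{c}}}(\mathbf{u})$. This is verified by expanding the right-hand side, interchanging the order of summation, and collapsing the inner character sum $\sum_{\mathbf{c}}\zeta_p^{\mathbf{c}\cdot(\mathbf{a}(\mathbf{x})-\mathbf{z}_r)}$ by orthogonality; it recovers precisely the restricted sum just described, and the powers of $p$ match since $-\tfrac{n+k-1}{2}+(k-1)=\tfrac{k-1}{2}-\tfrac{n}{2}$. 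This identity is the additive-character analogue, in the new variables $\mathbf{y}$, of part (ii) of Lemma \ref{Lemma5.1.1}, with the basis $\{\gamma_{\mathbf{c}}\}$ there replaced by the characters $\zeta_p^{-\mathbf{c}\cdot\mathbf{z}_r}$, so I expect it to follow with only routine manipulation.

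The decisive step is to feed in the rigid form of the component transforms supplied by Theorem \ref{thm3.1.03} with $t=1$ (equivalently, by the computation in the proof of Theorem \ref{thm5.1.1}): for gbent $f$ and fixed $\mathbf{u}$ there are $j$ and $\mathbf{d}$, depending only on $\mathbf{u}$, and a single phase $\beta\in\{\pm1\}$ when $n$ is even or $n$ is odd and $p\equiv1~(\bmod~4)$, or $\beta\in\{\pm\sqrt{-1}\}$ when $n$ is odd and $p\equiv3~(\bmod~4)$, such that $\mathcal{W}_{g_{\mathbf{c}}}(\mathbf{u})=\beta\,\zeta_p^{\,j+\mathbf{c}\cdot\mathbf{d}}$ for every $\mathbf{c}$. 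The crucial feature is that the dual $g_{\mathbf{c}}^{\ast}(\mathbf{u})=j+\mathbf{c}\cdot\mathbf{d}$ is \emph{affine} in $\mathbf{c}$ with a phase $\beta$ \emph{common} to all $\mathbf{c}$. Substituting this into the identity gives $\mathcal{W}_{G(f)}(\mathbf{u},\mathbf{z}_r)=p^{-\frac{k-1}{2}}\beta\,\zeta_p^{\,j}\sum_{\mathbf{c}\in\Z_p^{k-1}}\zeta_p^{\mathbf{c}\cdot(\mathbf{d}-\mathbf{z}_r)}$, and the remaining character sum equals $p^{k-1}$ exactly when $\mathbf{z}_r=\mathbf{d}$ and $0$ otherwise. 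Hence $\mathcal{W}_{G(f)}(\mathbf{u},\mathbf{z}_r)$ equals $\beta\,\zeta_p^{\,j}p^{\frac{k-1}{2}}$ for the unique $\mathbf{z}_r=\mathbf{d}(\mathbf{u})$ and vanishes for all other $\mathbf{z}_r$, which is precisely the $(k-1)$-plateaued form of Definition \ref{Definition5.2.1} with $a=j$.

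I expect the main obstacle to be justifying the two structural facts imported from Theorem \ref{thm3.1.03}: that $g_{\mathbf{c}}^{\ast}(\mathbf{u})$ depends \emph{affinely} on $\mathbf{c}$, and that the accompanying phase $\beta$ is the \emph{same} for all $\mathbf{c}$ at a fixed $\mathbf{u}$. These two properties are exactly what force the $\mathbf{c}$-sum to concentrate on a single value of $\mathbf{z}_r$; without them one would recover only $|\mathcal{W}_{G(f)}|\in\{0,p^{(k-1)/2}\}$ in an averaged sense rather than the exact plateaued values with the correct phase. Since both are already established in the proof of Theorem \ref{thm3.1.03} and reiterated for $t=1$ in Theorem \ref{thm5.1.1}, the argument ultimately reduces to the two character-sum collapses above.
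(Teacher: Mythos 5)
Your proposal is correct and follows essentially the same route as the paper: expand $\mathcal{W}_{G(f)}$, split off the sum over the auxiliary variables to obtain $\mathcal{W}_{G(f)}(\mathbf{u},\mathbf{z}_r)=p^{-\frac{k-1}{2}}\sum_{\mathbf{c}}\zeta_p^{-\mathbf{c}\cdot\mathbf{z}_r}\mathcal{W}_{g_{\mathbf{c}}}(\mathbf{u})$, then invoke the rigid form $\mathcal{W}_{g_{\mathbf{c}}}(\mathbf{u})=\beta\zeta_p^{j+\mathbf{c}\cdot\mathbf{d}}$ (common phase, affine exponent) from Theorem \ref{thm3.1.03} with $t=1$ and collapse the resulting character sum. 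The only cosmetic difference is that the paper packages this collapse as row-orthogonality of the generalized Hadamard matrix $H_{p^{k-1}}$ (citing a mislabeled ``Corollary 3.1.03'', which is really Theorem \ref{thm3.1.03} at $t=1$), whereas you perform the orthogonality sum directly.
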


\begin{proof}
When $n$ is even or $n$ is odd and $p\equiv 1~(mod~4)$.  For every $(\mathbf{u},\mathbf{z}_r) \in  \Z_p^n \times \Z_p^{k-1}$, we have
\begin{eqnarray*}
\mathcal{W}_{G(f)}(\mathbf{u},\mathbf{z}_r)&=& p^{- \frac{n+k-1}{2}} \sum\limits_{(\mathbf{x},\mathbf{z}_j) \in \Z_p^n \times \Z_p^{k-1}}
\zeta_p^{- \mathbf{u} \cdot \mathbf{x} - \mathbf{z}_r \cdot \mathbf{z}_j   + G(f)(\mathbf{x},\mathbf{z}_j)  }  \\
&=& p^{- \frac{n+k-1}{2}} \sum\limits_{\mathbf{z}_j \in \Z_p^{k-1}}  \zeta_p^{- \mathbf{z}_r \cdot \mathbf{z}_j  }
\sum\limits_{\mathbf{x} \in \Z_p^{n}}  \zeta_p^{- \mathbf{u} \cdot \mathbf{x}  + a_{k-1}(\mathbf{x}) + \mathbf{z}_j \cdot A(\mathbf{x}) }  \\
&=& p^{- \frac{k-1}{2}} \sum\limits_{\mathbf{z}_j \in \Z_p^{k-1}}  \zeta_p^{- \mathbf{z}_r \cdot \mathbf{z}_j  }
\mathcal{W}_{a_{k-1}+  \mathbf{z}_j \cdot A } (\mathbf{u}) \\
&=& p^{- \frac{k-1}{2}} \overline{H_{p^{k-1}}^{(r)}} (\pm \zeta_p^a H_{p^{k-1}}^{(j)})^{\mathrm{T}} \\
&=&\left\{
\begin{array}{l}
\pm \zeta_p^a p^{\frac{k-1}{2}} ~~~~if~r=j,\\
0~~~~~~~~~~~~~~~~~if~r\neq j,
\end{array}
\right.
\end{eqnarray*}
where $H_{p^{k-1}}=\left(\zeta_p^{ij}\right)_{0 \leq i,j \leq p-1}^{\bigotimes {(k-1)}}$ is a generalized Hadamard matrix of order $p^{k-1}$,  $A(\mathbf{x})=(a_0(\mathbf{x}),a_1(\mathbf{x}),\dots,a_{k-2}(\mathbf{x}))$
and $A=(a_0,a_1,\dots,a_{k-2})$. Note that the penultimate equal sign holds due to Corollary \ref{Corollary3.1.03}.

When $n$ is odd and $p\equiv 3~(mod~4)$, it can be proved similarly as above, we omit the details here.
\end{proof}

\section{Gbent functions from $\Z_{p^l}^n$ to $\Z_{p^k}$}
\label{sec3.2}
In this part, we consider generalized Boolean functions from  $\Z_{p^l}^n$ to $\Z_{p^k}$, where $l, ~k$ are positive integers.
The generalized Walsh-Hadamard transform of such a function $f$ can be defined by
\begin{eqnarray*}
\mathcal{H}_f(\mathbf{u})=p^{ -\frac{ln}{2} } \sum \limits_{\mathbf{x} \in \Z_{p^l}^n} \zeta_{p^l}^{-\mathbf{u}\cdot \mathbf{x}} \zeta_{p^k}^{f(\mathbf{x})}.
\end{eqnarray*}
$f$ is called a gbent function if $|\mathcal{H}_f(\mathbf{u})|=1$ for all $\mathbf{u}$.

If $l \geq k$, $$\mathcal{H}_f(\mathbf{u})=p^{ -\frac{ln}{2} } \sum_{\mathbf{x} \in \Z_{p^l}^n} \zeta_{p^l}^{-\mathbf{u}\cdot \mathbf{x}} \zeta_{p^k}^{f(\mathbf{x})}=p^{ -\frac{ln}{2} } \sum_{\mathbf{x} \in \Z_{p^l}^n} \zeta_{p^l}^{-\mathbf{u}\cdot \mathbf{x}} \zeta_{p^l}^{p^{l-k}f(\mathbf{x})},$$ then $f$ is gbent
from $\Z_{p^l}^n $ to $ \Z_{p^k}$  if and only if $p^{l-k}f(\mathbf{x})$ is gbent from $\Z_{p^l}^n $ to $ \Z_{p^l}$. Such gbent functions were introduced in \cite{Kumar} and were studied  a lot in the literature.  In the following, we only consider   generalized bent functions
from $\Z_{p^l}^n $ to $ \Z_{p^k}$ with $l <k$.

\begin{Lemma}
\label{Lemma3.2.01}
Let $l<k$ and $f(\mathbf{x})=\sum_{i=0}^{k-1}f_i(\mathbf{x})p^{i}$, where $f : \Z_{p^l}^n \rightarrow \Z_{p^k}$ and $f_i: \Z_{p^l}^n \rightarrow \Z_{p}$.
Then
\begin{equation*}
\mathcal{H}_f(\textbf{u})=\frac{1}{p^{k-l}} \sum \limits_{\mathbf{c} \in \Z_p^{k-l}}     \mathcal{H}_{ \sum_{i=k-l}^{k-1}f_ip^{i-(k-l)}+p^{l-1} \left({\sum_{i=0}^{k-l-1} c_if_i}\right)} (\mathbf{u}) \gamma_{\mathbf{c}},
\end{equation*}
where $\gamma_{\textbf{c}}=\sum_{\textbf{d}\in \Z_{p}^{k-l}} \zeta_p^{- \textbf{c}\cdot \textbf{d}} \zeta_{p^k}^{\sum_{i=0}^{k-l-1}d_ip^i}$.
\end{Lemma}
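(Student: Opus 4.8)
The plan is to mimic the digit-splitting expansion already used in Lemma~\ref{Lemma5.1.1}(i) and Lemma~\ref{Lemma3.1.02}, the only new ingredient being the interplay between the two different moduli $p^l$ and $p^k$ appearing in the transform. First I would partition the base-$p$ digits of $f=\sum_{i=0}^{k-1}f_ip^{i}$ into a ``high'' block $i\in\{k-l,\dots,k-1\}$ and a ``low'' block $i\in\{0,\dots,k-l-1\}$, and write $\zeta_{p^k}^{f(\mathbf{x})}=\zeta_{p^k}^{\sum_{i=k-l}^{k-1}f_i p^{i}}\cdot\zeta_{p^k}^{\sum_{i=0}^{k-l-1}f_i p^{i}}$. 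For the high block the key observation is that $\zeta_{p^k}^{p^{k-l}}=\zeta_{p^l}$, so that $\zeta_{p^k}^{\sum_{i=k-l}^{k-1}f_i p^{i}}=\zeta_{p^l}^{\sum_{i=k-l}^{k-1}f_i p^{i-(k-l)}}$, which is precisely the high part of the exponent $g_{\mathbf{c}}$ appearing in the statement.

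Next I would expand each factor of the low block by applying Lemma~\ref{Lemma2.4} with $t=1$, exactly as in the proof of Lemma~\ref{Lemma5.1.1}(i): for each $i$,
\[
\zeta_{p^k}^{f_i(\mathbf{x})p^{i}}=\frac{1}{p}\sum_{d_i\in\Z_p}\Bigl(\sum_{c_i\in\Z_p}\zeta_p^{(f_i(\mathbf{x})-d_i)c_i}\Bigr)\zeta_{p^k}^{p^{i}d_i}.
\]
Taking the product over $i=0,\dots,k-l-1$ and interchanging the order of summation collects a global factor $\zeta_p^{\sum_{i=0}^{k-l-1}c_if_i(\mathbf{x})}$ together with the product $\prod_{i=0}^{k-l-1}\bigl(\sum_{d_i\in\Z_p}\zeta_p^{-c_id_i}\zeta_{p^k}^{p^{i}d_i}\bigr)$, and by Lemma~\ref{Lemma2.7} this product is exactly $\gamma_{\mathbf{c}}$. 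Using once more that $\zeta_p=\zeta_{p^l}^{p^{l-1}}$, I would fold the low-block character into the $p^l$-th root, so that $\zeta_p^{\sum_{i=0}^{k-l-1}c_if_i(\mathbf{x})}=\zeta_{p^l}^{p^{l-1}\sum_{i=0}^{k-l-1}c_if_i(\mathbf{x})}$. Combining with the high block yields the clean intermediate identity
\[
\zeta_{p^k}^{f(\mathbf{x})}=\frac{1}{p^{k-l}}\sum_{\mathbf{c}\in\Z_p^{k-l}}\zeta_{p^l}^{g_{\mathbf{c}}(\mathbf{x})}\gamma_{\mathbf{c}},\qquad g_{\mathbf{c}}=\sum_{i=k-l}^{k-1}f_ip^{i-(k-l)}+p^{l-1}\Bigl(\sum_{i=0}^{k-l-1}c_if_i\Bigr).
\]

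Finally I would substitute this expansion into $\mathcal{H}_f(\mathbf{u})=p^{-ln/2}\sum_{\mathbf{x}\in\Z_{p^l}^n}\zeta_{p^l}^{-\mathbf{u}\cdot\mathbf{x}}\zeta_{p^k}^{f(\mathbf{x})}$, pull the $\mathbf{c}$-sum and the constant $\gamma_{\mathbf{c}}$ outside the sum over $\mathbf{x}$, and recognize the remaining inner sum $p^{-ln/2}\sum_{\mathbf{x}}\zeta_{p^l}^{g_{\mathbf{c}}(\mathbf{x})-\mathbf{u}\cdot\mathbf{x}}$ as $\mathcal{H}_{g_{\mathbf{c}}}(\mathbf{u})$, giving the asserted formula. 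I do not expect any serious obstacle here: the argument is a direct computation with no inequalities or structural case analysis, and it is essentially the $\Z_{p^l}$-domain analogue of Lemma~\ref{Lemma5.1.1}. The only point requiring genuine care is the bookkeeping of the two moduli, namely keeping the two root-of-unity reductions $\zeta_{p^k}^{p^{k-l}}=\zeta_{p^l}$ and $\zeta_{p^l}^{p^{l-1}}=\zeta_p$ straight and making sure the index shift $i\mapsto i-(k-l)$ is applied consistently so that $g_{\mathbf{c}}$ lands in the intended range as a $\Z_{p^l}$-valued function on $\Z_{p^l}^n$.
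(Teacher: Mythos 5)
Your proposal is correct and follows essentially the same route as the paper's proof: split the digits at $i=k-l$, fold the high block into $\zeta_{p^l}$ via $\zeta_{p^k}^{p^{k-l}}=\zeta_{p^l}$, expand the low block with Lemma~\ref{Lemma2.4} ($t=1$), swap summations, identify the product over $i$ as $\gamma_{\mathbf{c}}$ via Lemma~\ref{Lemma2.7}, and absorb $\zeta_p=\zeta_{p^l}^{p^{l-1}}$ into the exponent. The only (cosmetic) difference is that you first establish the pointwise expansion of $\zeta_{p^k}^{f(\mathbf{x})}$ and then insert it into the transform, whereas the paper carries out the same manipulation directly under the sum over $\mathbf{x}$.
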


\begin{proof}
According to the definition of $\mathcal{H}_f({\textbf{u}})$, we have
\begin{eqnarray*}
p^{\frac{nl}{2}}\mathcal{H}_f({\textbf{u}})&=&\sum \limits_{\textbf{x}\in \Z_{p^l}^{n}} \zeta_{p^l}^{- \textbf{u}\cdot \textbf{x}} \zeta_{p^k}^{f(\mathbf{x})}\\
&=&\sum \limits_{\textbf{x}\in \Z_{p^l}^{n}} \zeta_{p^l}^{- \textbf{u}\cdot \textbf{x}} \zeta_{p^k}^{\sum_{i=0}^{k-1}f_i(\mathbf{x})p^{i}}\\
&=&\sum \limits_{\textbf{x}\in \Z_{p^l}^{n}} \zeta_{p^l}^{- \textbf{u}\cdot \textbf{x}} \zeta_{p^k}^{\sum_{i=k-l}^{k-1}f_i(x)p^{i}}    \prod \limits_{i=0}^{k-l-1} \zeta_{p^{k}}^{f_i(\mathbf{x})p^i}\\
&=&\sum \limits_{\textbf{x}\in \Z_{p^l}^{n}} \zeta_{p^l}^{- \textbf{u}\cdot \textbf{x} + \sum_{i=k-l}^{k-1}f_i(\mathbf{x})p^{i-(k-l)}}  \prod \limits_{i=0}^{k-l-1} \zeta_{p^{k}}^{f_i(\mathbf{x})p^i}\\
&=&\sum \limits_{\textbf{x}\in \Z_{p^l}^{n}} \zeta_{p^l}^{- \textbf{u}\cdot \textbf{x} + \sum_{i=k-l}^{k-1}f_i(\mathbf{x})p^{i-(k-l)}} \prod \limits_{i=0}^{k-l-1}
\left(\frac{1}{p}\sum \limits_{d_i \in \Z_p} \left(\sum \limits_{c_i \in \Z_p} \zeta_p^{(f_i(\mathbf{x})-d_i)c_i} \right) \zeta_{p^{k}}^{p^id_i}\right) \\
&=&\frac{1}{p^{k-l}}\sum \limits_{\textbf{x}\in \Z_{p^l}^{n}} \zeta_{p^l}^{- \textbf{u}\cdot \textbf{x} + \sum_{i=k-l}^{k-1}f_i(\mathbf{x})p^{i-(k-l)}} \prod \limits_{i=0}^{k-l-1}
\left(\sum \limits_{c_i \in \Z_p} \left(\sum \limits_{d_i \in \Z_p} \zeta_p^{-c_i d_i} \zeta_{p^{k}}^{p^id_i} \right) \zeta_p^{c_if_i(\mathbf{x})}\right) \\
&=&\frac{1}{p^{k-l}}\sum \limits_{\textbf{x}\in \Z_{p^l}^{n}} \zeta_{p^l}^{- \textbf{u}\cdot \textbf{x} + \sum_{i=k-l}^{k-1}f_i(\mathbf{x})p^{i-(k-l)}} \sum \limits_{\mathbf{c} \in \Z_p^{k-l}}
\zeta_p^{\sum_{i=0}^{k-l-1} c_if_i(\mathbf{x})} \prod \limits_{i=0}^{k-l-1} \left(\sum \limits_{d_i \in \Z_p} \zeta_p^{-c_i d_i} \zeta_{p^{k}}^{p^id_i} \right) \\
&=&\frac{1}{p^{k-l}} \sum \limits_{\mathbf{c} \in \Z_p^{k-l}}  \sum \limits_{\textbf{x}\in \Z_{p^l}^{n}} \zeta_{p^l}^{- \textbf{u}\cdot \textbf{x} + \sum_{i=k-l}^{k-1}f_i(\mathbf{x})p^{i-(k-l)}+p^{l-1}
\left({\sum_{i=0}^{k-l-1} c_if_i(\mathbf{x})}\right)}\prod \limits_{i=0}^{k-l-1} \left(\sum \limits_{d_i \in \Z_p} \zeta_p^{-c_i d_i} \zeta_{p^{k}}^{p^id_i} \right) \\
&=&\frac{p^{\frac{nt}{2}}}{p^{k-l}} \sum \limits_{\mathbf{c} \in \Z_p^{k-l}}     \mathcal{H}_{ \sum_{i=k-l}^{k-1}f_ip^{i-(k-l)}+p^{l-1} \left({\sum_{i=0}^{k-l-1} c_if_i}\right)} (\mathbf{u}) \gamma_{\mathbf{c}}.
\end{eqnarray*}
The last equality holds from Remark \ref{Remark2.5}. 
This completes the proof.
\end{proof}

\begin{thm}
\label{thm3.2.01}
Let $l<k$ and $f(\mathbf{x})=\sum_{i=0}^{k-1}f_i(\mathbf{x})p^{i}$, where $f : \Z_{p^l}^n \rightarrow \Z_{p^k}$ and $f_i: \Z_{p^l}^n \rightarrow \Z_{p}$.
Then
$f$ is gbent if and only if for any $\mathbf{u} \in \Z_p^n$ and $\mathbf{c }\in \Z_{p}^{k-l}$, there exists some $\mathbf{d }\in \Z_{p}^{k-l}$ and $j \in \Z_{p^{l}}$ such that
\begin{equation*}
\begin{split}
&\mathcal{H}_{ \sum_{i=k-l}^{k-1}f_ip^{i-(k-l)}+p^{l-1} \left({\sum_{i=0}^{k-l-1} c_if_i}\right) }(\mathbf{u})\\
&=\left\{
\begin{array}{ll}
\pm  \zeta_{p^l}^j\zeta_{p}^{\mathbf{c} \cdot \mathbf{d}} &\text{if}~n~\text{is~even~or}~n~\text{is~odd~and~}p \equiv 1(mod~4),\\
\pm \sqrt{-1} \zeta_{p^l}^j \zeta_{p}^{\mathbf{c} \cdot \mathbf{d}}&
\text{if}~n~\text{is~odd~and}~p \equiv 1(mod~4),
\end{array}
\right.
\end{split}
\end{equation*}
where $ \sum_{i=k-l}^{k-1}f_ip^{i-(k-l)}+p^{l-1} \left({\sum_{i=0}^{k-l-1} c_if_i}\right) \in \mathcal{GB}_n^{p^{l}}$ for every $\mathbf{c} \in \Z_{p^t}^{l-1}$,
and $\mathbf{d}$ and $j$ only depend on $\mathbf{u}$ and $f$.
\end{thm}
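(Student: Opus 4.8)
The plan is to mirror the proof of Theorem \ref{thm3.1.03}, using Lemma \ref{Lemma3.2.01} in place of Lemma \ref{Lemma3.1.02}. Throughout, write $g_{\mathbf{c}}=\sum_{i=k-l}^{k-1}f_ip^{i-(k-l)}+p^{l-1}\left(\sum_{i=0}^{k-l-1}c_if_i\right)$, a function from $\Z_{p^l}^n$ to $\Z_{p^l}$, so that Lemma \ref{Lemma3.2.01} reads $\mathcal{H}_f(\mathbf{u})=\frac{1}{p^{k-l}}\sum_{\mathbf{c}\in\Z_p^{k-l}}\mathcal{H}_{g_{\mathbf{c}}}(\mathbf{u})\gamma_{\mathbf{c}}$. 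The first preparatory step is to record the analog of Lemma \ref{Lemma2.3} in the present domain: a gbent $f:\Z_{p^l}^n\to\Z_{p^k}$ satisfies $\mathcal{H}_f(\mathbf{u})=\pm\zeta_{p^k}^i$, respectively $\pm\sqrt{-1}\zeta_{p^k}^i$, according to the parity of $ln$ (which governs whether $p^{ln/2}$ is rational) and the residue of $p$ modulo $4$. This follows from the same cyclotomic argument behind Lemma \ref{Lemma2.3}: the scaled value $\gamma=p^{ln/2}\mathcal{H}_f(\mathbf{u})=\sum_{\mathbf{x}}\zeta_{p^l}^{-\mathbf{u}\cdot\mathbf{x}}\zeta_{p^k}^{f(\mathbf{x})}$ lies in $\mathcal{O}_K=\Z[\zeta_{p^k}]$ and satisfies $\gamma\bar\gamma=p^{ln}$. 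Since $p$ is totally ramified (Lemma \ref{Lemma2.1}(iii)) and $Gal(K/\Q)$ is abelian, hence commutes with complex conjugation, every Galois conjugate of $\gamma$ again has absolute value $p^{ln/2}$; so $\gamma$ is $\sqrt{p^{ln}}$ times a root of unity of $\mathcal{O}_K$ (Lemma \ref{Lemma2.1}(ii)) by Kronecker's theorem, and the $\sqrt{-1}$ in the odd case is forced by Lemma \ref{Lemma2.2}(ii) exactly as for Lemma \ref{Lemma2.3}.

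Granting this, the forward implication proceeds as in Theorem \ref{thm3.1.03}. Writing $i=e+p^{k-l}j$ with $0\le e\le p^{k-l}-1$ and $0\le j\le p^l-1$, one gets $\mathcal{H}_f(\mathbf{u})=\pm\zeta_{p^l}^j\zeta_{p^k}^e$ (or with an extra $\sqrt{-1}$), and putting $e=\sum_{i=0}^{k-l-1}d_ip^i$ with $\mathbf{d}\in\Z_p^{k-l}$, Lemma \ref{Lemma2.6} (with $t=1$) gives $\zeta_{p^k}^e=\frac{1}{p^{k-l}}\sum_{\mathbf{c}}\zeta_p^{\mathbf{c}\cdot\mathbf{d}}\gamma_{\mathbf{c}}$. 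Comparing this with the expansion from Lemma \ref{Lemma3.2.01} yields $\frac{1}{p^{k-l}}\sum_{\mathbf{c}}\mathcal{H}_{g_{\mathbf{c}}}(\mathbf{u})\gamma_{\mathbf{c}}=\pm\frac{1}{p^{k-l}}\zeta_{p^l}^j\sum_{\mathbf{c}}\zeta_p^{\mathbf{c}\cdot\mathbf{d}}\gamma_{\mathbf{c}}$. Both $\mathcal{H}_{g_{\mathbf{c}}}(\mathbf{u})$ and $\zeta_{p^l}^j\zeta_p^{\mathbf{c}\cdot\mathbf{d}}$ lie in $\Q(\zeta_{p^l})$ in the even case (and in $\Q(\zeta_{p^l},\sqrt{-1})$ when $p\equiv3\pmod4$ with $ln$ odd, by Lemma \ref{Lemma2.2}(ii)), while $\{\gamma_{\mathbf{c}}\mid\mathbf{c}\in\Z_p^{k-l}\}$ is a basis of $\Q(\zeta_{p^k})$ over $\Q(\zeta_{p^l})$; matching coefficients then gives $\mathcal{H}_{g_{\mathbf{c}}}(\mathbf{u})=\pm\zeta_{p^l}^j\zeta_p^{\mathbf{c}\cdot\mathbf{d}}$ for every $\mathbf{c}$. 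The converse is obtained by reversing this computation: substituting the assumed form of $\mathcal{H}_{g_{\mathbf{c}}}(\mathbf{u})$ into Lemma \ref{Lemma3.2.01} and resumming via Lemma \ref{Lemma2.6} recovers $\mathcal{H}_f(\mathbf{u})=\pm\zeta_{p^l}^j\zeta_{p^k}^e$, which has absolute value $1$, so $f$ is gbent.

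The step I expect to be the main obstacle, and the one genuinely new point beyond Theorem \ref{thm3.1.03}, is the basis claim over $\Q(\zeta_{p^l})$. Remark \ref{Remark2.4} does not apply verbatim here, because in the present $\gamma_{\mathbf{c}}$ the coefficients use $\zeta_p$ rather than $\zeta_{p^l}$, so the literal statement of that remark would only give a basis over $\Q(\zeta_p)$, which has the wrong degree unless $l=1$. Instead I would argue directly: by Lemma \ref{Lemma2.5}(i) with $t=l$, the set $\{1,\zeta_{p^k},\dots,\zeta_{p^k}^{p^{k-l}-1}\}$ is a basis of $\Q(\zeta_{p^k})$ over $\Q(\zeta_{p^l})$, and the matrix $(\zeta_p^{-\mathbf{c}\cdot\mathbf{d}})_{\mathbf{c},\mathbf{d}\in\Z_p^{k-l}}$ relating it to $\{\gamma_{\mathbf{c}}\}$ is the $(k-l)$-fold tensor power of the $p\times p$ Fourier matrix, hence invertible over $\Q(\zeta_p)\subseteq\Q(\zeta_{p^l})$; therefore $\{\gamma_{\mathbf{c}}\}$ is also a basis over $\Q(\zeta_{p^l})$. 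The case $p\equiv3\pmod4$ with $ln$ odd is handled identically, using Lemma \ref{Lemma2.5}(ii) in place of (i) so that the coefficient comparison takes place over $\Q(\zeta_{p^l},\sqrt{-1})$.
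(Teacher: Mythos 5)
Your proposal is correct and follows essentially the same route as the paper: the paper's proof likewise expands $\mathcal{H}_f$ via Lemma \ref{Lemma3.2.01}, writes $\mathcal{H}_f(\mathbf{u})=\pm\zeta_{p^l}^{j}\zeta_{p^k}^{d}$ (resp.\ with an extra $\sqrt{-1}$), re-expands $\zeta_{p^k}^{d}$ by Lemma \ref{Lemma2.6}, and matches coefficients against $\{\gamma_{\mathbf{c}}\}$ as a basis over $\Q(\zeta_{p^l})$ (resp.\ $\Q(\zeta_{p^l},\sqrt{-1})$). The two places where you go beyond the paper are exactly where its citations are loose: it invokes Lemma \ref{Lemma2.3} verbatim although that lemma is stated only for the domain $\Z_p^n$ (your version, with the parity of $ln$ governing the form of $\mathcal{H}_f(\mathbf{u})$ and proved by the same ramification/Kronecker argument, is the correct statement), and it cites Remark \ref{Remark2.4} for the basis claim although, as you observe, that remark's $\gamma$'s carry $\zeta_{p^t}$-coefficients with $k=lt$ and so does not apply verbatim here; your direct argument via the invertible Fourier-type transition matrix with entries in $\Q(\zeta_p)\subseteq\Q(\zeta_{p^l})$, combined with Lemma \ref{Lemma2.5}, fills that gap correctly.
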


\begin{proof}
If $n$ is even or $n$ is odd and $p\equiv 1~(mod~4)$, then $\mathcal{H}_f{(\textbf{u})}= \pm \zeta_{p^k}^i$, for some $0 \leq i \leq p^k-1$,
by Lemma \ref{Lemma2.3}.
Hence, $\mathcal{H}_f{(\textbf{u})}$ can be expressed as $\mathcal{H}_f{(\textbf{u})}= \pm \zeta_{p^{l}}^j \zeta_{p^k}^{i-jp^{k-l}}$,
where $0 \leq j \leq p^{l}-1$, and let $d=i-jp^{k-l}$, $0 \leq d \leq p^{k-l}-1$. Further, $d$ can be expressed by
$d=\sum_{j=0}^{k-l-1}p^{j}d_j$. We denote $\mathbf{d}=(d_0,\cdots, d_{k-l-1}) \in \Z_{p}^{k-l}$.

According to Lemma \ref{Lemma3.2.01}, we have
\begin{eqnarray}
\label{3.2.01}
\mathcal{H}_f(\textbf{u})=\frac{1}{p^{k-l}} \sum \limits_{\mathbf{c} \in \Z_p^{k-l}}  \mathcal{H}_{ \sum_{i=k-l}^{k-1}f_ip^{i-(k-l)}+p^{l-1} \left({\sum_{i=0}^{k-l-1} c_if_i}\right)} (\mathbf{u}) \gamma_{\mathbf{c}},
\end{eqnarray}
where $\gamma_{\textbf{c}}=\sum_{\textbf{d}\in \Z_{p}^{k-l}} \zeta_p^{- \textbf{c}\cdot \textbf{d}} \zeta_{p^k}^{\sum_{i=0}^{k-l-1}d_ip^i}$.
By  Lemma \ref{Lemma2.6},  we have
\[\zeta_{p^k}^{d}=\frac{1}{p^{k-l}} \sum \limits_{c\in \Z_{p}^{k-l}} \zeta_{p}^{\mathbf{c} \cdot \mathbf{d}} \gamma_{\mathbf{c}}.\]
Therefore,
\begin{eqnarray}
\label{3.2.02}
\begin{split}
\mathcal{H}_f{(\textbf{u})}&= \pm \zeta_{p^{l}}^j \zeta_{p^k}^{i-jp^{k-l}}=\pm \zeta_{p^{l}}^j \zeta_{p^k}^{d}\\
&= \pm \frac{1}{p^{k-l}}\zeta_{p^{l}}^j \sum \limits_{c\in \Z_{p}^{k-l}} \zeta_{p}^{\mathbf{c} \cdot \mathbf{d}} \gamma_{\mathbf{c}}.
\end{split}
\end{eqnarray}
Combining  (\ref{3.2.01}) with (\ref{3.2.02}),  we can get
\begin{eqnarray}
\label{3.2.03}
\begin{split}
\mathcal{H}_f(\textbf{u})&=\frac{1}{p^{k-l}} \sum \limits_{\mathbf{c} \in \Z_p^{k-l}}  \mathcal{H}_{ \sum_{i=k-l}^{k-1}f_ip^{i-(k-l)}+p^{l-1} \left({\sum_{i=0}^{k-l-1} c_if_i}\right)} (\mathbf{u}) \gamma_{\mathbf{c}}\\
&=\pm \frac{1}{p^{k-l}}\zeta_{p^{l}}^j \sum \limits_{c\in \Z_{p}^{k-l}} \zeta_{p}^{\mathbf{c} \cdot \mathbf{d}} \gamma_{\mathbf{c}}.
\end{split}
\end{eqnarray}
Since $ \sum_{i=k-l}^{k-1}f_ip^{i-(k-l)}+p^{l-1} \left({\sum_{i=0}^{k-l-1} c_if_i}\right) \in \mathcal{GB}_n^{p^{l}}$, it is absolutely that
$\mathcal{H}_{\sum_{i=k-l}^{k-1}f_ip^{i-(k-l)}+p^{l-1} \left({\sum_{i=0}^{k-l-1} c_if_i}\right)}(\mathbf{u}) \in \Q(\zeta_{p^{l}})$
in the case of $n$ is even or $n$  is odd and $p\equiv 1~(mod~4)$. We note that $\{1,\zeta_{p^k},\cdots, \zeta_{p^k}^{p^{k-l}-1}\}$
is a basis of $\Q(\zeta_{p^k})$ over $\Q(\zeta_{p^{l}})$ by  Lemma \ref{Lemma2.5} (i). So, $\{\gamma_{\mathbf{c}}|c\in \Z_{p}^{k-l}\}$
is also a basis of $\Q(\zeta_{p^k})$ over $\Q(\zeta_{p^{l}})$ by  Remark \ref{Remark2.4}.

Based on the above mentioned discussion and (\ref{3.2.03}), we can get
\[\mathcal{H}_{\sum_{i=k-l}^{k-1}f_ip^{i-(k-l)}+p^{l-1} \left({\sum_{i=0}^{k-l-1} c_if_i}\right)}(\mathbf{u})= \pm  \zeta_{p^l}^j \zeta_{p}^{\mathbf{c} \cdot \mathbf{d}}.\]

If $n$ is odd and $p\equiv 3~(mod~4)$, then $\mathcal{H}_f{(\textbf{u})}= \pm \sqrt{-1}\zeta_{p^k}^i$, for some $0 \leq i \leq p^k-1$,
by Lemma \ref{Lemma2.3}. Note that in this case, by the Lemma \ref{Lemma2.2} (ii), we have
\[\mathcal{H}_{\sum_{i=k-l}^{k-1}f_ip^{i-(k-l)}+p^{l-1} \left({\sum_{i=0}^{k-l-1} c_if_i}\right)}(\mathbf{u}) \in \Q(\zeta_{4p^{l}}) \setminus \Q(\zeta_{2p^{l}})  \subseteq \Q(\zeta_{p^{l}},\sqrt{-1}).\]
Similar as before,  we can get
\[\mathcal{H}_{\sum_{i=k-l}^{k-1}f_ip^{i-(k-l)}+p^{l-1} \left({\sum_{i=0}^{k-l-1} c_if_i}\right)}(\mathbf{u})= \pm \sqrt{-1} \zeta_{p^l}^j \zeta_{p}^{\mathbf{c} \cdot \mathbf{d}}.\]

This completes the proof.
\end{proof}

\begin{Remark}
Theorem \ref{thm3.2.01} coincides with \cite[Theorem 3.2]{Wang} by setting $l=1$.
\end{Remark}

\section{Conclusions}
In this paper, we further investigate properties of generalized bent Boolean functions from $\Z_{p}^n$ to $\Z_{p^k}$. For various kinds of representations,  sufficient and necessary conditions for bent-ness of such functions are given in terms of their various kinds of component functions. Furthermore, $\Z_{p^k}$-bent functions and their relations  to relative difference sets are studied. It turns out that $\Z_{p^k}$-bent functions correspond
  to a class of vectorial bent functions, and  the property of being $\Z_{p^k}$-bent is much stronger then the standard bent-ness.  The dual and the generalized Gray  image of gbent function are also discussed. As a further generalization, we also define and give characterizations of gbent functions from $\Z_{p^l}^n$ to $\Z_{p^k}$ for a positive integer $l$ with $l<k$.



\begin{thebibliography}{100}\addtolength{\itemsep}{-1.5ex}
\bibitem{Carlet1}  C. Carlet, $\Z_{2^k}$-linear Codes,  IEEE Trans. Inf. Theory, Vol. 44, no. 4, pp. 1543-1547, 1998.
\bibitem{Cesmelioglu1} A. \c{C}e\c{s}melio\v{g}lu£¬W. Meidl, A. Pott, There are infinitely many bent functions for which the dual is not bent, arXiv:1511.02505.
\bibitem{Heng}  Z. Heng, Q. Yue, Generalized Gray map and a class of $p$-ary nonlinear codes, Finite Fields Appl. vol. 36, pp. 36¨C51, 2015.
\bibitem{Hodzic} S. Hod\v{z}i\'{u}, E. Pasalic, Generalized bent functions - sufficient conditions and related constructions, arXiv:1601.08084.
\bibitem{Hodzic1}  S. Hod\v{z}i\'{u}, W. Meidl, E. Pasalic, Full characterization of generalized bent functions as (semi)-bent spaces, their dual, and the Gray image, arXiv:1605.05713.
\bibitem{Kumar} P.V. Kumar, R.A. Scholtz, L.R. Welch, Generalized bent functions and their properties, J. Combinat. Theory, Ser. A, vol. 40, no. 1, pp. 90-107, 1985.
\bibitem{liuhaiying} H. Liu, K. Feng, R. Feng, Nonexistence of Generalized Bent Functions From $\Z_{2}^{n}$ to $\Z_{m}$, Designs, arXiv:1507.05751.
\bibitem{Martinsen1} T. Martinsen, W. Meidl, P. St\u{a}nic\u{a}, Generalized bent functions and their Gray images, arXiv:1511.01438.
\bibitem{Martinsen2} T. Martinsen, W. Meidl, P. St\u{a}nic\u{a}, Partial Spread and Vectorial Generalized Bent Functions, arXiv:1511.01705.
\bibitem{Martinsen} T. Martinsen, W. Meidl, S. Mesnager, P. St\u{a}nic\u{a}, Decomposing generalized bent and hyperbent functions, arXiv:1604.02830.
\bibitem{Schmidt} K.U. Schmidt, Quaternary Constant-Amplitude Codes for Multicode CDMA, IEEE Trans. Inf. Theory, Vol. 55, no. 4, pp. 1824-1832, 2009.
\bibitem{Stanica} P. St\u{a}nic\u{a}, T. Martinsen, S. Gangopadhyay , B.K. Singh, Bent and generalized bent Boolean functions, Des. Codes Cryptogr., Vol. 69, no. 1, pp. 77-94, 2013.
\bibitem{Tan} Y. Tan, A. Pott, T. Feng, Strongly regular graphs associated with ternary bent functions, J. Combinat. Theory, Ser. A, vol. 117, no. 6, pp. 668-682, 2010.
\bibitem{Tang} C. Tang, C. Xiang, Y. Qi, K. Feng, Complete characterization of generalized bent and $2^k$-bent Boolean functions, http://eprint.iacr.org/2016/335.
\bibitem{Wang}   L. Wang, B. Wu, Z. Liu, $\Z_q$-valued generalized bent functions in odd characteristics, arXiv:1605.02275.
\bibitem{Washington} L.C. Washington, Introduction to Cyclotomic Fields, GTM 83, Springer, New York (1997).
\end{thebibliography}
\end{document}